\documentclass[reqno]{amsart}
\usepackage{mathrsfs}
\usepackage{amsmath}
\usepackage{amsthm}
\usepackage{amsfonts}
\usepackage{amssymb}
\usepackage{url}
\usepackage{enumerate}
\usepackage[pdftex,bookmarks=true]{hyperref}

\newcommand\R{{\mathbf{R}}}

\renewcommand\P{{\mathbf{P}}}
\newcommand\E{{\mathbf{E}}}

\newcommand\dist{{\operatorname{dist}}}

\newcommand\col{{\mathbf{c}}}
\newcommand\row{{\mathbf{r}}}


\newcommand\ep{\varepsilon}

\newcommand\Bc{{\mathbf c}}

\newcommand\Bu{{\mathbf u}}
\newcommand\Bv{{\mathbf v}}

\newcommand\Bx{{\mathbf x}}
\newcommand\By{{\mathbf y}}
\newcommand\Bz{{\mathbf z}}

%


\newcommand\CE{{\mathcal E}}
\newcommand\CF{{\mathcal F}}
\newcommand\CG{{\mathcal G}}
\newcommand\CH{{\mathcal H}}




\newcommand\Comp{\mathbf{Comp}}
\newcommand\Incomp{\mathbf{Incomp}}
\newcommand\supp{\mathbf{supp}}

\newcommand\eps{\varepsilon}



\parindent = 0 pt
\parskip = 12 pt

\textwidth=6.5in
\oddsidemargin=0in
\evensidemargin=0in

\theoremstyle{plain}
  \newtheorem{theorem}[subsection]{Theorem}

  \newtheorem{fact}[subsection]{Fact}
  \newtheorem{lemma}[subsection]{Lemma}
  \newtheorem{corollary}[subsection]{Corollary}

  \newtheorem{remark}[subsection]{Remark}
  
  \newtheorem{claim}[subsection]{Claim}

\theoremstyle{definition}
  \newtheorem{definition}[subsection]{Definition}

\begin{document}
\title{Random matrices: overcrowding estimates for the spectrum}

\author{Hoi H. Nguyen}
\email{nguyen.1261@math.osu.edu}
\address{Department of Mathematics, The Ohio State University, 231 West 18th Avenue, Columbus, OH 43210}

\thanks{The author is supported by research grant DMS-1600782}




\begin{abstract} We address overcrowding estimates for the singular values of random iid matrices, as well as for the eigenvalues of random Wigner matrices. We show evidence of long range separation under arbitrary perturbation even in matrices of discrete entry distributions. In many cases our method yields nearly optimal bounds.

\end{abstract}

\maketitle

\section{introduction}

\subsection{Random iid matrices with subgaussian tails}

Consider a random matrix $M=(m_{ij})_{1\le i,j\le n}$, where $m_{ij}$ are iid copies of a random variable $\xi$ of mean zero and variance one. Let $\sigma_n \le \dots \le \sigma_1$ be the singular values of $M$. 
 
An important problem with practical applications is to bound the condition number
of $M$.  As the asymptotic behavior of the largest singular value $\sigma_1$ is well understood under natural assumption on $\xi$,  the main problem is to study the lower bound of the least singular value $\sigma_n$.  This problem was  first raised by
Goldstine and von Neumann \cite{GN} well back in the 1940s, with connection to their investigation
of the complexity of inverting a matrix.

 To answer Goldstine and von Neumman's question, Edelman \cite{Edelman}  computed the distribution of the least singular value of Ginibre matrix (where $\xi$ is standard gaussian). He showed that for all fixed  $\ep >0$
$$ \P( \sigma_n  \leq \ep n^{-1/2}  ) = \int_0^{\ep^2}  \frac{1+\sqrt{x}}{2\sqrt{x}} e^{-(x/2 + \sqrt{x})}\ dx + o(1) =  \ep - \frac{1}{3} \ep^{3 }  +O(\ep^4) +o(1) . $$
Note that the same asymptotic continues to hold for any $ \ep>0$ which can go to zero with $n$ (see also \cite{ST})
\begin{equation}\label{eqn:Edelman}
\P (\sigma_n   \le \ep n^{-1/2} )  \le \ep.
\end{equation} 
For other singular values of Ginibre matrices, an elegant result by Szarek \cite{Szarek} shows that the $\sigma_{n-k+1}$ are separated away from zero with an extremely fast rate.

\begin{theorem}\label{theorem:Ginibre} Assume that $\xi$ is standard gaussian, then there exist absolute constants $C_1, C_2$ such that for all $\eps>0$, and all $1\le k\le n$
$$ (\frac{C_1}{k} \eps)^{k^2} \le \P( \sigma_{n-k+1} \le \frac{\eps}{\sqrt{n}}) \le (\frac{C_2}{k} \eps)^{k^2}.$$
\end{theorem}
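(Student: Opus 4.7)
The plan is to exploit the explicit joint density of the squared singular values of a real Ginibre matrix. Writing $s_i = \sigma_i^2$ for $i=1,\dots,n$ in decreasing order, the joint density is
\[
f(s_1,\dots,s_n) = \frac{1}{Z_n}\prod_{i<j}(s_i - s_j)\prod_i s_i^{-1/2} e^{-s_i/2}
\]
on the ordered positive cone. The event $\{\sigma_{n-k+1} \le \eps/\sqrt{n}\}$ is exactly $\{s_{n-k+1} \le \eps^2/n\}$, i.e., at least $k$ of the squared singular values lie in $[0,\eps^2/n]$, so one extracts both bounds by integrating $f$ over this region.

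For the upper bound, I would split the Vandermonde along the partition $\{1,\dots,n-k\}\sqcup\{n-k+1,\dots,n\}$ into ``bulk'' and ``small'' blocks. Under the favorable event that the bulk is typical (say $s_{n-k}$ bounded below by an absolute constant and $s_1 \le Cn$), the cross-Vandermonde $\prod_{i\le n-k<j}(s_i - s_j)$ can be replaced by $\prod_{i\le n-k} s_i^k$ up to a bounded multiplicative factor, and what remains after integrating out the small block is the Selberg-type integral
\[
\int_{[0,\eps^2/n]^k}\prod_{n-k<i<j\le n}(s_i - s_j)\prod_{i>n-k} s_i^{-1/2}\,ds.
\]
Scaling $s_i = (\eps^2/n)t_i$ reduces this to $(\eps^2/n)^{k^2/2}\cdot c_k$, where $c_k$ is a dimensionless Selberg constant that, by Stirling, decays like $k^{-\Theta(k^2)}$. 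Combining with the ratio of normalizations $Z_{n-k}/Z_n$, which contributes a compensating factor of $n^{k^2/2}$ times a further product of Gamma functions, yields the desired $(C_2\eps/k)^{k^2}$ upper bound.

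For the lower bound, I would restrict to the explicit subregion where the small squared singular values are well-separated in an arithmetic progression, e.g.\ $s_{n-k+j}\in\bigl[(j-1)\eps^2/(2kn),\,j\eps^2/(2kn)\bigr]$ for $j=1,\dots,k$, intersected with a fixed event for the bulk of absolute positive probability. On this region the Vandermonde among the small values is bounded below by $\prod_{i<j\le k}\bigl((j-i)\eps^2/(2kn)\bigr)$, each factor $s_i^{-1/2}$ exceeds $(\eps^2/kn)^{-1/2}$, and the volume of the small region is $(\eps^2/(2kn))^k$. Multiplying all these contributions and the bulk probability produces the matching $(C_1\eps/k)^{k^2}$ lower bound.

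The main obstacle is the careful bookkeeping of $k$-dependent constants: the Selberg integral constant and the normalization ratio $Z_n/Z_{n-k}$ each involve products of roughly $k^2$ Gamma function values, and the clean $k^{-k^2}$ scaling emerges only after Stirling's formula is applied to both sides so that the leading factors cancel to the correct order. A secondary technical point is justifying the approximation of the cross-term $\prod_{i\le n-k<j}(s_i - s_j)$ by $\prod_{i\le n-k}s_i^k$; this requires truncating the bulk to a high-probability event on which every bulk $s_i$ is bounded away from $\eps^2/n$, so that the replacement is valid up to bounded multiplicative factors.
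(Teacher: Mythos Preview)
The paper does not actually contain a proof of this theorem in the source provided: it is stated as a result of Szarek, and although the presentation section promises to ``recast the proof of Theorem~\ref{theorem:Ginibre}'' in an appendix, no such appendix appears in the manuscript. So there is nothing here to compare your proposal against directly.

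That said, your plan is the natural route for the Gaussian case and is essentially the standard one: exploit the explicit Laguerre-ensemble density and reduce to Selberg-type integrals after splitting the Vandermonde along the bulk/small partition. Two small corrections are worth flagging. First, for the \emph{upper} bound you do not need (and cannot simply use) conditioning on a favorable bulk event; since each small $s_j\ge 0$, the cross factors satisfy $(s_i-s_j)\le s_i$ unconditionally, so the cross-Vandermonde is dominated by $\prod_{i\le n-k}s_i^{k}$ everywhere. The remaining bulk integral is then not $Z_{n-k}$ but the shifted Laguerre normalization
\[
\int \prod_{i<j\le n-k}(s_i-s_j)\prod_{i\le n-k} s_i^{k-1/2}e^{-s_i/2}\,ds,
\]
which is again an explicit product of Gamma values; the factor $n^{k^2/2}$ you need arises from comparing \emph{that} constant with $Z_n$, not from $Z_{n-k}/Z_n$. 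Second, the ``favorable bulk'' conditioning is precisely what drives the \emph{lower} bound, guaranteeing that $s_{n-k}$ stays bounded away from the small block so the cross factors are bounded below; you already have it in the right place there. With these adjustments the Gamma-function bookkeeping you identify as the main obstacle goes through via Stirling exactly as you anticipate.
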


In what follows $\eps$ is always bounded by $O(1)$. Motivated by the universality phenomenon in random matrix theory, we expect similar repulsion bounds for general random matrix ensembles. More specifically, we will assume $\xi$ to have {\it mean zero, variance one}, and subgaussian distribution. In other words, there exists $B>0$ such that 
\begin{equation}\label{eqn:xi}
\P(|\xi|>t) \le 2 \exp(-t^2/B^2) \mbox{ for all } t.
\end{equation}
The smallest of such $B$ is called the subgaussian moment of $\xi$. A representative example of our study, on the opposite side of being gaussian, is the Bernoulli (Radamacher) random variable which takes value $\pm 1$ with probability 1/2.

When $k=1$, Rudelson and Vershynin \cite{RV} extended \eqref{eqn:Edelman} to more general iid random matrices.
\begin{theorem}\label{theorem:RV}
 Let $M=(m_{ij})_{1\le i,j\le n}$ be a random matrix where $m_{ij}$ are iid copies of a subgaussian random variable $\xi$  as in \eqref{eqn:xi}. Then for any $\eps>0$ that might depend on $n$ we have 
$$\P(\sigma_{n} \le \frac{\eps}{\sqrt{n}}) \le C  \eps + \exp(-cn),$$
where $C$ and $c$ depend only on the subgaussian moment of $\xi$.
\end{theorem}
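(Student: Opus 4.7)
The plan is to follow the Rudelson-Vershynin strategy of decomposing the unit sphere $S^{n-1}$ into \emph{compressible} vectors $\Comp(\delta, \rho)$ (those within Euclidean distance $\rho$ of a vector supported on at most $\delta n$ coordinates) and \emph{incompressible} vectors $\Incomp(\delta, \rho)$. For appropriately chosen constants $\delta, \rho \in (0,1)$ depending only on the subgaussian moment $B$, I would separately bound $\inf_{x \in \Comp} \|Mx\|$ and $\inf_{x \in \Incomp} \|Mx\|$ from below, and then combine them via $\sigma_n(M) = \inf_{x \in S^{n-1}} \|Mx\|$.

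The compressible part is the easier step: since $\Comp(\delta, \rho)$ admits an $\eps$-net of cardinality $\exp(C \delta n \log(e/\delta))$, since for any fixed unit vector $x$ the random vector $Mx$ has iid subgaussian coordinates of unit variance so that $\|Mx\| \ge c\sqrt{n}$ with probability $1 - \exp(-c'n)$, and since the operator norm of $M$ is at most $C'\sqrt{n}$ with probability $1 - \exp(-c''n)$, a union bound together with a standard approximation argument yields
\[
\Pr\bigl( \inf_{x \in \Comp} \|Mx\| \le c\sqrt{n} \bigr) \le \exp(-cn)
\]
provided $\delta, \rho$ are chosen sufficiently small (depending only on $B$).

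The incompressible part rests on the \emph{invertibility via distance} principle: denoting the columns of $M$ by $X_1, \dots, X_n$ and setting $H_k := \mathrm{span}\{X_j : j \ne k\}$, one has
\[
\inf_{x \in \Incomp(\delta, \rho)} \|Mx\| \ge \frac{c(\delta, \rho)}{\sqrt{n}} \min_{1 \le k \le n} \dist(X_k, H_k),
\]
as a consequence of the identity $\|Mx\| \ge |x_k| \dist(X_k, H_k)$ and the fact that incompressibility forces a constant fraction of coordinates $|x_k|$ to be $\gtrsim 1/\sqrt{n}$. Hence it suffices, by a union bound over $k$, to prove $\Pr( \dist(X_k, H_k) \le \eps) \le C \eps + \exp(-cn)$ for each $k$. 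Conditioning on the other columns and letting $v$ be a unit normal to $H_k$, we have $\dist(X_k, H_k) = |\langle v, X_k \rangle|$ with $X_k$ independent of $v$, so the task reduces to bounding the small-ball probability of $\sum_i v_i \xi_i$ in terms of the arithmetic structure of $v$.

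The main obstacle is showing that the random normal vector $v$ is itself incompressible with high probability and has \emph{large essential least common denominator} $\LCD(v)$. Once this is established, the inverse Littlewood-Offord theory of Rudelson-Vershynin yields
\[
\Pr_{X_k}\bigl( |\langle v, X_k \rangle| \le \eps \mid v \bigr) \le C \eps + \frac{C'}{\LCD(v)},
\]
and the lower bound $\LCD(v) \ge e^{cn}$ with probability $1 - \exp(-cn)$ gives the claim. Establishing this $\LCD$ bound is the delicate point: if $v$ had small LCD then $H_k$ would have to lie within a small neighborhood of a lattice-rational hyperplane, and the probability of this is controlled by applying the small-ball/net machinery to the $n-1$ iid columns spanning $H_k$, together with a careful level-set decomposition of the possible $\LCD$ values. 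Combining this with a union bound over $k$ and with the compressible estimate completes the proof.
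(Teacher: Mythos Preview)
The paper does not actually prove this statement; Theorem~\ref{theorem:RV} is quoted as a known result of Rudelson and Vershynin \cite{RV}, so there is no ``paper's own proof'' to compare against. Your outline is indeed the Rudelson--Vershynin strategy, and the compressible step and the LCD analysis of the random normal vector are described correctly. However, your reduction from the incompressible infimum to the column-distance problem has a quantitative gap.

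The displayed deterministic inequality
\[
\inf_{x \in \Incomp(\delta,\rho)} \|Mx\| \ge \frac{c(\delta,\rho)}{\sqrt{n}} \min_{1\le k\le n} \dist(X_k,H_k)
\]
is true, but combining it with a \emph{union bound} over $k$ yields
\[
\P\Bigl(\min_k \dist(X_k,H_k)\le \eps\Bigr) \le \sum_{k=1}^n \P\bigl(\dist(X_k,H_k)\le \eps\bigr) \le n\bigl(C\eps+\exp(-cn)\bigr),
\]
which is off by a factor of $n$ from the target $C\eps+\exp(-cn)$. The point you are missing is that incompressibility does more than guarantee one large coordinate: if $\|Mx\|\le \eps\rho/\sqrt{n}$ for some $x\in\Incomp(\delta,\rho)$, then there are at least $\delta n$ indices $k$ with $|x_k|\ge \rho/\sqrt{n}$, and for \emph{every} such $k$ one has $\dist(X_k,H_k)\le \eps$. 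Summing indicators and taking expectations gives the averaging bound
\[
\P\Bigl(\inf_{x\in\Incomp}\|Mx\|\le \frac{\eps\rho}{\sqrt{n}}\Bigr) \le \frac{1}{\delta n}\sum_{k=1}^n \P\bigl(\dist(X_k,H_k)\le \eps\bigr),
\]
which recovers the missing factor of $n$. This is precisely the mechanism recorded in the present paper as Claim~\ref{claim:averaging} (in the more general form needed for Theorem~\ref{theorem:main}), and it is the correct replacement for your union bound.
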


Furthermore, it was shown by Tao and Vu \cite{TVleast} that the statistics of $\sqrt{n}\sigma_n$ is universal. Thus our understanding in the case $k=1$ is nearly complete.

In this note we address the overcrowding direction by investigating Theorem \ref{theorem:Ginibre} under various settings.

\begin{theorem}[the iid case]\label{theorem:main} Let $M=(m_{ij})_{1\le i,j\le n}$ be a random matrix where $m_{ij}$ are iid copies of a subgaussian random variable $\xi$ as in \eqref{eqn:xi}. For any $k\ge 1$ there exist a constant $C_k$ depending on $k$ and a constant $c$ depending only on the subgaussian moment of $\xi$ such that for any $\eps>0$,
 \begin{equation}\label{eqn:k^2}
 \P(\sigma_{n-k+1} \le \frac{\eps}{\sqrt{n}}) \le C_k \eps^{k^2} + \exp(-cn).
\end{equation} 
Furthermore, for any $0<\gamma<1$, there exist $C,c$ and $\gamma_0$ such that for $\gamma_0^{-1} < k< \gamma_0 n$
 \begin{equation}\label{eqn:assym} 
 \P(\sigma_{n-k+1} \le \frac{\eps}{\sqrt{n}}) \le (\frac{C\eps}{k})^{(1-\gamma)k^2} + \exp(-cn).
 \end{equation}
Equivalently, with $k < \gamma_0 n$, for any $0<\eps<1$ let $I$ be the interval $[0, k \eps/\sqrt{n}]$ and $N_I$ be the number of singular values belonging to $I$. Then we obtain the following (overcrowding) Wegner-type estimate at the hard edge
\begin{equation}\label{eqn:overcrowding}
\P(N_I \ge k)  \le (C\eps)^{(1-\delta)k^2} + \exp(-cn).
\end{equation}
\end{theorem}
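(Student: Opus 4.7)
My plan is to combine the variational characterization of singular values, a compressibility/incompressibility dichotomy on the Grassmannian $\Gr(k,n)$, and a multi-dimensional small-ball estimate of Littlewood--Offord type for projections onto $k$-dimensional subspaces.

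First, observe that $\sigma_{n-k+1}(M)\le\eps/\sqrt n$ holds if and only if there is a $k$-dimensional subspace $V\subset\R^n$ such that $\|Mv\|\le\eps/\sqrt n$ for every unit $v\in V$; writing $V\in\R^{n\times k}$ for a basis with orthonormal columns, this is $\|MV\|_{\operatorname{op}}\le\eps/\sqrt n$, or equivalently $\sum_{i=1}^n\|P_VR_i\|^2\le k\eps^2/n$ where $R_1,\dots,R_n$ are the independent rows of $M$. Split $\Gr(k,n)$ into compressible subspaces (those close to a subspace supported on at most $\delta n$ coordinates) and incompressible ones. The compressible part contributes at most $\exp(-cn)$ by a direct net argument exactly as in the $k=1$ analysis of Rudelson--Vershynin, extended to $k$-dimensional subspaces.

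For the incompressible part, the key input is the higher-dimensional small-ball inequality
\[
\P\bigl(\|P_VR\|\le t\bigr)\le (Ct)^k+\exp(-cn),
\]
valid uniformly over incompressible $V$. This is established by selecting from $V\in\R^{n\times k}$ an index set $I$ of size $k$ such that the corresponding $k\times k$ submatrix of $V$ is well-conditioned, conditioning on $\{r_j:j\notin I\}$, and invoking a Littlewood--Offord-type anti-concentration in $\R^k$ with the aid of a \emph{subspace LCD}. Once this single-row small ball is in hand, a careful stratification of the incompressible Grassmannian by LCD value together with a level-wise net argument---so that the net cardinality on each LCD stratum is balanced against the sharper small-ball bound it enjoys---yields $C_k\eps^{k^2}+\exp(-cn)$ overall, avoiding the exponential-in-$n$ loss that would arise from a naive $\eps$-net on the whole Grassmannian. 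The sharper bound \eqref{eqn:assym} then follows by tracking the $k$-dependence throughout, and \eqref{eqn:overcrowding} is the immediate rescaling $\eps\mapsto k\eps$.

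The main technical obstacle is the subspace LCD and the accompanying inverse Littlewood--Offord theorem in $\R^k$: one needs a workable notion of arithmetic genericity for a $k$-tuple of vectors, a small-ball inequality whose constants are uniform over incompressible $V$ at a fixed LCD level, and a covering estimate for the set of incompressible subspaces with LCD at most $D$ that is tight enough to carry out the level-wise union bound. This generalizes the $k=1$ LCD machinery of Rudelson--Vershynin, and assembling these pieces so that the exponents in the final bound come out to exactly $k^2$ is where most of the work lies.
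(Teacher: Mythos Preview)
Your approach is substantially different from the paper's, and the route you sketch contains a real gap.

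The paper does \emph{not} run a net over the incompressible Grassmannian at all. Instead it converts the event $\sigma_{n-k+1}\le\eps/\sqrt n$ into a statement about \emph{columns}: if $Z\in\R^{k\times n}$ has as rows the $k$ orthonormal approximate null vectors, then incompressibility (Lemma~\ref{lemma:comp}) forces $Z$ to contain $\Theta_k(n^k)$ well-conditioned $k\times k$ minors $Z_{(j_1,\dots,j_k)}$ with $\|Z_{(j_1,\dots,j_k)}^{-1}\|=O_k(\sqrt n)$. Inverting each such minor shows that the $k$ columns $\col_{j_1},\dots,\col_{j_k}$ of $M$ all lie within $O_k(\eps)$ of the span $H$ of the remaining $n-k$ columns. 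Since those $k$ columns are independent of each other \emph{and of} $H$, one applies the Rudelson--Vershynin distance estimate (Theorem~\ref{theorem:dist}) $\P(\dist(\col,H)\le\delta)\le(C\delta/\sqrt k)^k+e^{-cn}$ as a black box, takes the $k$-th power to get $\eps^{k^2}$, and then \emph{averages} over the $\Theta_k(n^k)$ good tuples (Claim~\ref{claim:averaging}). This averaging is what replaces any union bound over subspaces. For \eqref{eqn:assym} the paper stays with columns and sharpens the minor-selection step via no-gaps delocalization and restricted invertibility (Theorems~\ref{theorem:non-gap}, \ref{theorem:submatrix}, \ref{theorem:submatrix'}), not by refining a Grassmannian net.

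Your plan to stratify the incompressible part of $\Gr(k,n)$ by a subspace-LCD and run a level-wise net is exactly the step the paper's column/averaging trick is designed to bypass. The difficulty you flag is not cosmetic: a net on $\Gr(k,n)$ at the scale needed to stabilize $\|MV\|_{\mathrm{op}}$ under the norm bound $\|M\|=O(\sqrt n)$ has cardinality of order $(Cn/\eps)^{k(n-k)}$, and balancing this against products of the per-row small-ball bound so that the answer collapses to $C_k\eps^{k^2}$ with no residual power of $n$ is not a matter of bookkeeping---it would require re-deriving the structure theory behind Theorem~\ref{theorem:dist} at the level of the full Grassmannian rather than invoking it once per column. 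You concede this (``where most of the work lies''), but absent that piece the argument is a programme rather than a proof. Note also that your small-ball input $\P(\|P_VR\|\le t)\le(Ct)^k+e^{-cn}$ is asserted ``uniformly over incompressible $V$'', whereas Theorem~\ref{theorem:dist} delivers it only for $V=H^\perp$ with $H$ a \emph{random} column span; promoting this to arbitrary incompressible $V$ is itself part of the missing LCD machinery.
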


Estimate \eqref{eqn:overcrowding} improves \cite[Proposition 4.1]{CMS} of Cacciapuoti, Maltsev and Schlein where they showed $\P(N_I \ge k)  = O(\eps^{Ck})$ with the assumption that $\xi$ has bounded density and subgaussian tail. Under this assumption we can omit the additional terms $\exp(-cn)$ above, see Remark \ref{remark:iid:cont}.

\subsection{Perturbation of random iid matrices} In connection to Edelman's formula for Ginibre ensemble, and motivated by the study of smoothed analysis, Sankar, Spielman and Teng \cite{ST, SST} have found the following striking phenomenon. 

\begin{theorem}\label{theorem:SST} Let $M$ be a Ginibre ensemble, and let $F$ be any deterministic matrix. Then for any $\eps > 0$
$$\P (\sigma_n(M+F) \le  \frac{\eps}{\sqrt{n}} ) =O(\eps).$$
\end{theorem}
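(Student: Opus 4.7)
The plan is to combine Gaussian rotational invariance with column-by-column Gaussian anti-concentration.

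First, by rotational invariance of the Ginibre ensemble ($M \stackrel{d}{=} UMV$ for orthogonal $U, V$), we have $\sigma_n(M+F) \stackrel{d}{=} \sigma_n(M + UFV)$. Choosing $U, V$ from the SVD of $F$, we may thus assume $F = \diag(f_1, \ldots, f_n)$ with $f_i \ge 0$; this diagonalisation makes the conditioning below cleaner.

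The heart of the argument is a \emph{fixed-direction} tail bound: for every unit vector $u \in \R^n$,
\begin{equation}\label{eq:fixed-dir}
\P\!\left(\|(M+F)^{-1} u\| \ge y\right) \lesssim \frac{1}{y}.
\end{equation}
To see that this is plausible, set $A = M+F$ and condition on all columns of $A$ except the first. Let $w$ be the unit normal to the span of the remaining $n-1$ columns, fixed under the conditioning. Taking the inner product of $A v = u$ (with $v = A^{-1}u$) with $w$, and using $w \cdot c_k = 0$ for $k \ge 2$, yields
\[
(A^{-1}u)_1 = \frac{w \cdot u}{w \cdot c_1}, \qquad c_1 = g_1 + f_1,
\]
where $g_1$ is the Gaussian part of the first column. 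Since $g_1 \cdot w \sim N(0,1)$, the denominator has Gaussian density bounded by $1/\sqrt{2\pi}$. Analogous formulas for every coordinate produce the identity
\[
\|A^{-1}u\|^2 = \sum_{k=1}^n \frac{(u \cdot w^{(k)})^2}{(c_k \cdot w^{(k)})^2},
\]
and the delicate point is to combine these $n$ correlated terms into \eqref{eq:fixed-dir} without losing a factor of $n$ in a naive union bound. For pure Ginibre ($F = 0$), rotational symmetry identifies $\|A^{-1}u\|$ in law with $1/|N|$, $N \sim N(0,1)$, so \eqref{eq:fixed-dir} holds with constant $\sqrt{2/\pi}$.

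Once \eqref{eq:fixed-dir} is in place, the operator-norm tail follows from the spherical averaging inequality $\|A^{-1}\|_{\mathrm{op}}^2 \le n \cdot \E_{u \in S^{n-1}} \|A^{-1} u\|^2$ together with a truncation-plus-Markov argument that extracts $\P(\E_u \|A^{-1} u\|^2 \ge z) \lesssim 1/\sqrt z$ from \eqref{eq:fixed-dir}. Substituting $z = x^2/n$ produces $\P(\|A^{-1}\|_{\mathrm{op}} \ge x) \lesssim \sqrt n/x$, and the choice $x = \sqrt n/\eps$ yields the advertised $\P(\sigma_n(M+F) \le \eps/\sqrt n) = O(\eps)$.

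The main obstacle, by a considerable margin, is establishing \eqref{eq:fixed-dir} uniformly in $n$ and in $F$. For general $F$ the normals $w^{(k)}$ are no longer uniformly distributed on $S^{n-1}$, so the clean rotation argument from the $F = 0$ case does not apply directly; one must exploit the correlations among the terms $(u \cdot w^{(k)})^2/(c_k \cdot w^{(k)})^2$. Intuitively, the sum concentrates on a single ``worst direction'' attached to the smallest singular value, and the real work is to make this precise for arbitrary shifts while retaining the correct dependence on $n$.
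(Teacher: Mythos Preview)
The paper does not supply its own proof of this theorem: it is quoted from Sankar, Spielman and Teng \cite{SST} as a known result, so there is no in-paper argument to compare against. Your outline is in fact the original SST strategy --- rotational invariance to diagonalise $F$, a dimension-free tail bound $\P(\|(M+F)^{-1}u\|\ge y)\lesssim 1/y$ for a fixed unit vector $u$, and then an averaging step to pass from a fixed direction to the operator norm.

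That said, what you have written is a plan, not a proof. You correctly isolate the fixed-direction estimate \eqref{eq:fixed-dir} as the crux and then explicitly say ``the real work is to make this precise'' without doing so. The display $\|A^{-1}u\|^2=\sum_k (u\cdot w^{(k)})^2/(c_k\cdot w^{(k)})^2$ is a useful identity, but as you note a naive union bound over $k$ costs a factor of $n$, and the paragraph ending ``the sum concentrates on a single worst direction\dots'' is intuition rather than argument. In \cite{SST} this step is handled by a change of variables exploiting the Gaussian density together with a careful induction (or, in later treatments, by a direct bound on the conditional density of $\|A^{-1}u\|$); either way it is the entire substance of the theorem. Until you actually carry out this step, the proposal is incomplete at exactly the point where the difficulty lies.

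A smaller remark: the passage from \eqref{eq:fixed-dir} to the operator-norm bound via ``truncation-plus-Markov'' is also only sketched. The inequality $\|A^{-1}\|_{\mathrm{op}}^2 \le n\,\E_{u}\|A^{-1}u\|^2$ holds, but extracting $\P(\E_u\|A^{-1}u\|^2\ge z)\lesssim 1/\sqrt z$ from a first-moment-type tail on $\|A^{-1}u\|$ requires an intermediate argument (typically Fubini over the sphere combined with a layer-cake decomposition), which you should spell out.
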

Thus, no matter deterministic matrix $F$ we perturb, the gaussian randomness regularizes it out in such a way that $\sigma_n(M+F)$ behaves similar to $\sigma_n(M)$. Interestingly, this effect no longer holds if the entries of $M$ are allowed to have discrete distributions, see for instance the construction by Tao and Vu in \cite{TVsmooth}, or by Rudelson in \cite{FV}. In fact in the latter example, one can have iid $M$ and deterministic $\|F\|_2=N$ for any $N$ such that $\P(\sigma_n(M+F) \le \sqrt{n}/N) \ge 1/2$.

Although one can still get useful bounds on $\sigma_n(M+F)$ when $\|M\|_2=n^{O(1)}$ (see for instance \cite{TVsmooth} by Tao and Vu), these examples just demonstrate that there is no universal $F$-independent asymptotic behavior of $\sigma_n(X+F)$ in terms of randomness. 

However, we might still ask: \\

\centerline{\it What about the local interaction of the eigenvalues (singular
  values)?}

Our results below, Theorem \ref{theorem:main:perturb:iid} and Corollary \ref{cor:main:perturb:iid}, support the phenomenon that a major part of the eigenvalues (singular values) stays well-separated under arbitrary perturbation even in random matrices having discrete entry distributions.

\begin{theorem}[the perturbed case for iid matrices]\label{theorem:main:perturb:iid} Let $M$ be a random matrix where the entries are iid copies of a random variable $\xi$ of variance one. Let $F$ be any deterministic matrix. There exists an absolute constant $C$ such that for any $\eps>0$ and for any $k\ge 1$ we have
 \begin{equation}\label{eqn:k^2:perturb:iid}
 \P\left( \sigma_{n-k+1}(M+F) \le \frac{\eps}{\sqrt{n}} \right) \le n^{k-1}  (C kp)^{(k-1)^2}
\end{equation} 
where  $p=p(\eps)=\sup_{x\in \R}\P(|\xi-x| \le \eps)$.

Furthermore, for given $0<\gamma<1$, there exists a constant  $C=C(\gamma)$ such that for any $k$ we have
 \begin{equation}\label{eqn:assym:perturb:iid} 
  \P\left( \sigma_{n-k+1}(M+F) \le \frac{\eps k}{\sqrt{n}} \right)  \le n^{\lfloor (1-\gamma/2)k \rfloor} (Cp)^{(1-\gamma)k^2}.
 \end{equation}

 Equivalently, with $I$ being the interval $[0, \frac{\eps k}{\sqrt{n}}]$ and $N_I$ be the number of singular values of $M+F$ in $I$ 
\begin{equation}\label{eqn:overcrowding:perturb:iid}
\P(N_I \ge k)  \le n^{\lfloor (1-\gamma/2)k \rfloor} (Cp)^{(1-\gamma)k^2}.
\end{equation}

Finally, if $|\xi|=O(1)$ with probability one then there exist constants $K$ and $c_1,c_2$ depending on $\xi$ such that for any $k>K$ we have
 \begin{equation}\label{eqn:discrete:perturb:iid} 
  \P\left( \sigma_{n-k+1}(M+F) \le \frac{c_1 k}{\sqrt{n}} \right)  \le n^{k} e^{-c_2 k^2}.
 \end{equation}

\end{theorem}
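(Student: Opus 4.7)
The plan is to deduce \eqref{eqn:discrete:perturb:iid} directly from the already-established \eqref{eqn:assym:perturb:iid}: I will choose $c_1$ small enough (depending on $\xi$) so that $C(\gamma)\,p(c_1) < 1$ for a suitable fixed $\gamma \in (0,1)$, where $p(\eps) = \sup_{x \in \R}\P(|\xi-x| \le \eps)$ is the small-ball function of $\xi$ and $C(\gamma)$ is the constant appearing in \eqref{eqn:assym:perturb:iid}. With $\beta := C(\gamma)\,p(c_1) < 1$ in hand, the factor $(Cp)^{(1-\gamma)k^2}$ in \eqref{eqn:assym:perturb:iid} converts into the sought Gaussian-in-$k$ decay $e^{-c_2 k^2}$.

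To justify that such a $c_1$ exists when $\xi$ is bounded, I exploit only $|\xi|\le M$ a.s.\ and $\Var(\xi)=1$. If $p = \P(|\xi - x^\ast| \le \eps)$ with the optimal center $x^\ast$, then because $|\xi - x^\ast| \le 2M$ on the complementary event,
\[
1 \;=\; \Var(\xi) \;\le\; \E(\xi - x^\ast)^2 \;\le\; p\,\eps^2 + (1-p)\,4M^2,
\]
which rearranges to $p(\eps) \le (4M^2-1)/(4M^2 - \eps^2)$. In particular $p(\eps) \le q(\xi) < 1$ uniformly in $\eps$ small, which is the quantitative spread input I need.

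Fixing any $\gamma \in (0,1)$ (say $\gamma = 1/2$) and shrinking $c_1$ further so that $\beta := C(\gamma)\,p(c_1) < 1$, an application of \eqref{eqn:assym:perturb:iid} with $\eps = c_1$ yields
\[
\P\!\left(\sigma_{n-k+1}(M+F) \le \frac{c_1 k}{\sqrt{n}}\right) \;\le\; n^{\lfloor(1-\gamma/2)k\rfloor}\,\beta^{(1-\gamma)k^2} \;\le\; n^{k}\,e^{-c_2 k^2},
\]
with $c_2 := (1-\gamma)\log(1/\beta) > 0$; the inequality $\lfloor(1-\gamma/2)k\rfloor \le k$ is automatic, and any $K \ge 1$ suffices to absorb the floor. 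This is exactly \eqref{eqn:discrete:perturb:iid}.

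The main obstacle is arranging $C(\gamma)\,p(c_1) < 1$ uniformly over the bounded class. For $\xi$ with a nonatomic component one has $p(c_1) \to 0$ as $c_1 \to 0$, so the condition is free. For purely atomic $\xi$ (e.g.\ Bernoulli $\pm 1$) the quantity $p(c_1)$ is bounded below by the largest atom of $\xi$, which the spread bound above forces to be at most $1 - 1/(4M^2)$; so the argument runs whenever $C(\gamma) < 4M^2/(4M^2-1)$ for the chosen $\gamma$. If the constant produced by the proof of \eqref{eqn:assym:perturb:iid} does not already meet this criterion, I would revisit that proof under the additional assumption $|\xi|=O(1)$, using the deterministic operator-norm control $\|M\|_2 = O(\sqrt{n})$ that becomes available in the bounded regime to tighten the net/union-bound step and shrink $C(\gamma)$ accordingly.
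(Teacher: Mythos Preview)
Your proposal only addresses \eqref{eqn:discrete:perturb:iid}, taking \eqref{eqn:assym:perturb:iid} as given. Setting that aside, the deduction you propose has a genuine gap in exactly the case that matters.

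The whole argument hinges on securing $C(\gamma)\,p(c_1)<1$. For $\xi$ with a nonatomic part this is free, as you note. But the point of \eqref{eqn:discrete:perturb:iid} is precisely to handle bounded \emph{discrete} $\xi$ (Bernoulli being the model case), where $p(c_1)$ is bounded below by the largest atom---for Bernoulli, $p(c_1)=1/2$ for all small $c_1$. So you need $C(\gamma)<2$. Nothing in the proof of \eqref{eqn:assym:perturb:iid} yields this: the constant $C(\gamma)$ absorbs the absolute constant $C_0$ from the Rudelson--Vershynin small-ball bound (Theorem~\ref{theorem:dist:cont}), factors of $K_0$ from restricted invertibility (Theorem~\ref{theorem:submatrix'}), and a $2^l$ union bound. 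Your suggested fix---exploiting $\|M\|_2=O(\sqrt{n})$ to tighten a net argument---does not match the structure of the proof: the argument in Section~\ref{section:main:perturb:iid} never uses an operator-norm bound or an $\eps$-net on the sphere (it cannot, since $F$ is arbitrary), so there is no such step to tighten.

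The paper does not try to push the constant in \eqref{eqn:assym:perturb:iid} below a threshold. Instead it reruns the same geometric reduction (Lemma~\ref{lemma:l:perturb} and the passage to \eqref{eqn:small:perturb:iid}) but at the final step replaces the small-ball input Theorem~\ref{theorem:dist:cont} by the Talagrand-type concentration estimate Theorem~\ref{theorem:dist:discrete}, which for bounded $\xi$ gives
\[
\P\big(\dist(\col_{i_j},H)\le c\sqrt{k}\big)\le C\,e^{-c' k}
\]
directly, with no dependence on the atom structure of $\xi$. Multiplying over the $l'\asymp k$ independent columns gives the $e^{-c_2 k^2}$ decay without ever needing $Cp<1$. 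That replacement of tools---not a sharpening of constants---is the missing idea.
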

It is not clear what would be the optimal exponents on the multiplicative factors $n^{O(.)}$ above (and also in Theorem \ref{theorem:main:perturb:sym} below). With our current method, the more (delocalization) information we know about the singular vectors (the eigenvectors), the better exponents we could obtain. We next deduce two consequences.
\begin{corollary}\label{cor:main:perturb:iid} Let $M$ be a random matrix where the entries are iid copies of a random variable $\xi$ of variance one. Let $F$ be any deterministic matrix. 

\begin{itemize}
\item If the common distribution $\xi$ has a density function bounded by $K$, then for given $0<\gamma<1$ there exists a constant  $C=C(\gamma)$ such that for any $\eps>0$ and $k\ge 1$, with $I=[0, \frac{\eps k}{\sqrt{n}}]$
\begin{equation}\label{eqn:overcrowding:perturb:cont:iid}
\P(N_I \ge k)  \le n^k (CK \eps )^{(1-\gamma)k^2}.
\end{equation} 
\item If $\xi$ has discrete distribution, then there exist constants $C,c_1,c_2$ depending on $\xi$ such that for any $k\ge C \log n$, with $I=[0, \frac{c_1k }{\sqrt{n}}]$
\begin{equation}\label{eqn:overcrowding:perturb:discrete:iid}
\P(N_I \ge k)  \le e^{-c_2k^2}.
\end{equation} 
\end{itemize}
\end{corollary}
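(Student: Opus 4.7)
The plan is to deduce both parts by feeding distribution-specific inputs into Theorem \ref{theorem:main:perturb:iid}; since the heavy lifting is already encoded in that theorem, the corollary should reduce to a short bookkeeping step.

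For the density case (i), the first step is to bound the small-ball function $p(\eps) = \sup_{x\in\R}\P(|\xi-x|\le\eps)$. A density uniformly bounded by $K$ forces
$$p(\eps) \le \sup_{x\in\R} \int_{x-\eps}^{x+\eps} K\, dt = 2K\eps.$$
Substituting this directly into \eqref{eqn:overcrowding:perturb:iid} and using $\lfloor(1-\gamma/2)k\rfloor \le k$ gives
$$\P(N_I \ge k) \le n^{k}(2CK\eps)^{(1-\gamma)k^2},$$
which, after renaming the constant, is exactly \eqref{eqn:overcrowding:perturb:cont:iid}.

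For the discrete case (ii), I would specialize to bounded $\xi$ (so $|\xi|=O(1)$ a.s.) and apply \eqref{eqn:discrete:perturb:iid} directly:
$$\P(N_I \ge k) \le n^{k} e^{-c_2 k^2} = \exp(k\log n - c_2 k^2).$$
The only remaining task is to absorb the prefactor $n^k$ into the Gaussian-type tail. Choosing $C \ge 2/c_2$ and restricting to $k \ge C\log n$ gives $k\log n \le k^2/C \le (c_2/2)k^2$, so the right-hand side is at most $\exp(-(c_2/2)k^2)$, yielding \eqref{eqn:overcrowding:perturb:discrete:iid} with constant $c_2/2$.

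The main obstacle is not in the corollary itself but is already packaged into Theorem \ref{theorem:main:perturb:iid} — in particular the refined prefactor exponent $\lfloor(1-\gamma/2)k\rfloor$ in \eqref{eqn:overcrowding:perturb:iid} (which keeps the $n$-dependence just small enough to be killed by the main $(Cp)^{(1-\gamma)k^2}$ term) and the exponential improvement \eqref{eqn:discrete:perturb:iid} available when $\xi$ is bounded. The only nontrivial trade-off visible at the corollary level is that, to swallow $n^k$ in the discrete case, one is forced to require $k \ge C\log n$; this is the reason the discrete statement cannot be made uniformly in small $k$ the way the continuous statement can.
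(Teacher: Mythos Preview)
Your deduction is correct and is exactly the intended route: the paper does not supply a separate proof of Corollary~\ref{cor:main:perturb:iid}, and both parts are meant to fall out of Theorem~\ref{theorem:main:perturb:iid} by the bookkeeping you describe --- bounding $p(\eps)\le 2K\eps$ for (i) and absorbing the $n^k$ prefactor via the restriction $k\ge C\log n$ for (ii). Your observation that the discrete part leans on \eqref{eqn:discrete:perturb:iid}, and hence implicitly on the boundedness hypothesis $|\xi|=O(1)$, is accurate; the corollary's phrasing ``discrete distribution'' should be read in that sense.
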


The quadratic rate in \eqref{eqn:overcrowding:perturb:discrete:iid} is consistent with Theorem \ref{theorem:Ginibre}. Furthermore, the bound is valid for any non-degenerate discrete distribution and any $F$. 



\subsection{Symmetric Wigner matrices with sub-gaussian tails} A symmetric Wigner matrix $X$ of size $n$ with sub-gaussian tail is a random symmetric matrix whose strictly upper triangular entries are iid copies of a real-valued random variable $\xi$ as in \eqref{eqn:xi}, and whose diagonal entries are independent sub-gaussian random variables with mean zero and  variances bounded by $n^{1-o(1)}$, with the diagonal entries independent of the strictly upper triangular entries.  

Similarly to Theorem \ref{theorem:RV}, the following result was shown by Vershynin \cite{Vershynin}.
\begin{theorem}\label{theorem:V}  With $X=(x_{ij})_{1\le i,j\le n}$ a Wigner matrix of subgaussian entries  as in \eqref{eqn:xi}, there exists an absolute constant $c<1$ such that for any fixed real number $z$ and any $\eps>0$ we have
$$\P(\min_k |\lambda_k(X) -z| \le \frac{\eps}{\sqrt{n}}) =O(\eps^{1/9} + e^{-n^c}).$$
\end{theorem}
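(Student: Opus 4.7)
The plan is to reduce Theorem \ref{theorem:V} to an estimate on the least singular value via the identity $\min_k|\lambda_k(X)-z|=\sigma_n(X-zI)$, and then run a Rudelson--Vershynin style invertibility argument adapted to the symmetric setting. I would fix a small parameter $\rho = \rho(\eps)$ and partition the unit sphere $S^{n-1}$ into the sets $\Comp(\delta,\rho)$ and $\Incomp(\delta,\rho)$ for a small absolute $\delta$, using
$$\sigma_n(X-zI) = \inf_{v \in S^{n-1}} \|(X-zI)v\|_2.$$

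For the compressible part, every $v \in \Comp(\delta,\rho)$ lies within $\rho$ of a $\delta n$-sparse unit vector $w$. Since $X-zI$ differs from $X$ by a deterministic shift and $\|Xw\|_2$ concentrates around $\sqrt{n}$ by subgaussian concentration (Hanson--Wright), a net argument over sparse unit vectors (whose cardinality is $(C/\rho)^{O(\delta n)}$) shows that with probability at least $1-e^{-cn}$,
$$\inf_{v\in \Comp(\delta,\rho)} \|(X-zI)v\|_2 \geq c_0 \sqrt{n}.$$
The only place $z$ enters here is through a harmless deterministic shift in the concentration estimate, and this is where the $e^{-n^c}$ term in the conclusion begins to emerge (after allowing $\|X\|_2\le C\sqrt{n}$ on a good event).

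For the incompressible part, I would invoke the invertibility-via-distance inequality
$$\inf_{v \in \Incomp(\delta,\rho)} \|(X-zI)v\|_2 \;\gtrsim\; \frac{\rho}{\sqrt{n}}\; \min_{1\le i\le n} \dist\!\bigl(R_i,\, H_i\bigr),$$
where $R_i$ is the $i$-th column of $X-zI$ and $H_i$ is the span of the remaining columns. Because of the symmetry of $X$ these two are correlated, so the iid argument must be modified. I would condition on the $(n-1)\times(n-1)$ principal sub-matrix $X^{(i)}$ and write $\dist(R_i,H_i) \ge |\langle u, X_{i,\hat\imath}\rangle - z u_i|$ for a unit normal $u$ to $H_i$ expressed via $X^{(i)}-zI$. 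Given $X^{(i)}$, the vector of off-diagonal entries $X_{i,\hat\imath}=(x_{ij})_{j\neq i}$ is independent of $u$, so an Esseen-type small-ball bound gives
$$\P\!\left(|\langle u, X_{i,\hat\imath}\rangle - z u_i| \le t \,\Big|\, X^{(i)}\right) \;\lesssim\; \frac{t}{\sqrt{n}} + \frac{1}{\sqrt{n}\,\LCD(u)}.$$

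The main obstacle is Step~3: showing that the normal vector $u$, built from $X^{(i)}-zI$, has exponentially large $\LCD$ with overwhelming probability, uniformly in the fixed shift $z$. This is the analogue for shifted symmetric matrices of the eigenvector delocalization / large-LCD statements of Rudelson--Vershynin, and it is genuinely more delicate because the perturbation $-zI$ couples scales along the spectral axis and because symmetry prevents a clean decoupling between the sub-matrix $X^{(i)}$ and the normal direction it produces. I would handle this by a layered net argument: cover the exceptional set of $u$'s with small LCD by a volumetric $\rho$-net, and for each $u$ in the net bound $\P(\|(X^{(i)}-zI)u\|\le \rho\sqrt{n})$ by another pass of the compressible/incompressible dichotomy applied to $X^{(i)}-zI$, together with a tensorized subgaussian concentration. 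Balancing the three ingredients --- net entropy, individual anti-concentration, and the Esseen bound --- is what forces the two weakened exponents: the tail $e^{-n^c}$ with $c<1$ rather than $e^{-cn}$, and the arithmetic rate $\eps^{1/9}$ rather than $\eps$. The $1/9$ is an artifact of the tradeoff between $\rho$, the LCD threshold, and the sub-matrix resolvent estimates, and would be where essentially all of the bookkeeping is concentrated.
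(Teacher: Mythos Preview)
The paper does not prove Theorem \ref{theorem:V}; it is quoted from \cite{Vershynin} as background, so there is no in-paper argument to compare against, only Vershynin's original one.

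Your global strategy (compressible/incompressible split, invertibility via distance, LCD control of a normal vector) is the right template and is indeed how \cite{Vershynin} proceeds. But the decoupling step has a genuine gap. You assert that, after conditioning on the principal minor $X^{(i)}$, the unit normal $u$ to $H_i$ can be ``expressed via $X^{(i)}-zI$'' and is therefore independent of $X_{i,\hat\imath}=(x_{ij})_{j\ne i}$. This is false. The subspace $H_i$ is spanned by the columns $\col_j(X-zI)$ for $j\ne i$, and the $i$-th coordinate of each such column is $x_{ij}$; consequently any normal $u\in\R^n$ to $H_i$ depends on the whole off-diagonal vector $X_{i,\hat\imath}$, not only on $X^{(i)}$. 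Concretely, when $X^{(i)}-zI$ is invertible the (unnormalized) normal is $u=\bigl(-(X^{(i)}-zI)^{-1}X_{i,\hat\imath},\,1\bigr)$, and one obtains
\[
\dist(\col_i,H_i)=\frac{\bigl|\,x_{ii}-z-X_{i,\hat\imath}^{T}(X^{(i)}-zI)^{-1}X_{i,\hat\imath}\,\bigr|}{\bigl(1+\|(X^{(i)}-zI)^{-1}X_{i,\hat\imath}\|_2^{2}\bigr)^{1/2}}.
\]
So the quantity whose small-ball probability must be controlled is a \emph{quadratic} form in the fresh variables $X_{i,\hat\imath}$, not the linear form $\langle u,X_{i,\hat\imath}\rangle$ you wrote. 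The natural attempt to force linearity by projecting away the $i$-th coordinate (as in Fact \ref{fact:reduction}) is too lossy in the $k=1$ setting: the projected columns generically span all of $\R^{n-1}$ and the projected distance collapses to zero.

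This quadratic anti-concentration is exactly the technical core of \cite{Vershynin}: a decoupling step reduces the quadratic form to a bilinear one in independent copies, after which an LCD/arithmetic-structure analysis of rows of $(X^{(i)}-zI)^{-1}$ yields the small-ball bound. The accumulated losses in that reduction are what produce the exponent $1/9$ and the subexponential tail $e^{-n^{c}}$. Your sketch stays at the linear mechanism of the iid case; the missing idea is the quadratic small-ball machinery (cf.\ also the remark following \eqref{eqn:EF:sym}).
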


This bound does not seem to be optimal, in fact the RHS is conjectured to be similar to that  of Theorem \ref{theorem:RV} \cite{Vershynin}. Note that it follows from a result by  Bourgade, Erd\H{o}s, Yau and Yin \cite{BEYY} that the distribution of $\min_k \sqrt{n}|\lambda_k(X)|$ is universal. 

Under some strong smoothness and decay hypotheses on the entries of a symmetric Wigner matrix $X$, but without the assumption that the entries have the same variance, a near optimal Wegner-type estimate has been shown in \cite[Theorem B1]{BEYY} (see also \cite{ErdSchYau2010}).
\begin{theorem}\label{theorem:BEYY} Let $X=(x_{ij})_{1\le i,j\le n}$ be a symmetric Wigner matrix with entries of finite $p$-moment for some sufficiently large $p$, and $G$ be a GOE matrix. For any $t>0$ we denote $\lambda_1(t)\le \dots \le \lambda_n(t)$ the eigenvalues of $\sqrt{1-t} X+ \sqrt{t} G$. Define the set 
$$\CG_\delta:=\Big\{|\lambda_i - \gamma_i| \le n^{-2/3+\delta} (\min(i,n+1-i))^{-1/3} \mbox{ for all } i \in [n]\Big\},$$
where $\gamma_i$ denotes the classical location of the $i$-th eigenvalue under the semicircle law ordered in increasing order. For any fixed $\kappa$ there exists $C_1>0$ such that for any $k\ge 1, \tau,\delta>0$ there exists $C_2>0$ such that the following holds for any $z \in (-2+\kappa, 2-\kappa)/n^{1/2}, t\in [n^{-\tau},1]$ and $\eps>0$ 
\begin{equation}\label{eqn:esy-est}
 \P\left(  \CG_\delta \wedge z- \frac{\eps}{\sqrt{n}} \leq \lambda_i \leq \lambda_{i+k-1} \leq z +  \frac{\eps}{\sqrt{n}} \hbox{ for some } i  \right) \le C_2 n^{2 k \delta + C_1 k^2 \tau} \eps^{k(k+1)/2}. 
\end{equation}
\end{theorem}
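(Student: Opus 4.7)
The plan is to condition on the Wigner matrix $X$ and exploit the explicit joint density of the eigenvalues of $H_t := \sqrt{1-t}\,X + \sqrt{t}\,G$ given the ordered eigenvalues $\mu_1 \le \dots \le \mu_n$ of $X$. By the Brezin--Hikami / Harish-Chandra--Itzykson--Zuber formula in its $\beta=1$ form, the conditional density of $\lambda_1 \le \dots \le \lambda_n$ has the schematic shape
\[
p_t(\lambda \mid \mu) \;=\; c_{n,t}\cdot \frac{\prod_{i<j}(\lambda_j - \lambda_i)}{\prod_{i<j}(\mu_j - \mu_i)} \cdot K_t(\lambda,\mu),
\]
where $K_t$ is a Pfaffian built from Gaussian propagators $\exp\!\bigl(-n(\lambda_a - \sqrt{1-t}\,\mu_b)^2/(2t)\bigr)$. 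The Vandermonde factor in the numerator is what encodes the GOE level repulsion and is the ultimate source of the exponent $k(k+1)/2$.

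Next I would work on the rigidity event $\CG_\delta$. On this event each $\mu_j$ sits within $n^{-2/3+\delta}(\min(j,n+1-j))^{-1/3}$ of its classical location $\gamma_j$, which yields a deterministic lower bound on the denominator $\prod_{i<j}(\mu_j-\mu_i)$ up to a slack of $n^{O(k\delta)}$ once the $k$ indices close to the window at $z$ are frozen; this is where the factor $n^{2k\delta}$ materializes. The assumption $t \ge n^{-\tau}$ then lets me estimate the Pfaffian $K_t$ pointwise by $n^{O(k^2\tau)}$: each propagator has standard deviation $\sqrt{t/n} \gg n^{-(1+\tau)/2}$, so each of the roughly $k^2$ pairings entering the Pfaffian within the small window contributes at most $n^{O(\tau)}$ at its peak, yielding the $n^{C_1 k^2 \tau}$ loss.

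With these pointwise bounds in hand, the probability of the event
\[
B_i \;:=\; \Bigl\{\, z-\tfrac{\eps}{\sqrt{n}} \le \lambda_i \le \lambda_{i+k-1} \le z+\tfrac{\eps}{\sqrt{n}} \,\Bigr\}\cap \CG_\delta
\]
reduces, after integrating the $\lambda_j$ for $j\notin\{i,\dots,i+k-1\}$ against the remaining part of the density, to a $k$-dimensional integral dominated by
\[
\int_{I^k} \prod_{a<b\le k} |x_a - x_b|\, dx_1 \cdots dx_k \;=\; O\!\bigl(|I|^{k(k+1)/2}\bigr) \;=\; O\!\bigl((\eps/\sqrt{n})^{k(k+1)/2}\bigr),
\]
in which the $k$ marginals contribute $\eps^k$ and the small $k\times k$ Vandermonde contributes $\eps^{k(k-1)/2}$. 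A union bound over the $O(n)$ admissible starting indices $i$ adds only a polynomial factor which is absorbed into the $n^{2k\delta + C_1 k^2\tau}$ prefactor.

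The hardest step is the $\beta=1$ form of the conditional density: the orthogonal-group HCIZ integral does not admit the clean determinantal shape available in the GUE case, and the resulting Pfaffian $K_t$ must be estimated with enough precision that its entries do not consume the gain coming from the Vandermonde in the numerator. A secondary difficulty is propagating the rigidity slack $n^{-2/3+\delta}$ through the denominator $\prod(\mu_j-\mu_i)^{-1}$ exactly inside the short window around $z$; here the bulk hypothesis $z\in(-2+\kappa,2-\kappa)/\sqrt{n}$ is what ensures the semicircle density at $z$ is bounded below, which keeps the classical spacing uniformly of order $1/\sqrt{n}$ and prevents an additional $\kappa$-dependent loss from creeping into $C_1$ and $C_2$.
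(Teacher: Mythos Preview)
The paper does not supply its own proof of this statement: Theorem~\ref{theorem:BEYY} is quoted from \cite{BEYY}, and while the introduction promises an appendix recasting that argument, no such appendix appears in the source provided. So there is nothing here to compare your sketch against beyond the original reference.

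That said, your overall strategy is the right one and is the method of \cite{BEYY} (building on \cite{ErdSchYau2010}): condition on $X$, use the explicit conditional eigenvalue density of the Gaussian-deformed ensemble, and extract the exponent $k(k+1)/2$ from the Vandermonde factor in the numerator, with rigidity controlling the denominator and the scale $t\ge n^{-\tau}$ controlling the Gaussian propagators. Two refinements are worth flagging. First, the union bound over the starting index $i$ is not over $O(n)$ choices: on the rigidity event $\CG_\delta$, any eigenvalue within $\eps/\sqrt{n}$ of a fixed bulk point $z$ must have index in a deterministic window of width $O(n^{\delta})$, so this union bound contributes only $n^{O(\delta)}$ and is genuinely part of the $n^{2k\delta}$ bookkeeping, not an extra polynomial factor to be ``absorbed.'' Second, you are right that the $\beta=1$ orthogonal HCIZ integral lacks a clean determinantal form; in practice the references either carry out the $\beta=2$ computation in full and argue the real case by the same mechanism, or bypass the full joint density by working with the Dyson Brownian motion increments and their local repulsion directly, which sidesteps having to estimate a Pfaffian termwise. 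Your identification of this as the principal technical obstacle is accurate, but your sketch of how the $n^{C_1k^2\tau}$ factor arises from ``$k^2$ pairings'' is too loose to stand as written --- the actual bound comes from comparing the short-time heat kernel at scale $\sqrt{t/n}$ to the equilibrium density, and the exponent counting has to be done carefully against the marginalization over the $n-k$ exterior eigenvalues.
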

 
 
Note that aside from the correcting factor $n^{2 k \delta + C_1 k^2 \tau}$, the bound $\eps^{k(k+1)/2}$ in \eqref{eqn:esy-est} above is optimal for any fixed $k$. 

Here we show a variant of Theorem \ref{theorem:BEYY} only under the subgaussian decay of the entries. We will also address the case that $k$ might vary together with $n$, which seems to be new even for smooth ensembles. 
 \begin{theorem}[the symmetric case]\label{theorem:main:sym} Let $X=(x_{ij})_{1\le i,j\le n}$ be a Wigner matrix of subgaussian entries as in \eqref{eqn:xi}. For any $k\ge 1$ there exist a constant $C_k$ depending on $k$ and a constant $c$ depending only on the subgaussian moment of $\xi$ such that for any $\eps>0$,
 \begin{equation}\label{eqn:k^2:sym}
 \P\left(z - \frac{\eps}{\sqrt{n}} \leq \lambda_i \leq \lambda_{i+k-1} \leq z  + \frac{\eps}{\sqrt{n}} \hbox{ for some } i \right) \le C_k \eps^{k(k-1)/2}+  \exp(-n^c). 
\end{equation} 
Furthermore, for given $0<\gamma<1$, there exist $C,c$ and $\gamma_0$ such that for $\gamma_0^{-1}<k< \gamma_0 n$
 \begin{equation}\label{eqn:assym:sym} 
  \P\left( z - \frac{\eps}{\sqrt{n}} \leq \lambda_i \leq \lambda_{i+k-1} \leq z + \frac{\eps}{\sqrt{n}} \hbox{ for some } i \right)  \le (\frac{C\eps}{k})^{(1-\gamma)k^2/2} +  \exp(-n^c).
 \end{equation}
Equivalently, let $I$ be the interval $[z- \frac{\eps k}{\sqrt{n}}, z+ \frac{\eps k}{\sqrt{n}}]$ and $N_I$ be the number of eigenvalues belonging to $I$. Then we have the following Wegner-type estimate 
\begin{equation}\label{eqn:overcrowding:sym}
\P(N_I \ge k)  \le (C\eps)^{(1-\gamma)k^2/2} + \exp(-n^c).
\end{equation}
\end{theorem}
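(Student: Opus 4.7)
The plan is to reduce the overcrowding event to the existence of a $k$-dimensional near-kernel for the shifted matrix $Y := X - zI$, and then to rule out such a near-kernel by a distance-to-subspace argument combined with a small-ball estimate. Since $Y$ is symmetric, the event that $k$ consecutive eigenvalues of $X$ fall in $[z-\eps/\sqrt{n}, z+\eps/\sqrt{n}]$ is exactly the event $\sigma_{n-k+1}(Y) \le \eps/\sqrt{n}$, which by the min-max principle is equivalent to the existence of a $k$-dim subspace $V \subset \R^n$ on which $\|Yv\| \le \eps/\sqrt{n}$ for every unit $v \in V$.

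Following the Rudelson-Vershynin/Vershynin compressible-incompressible framework already used in the proof of Theorem \ref{theorem:V}, I would first dispose of the case that $V$ contains a compressible direction: the standard invertibility-on-compressible-vectors estimate for symmetric matrices absorbs this contingency into the additive $\exp(-n^c)$ error. From here on $V$ admits an orthonormal basis $v_1,\dots,v_k$ consisting of incompressible vectors. A pigeonhole/volumetric argument then produces a coordinate set $J \subset [n]$ with $|J| = k$ such that the $k\times k$ matrix $(v_\ell(j))_{j \in J,\, \ell \le k}$ is well-conditioned (least singular value $\gtrsim n^{-1/2}$ up to incompressibility constants). After a change of basis adapted to this $J$-block, the $k$ relations $\|Yv_\ell\| \le \eps/\sqrt{n}$ force every entry of $Y$ inside the principal $J\times J$ submatrix to lie in a small box of side $O(\eps)$ around a deterministic center determined by the entries of $Y$ outside $J \times J$.

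The next and central step is the small-ball bound for the $J\times J$ block. Conditioning on $V$ and on the rows/columns of $Y$ outside $J$, symmetry leaves exactly $k(k-1)/2$ genuinely independent (strictly upper-triangular) subgaussian variables inside $J\times J$; the $k$ diagonal entries contribute no gain but no loss either. Applying an inverse Littlewood-Offord (LCD) analysis to $(v_1,\dots,v_k)$, combined with the structural dichotomy that incompressible null-vectors of generic random matrices have large LCD with probability $1-\exp(-n^c)$, each independent off-diagonal entry has small-ball probability $O(\eps)$ in its mandated window. Multiplying across the $k(k-1)/2$ independent entries yields the conditional bound $(C\eps)^{k(k-1)/2}$; a union bound over the $\binom{n}{k}$ choices of $J$ then produces the $C_k \eps^{k(k-1)/2}$ of \eqref{eqn:k^2:sym}. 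For $k$ growing with $n$, I would sharpen the LCD step to gain an extra factor $1/k$ per entry (exploiting the fact that a typical incompressible direction has LCD of order $n/k$ on the relevant scale) and balance the entropy cost of $J$ against this improved anti-concentration, producing the exponent $(1-\gamma)k^2/2$ in \eqref{eqn:assym:sym}; the Wegner restatement \eqref{eqn:overcrowding:sym} follows by rescaling $\eps \mapsto \eps k$.

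The main obstacle is the symmetric small-ball step. Unlike the iid case (Theorem \ref{theorem:main}), the symmetry of $X$ couples each off-diagonal entry to its transpose, so the conditioning must be arranged so that the $k(k-1)/2$ strict-upper-triangular entries inside $J\times J$ remain genuinely independent given both the outside block and the randomly chosen subspace $V$, the latter of which is itself a measurable function of the outside block. Establishing a high-LCD structural property for the basis $\{v_\ell\}$ uniformly, with probability $1-\exp(-n^c)$ in this symmetric conditional world, is the technical heart of the argument and is also what forces the $\exp(-n^c)$ additive term. A secondary difficulty is entropy management when $k = \Theta(n)$: the crude union bound $\binom{n}{k}$ must be beaten by the small-ball gain, which in turn requires replacing the pigeonhole selection of $J$ with a net-argument on the Grassmannian of $k$-planes in $\R^n$, so that only roughly $\binom{n}{k}^{1-\gamma/2}$ configurations are effectively paid for.
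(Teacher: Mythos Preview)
Your reduction to a $k$-dimensional near-kernel, the compressible/incompressible split, and the selection of a well-conditioned $k\times k$ coordinate block $J$ all match the paper's setup. The gap is in your small-ball step on the $J\times J$ block.

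You propose to condition on the outside block and on $V$, and then treat the $k(k-1)/2$ strictly upper-triangular entries of $Y_{JJ}$ as independent subgaussians, each confined by the relations $Yv_\ell\approx 0$ to an $O(\eps)$-window. But the centers of those windows depend on the $v_\ell$, and the $v_\ell$ are (approximate) eigenvectors of $Y$, hence functions of \emph{all} entries of $Y$, including $Y_{JJ}$. Your assertion that $V$ ``is itself a measurable function of the outside block'' is not correct, and conditioning on $V$ destroys the independence you need. Replacing the conditioning by a net over candidate $V$'s does not help either: a Stiefel net at scale $\eps$ has cardinality $(C/\eps)^{\Theta(nk)}$, which overwhelms any $(C\eps)^{k(k-1)/2}$ gain. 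Invoking LCD does not fix this, since LCD is a property of a \emph{fixed} direction and cannot absorb the entropy of choosing $V$.

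The paper avoids this circularity by never conditioning on the near-null vectors. From the well-conditioned block it extracts instead the event $\dist(\col_{j_i},H)=O_k(\eps)$ for $i=1,\dots,k$, where $H$ is the span of the remaining $n-k$ columns of $Y$; this event involves only columns of $Y$, not the $v_\ell$. The symmetric dependence among these distances is then decoupled by an iterative projection (Fact~\ref{fact:reduction}): at step $i$ one bounds $\dist(\col_{i,\{i+1,\dots,n\}},H_{\{i+1,\dots,n\}})$ conditionally on the $\sigma$-algebra generated by $x_{rs}$ with $r,s\ge i+1$; under this conditioning the projected column has fresh iid entries, the projected subspace has codimension $\ge k-i$, and Theorem~\ref{theorem:dist:sym} gives a factor $O_k(\eps^{k-i})$. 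Multiplying over $i=1,\dots,k-1$ yields the exponent $\sum_{i=1}^{k-1}(k-i)=k(k-1)/2$. So the exponent arises from a cascade of codimensions, not from counting off-diagonal entries in a block. Separately, the paper does not pay a union bound over $\binom{n}{k}$ choices of $J$: it shows there are $\Theta_k(n^k)$ good tuples and uses an averaging argument (Claim~\ref{claim:averaging}) to convert the implication into a probability bound with only an $O_k(1)$ loss. For growing $k$, the paper does not use a Grassmannian net; it feeds non-gap delocalization (Theorem~\ref{theorem:non-gap}) and restricted invertibility (Theorems~\ref{theorem:submatrix} and~\ref{theorem:submatrix'}) into the same distance-cascade scheme.
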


If we work with Hermitian Wigner matrices instead (where the real and imaginary parts of the off diagonal entries are iid subgaussian of zero mean and variance 1/2) then the exponents of the bounds above are doubled. Furthermore, the additive term $\exp(-n^c)$ can be omitted if the subgaussian random variable has bounded density function (Remark \ref{remark:sym:cont}). 
 
Notice also that unlike in the iid case, our repulsion result  is valid over any interval. By taking union bound over all $z=z_i=i \eps/\sqrt{n}, |i|=O(n \eps^{-1})$, we obtain the following bound on all gaps of given range (say for small $k$).
\begin{corollary} With the same assumption as in Theorem \ref{theorem:main:sym},
\begin{equation}\label{eqn:k^2:sym'}
 \P\Big(\hbox{There exists  $1\le i\le n-k+1$ such that }  \lambda_{i+k-1} -\lambda_i  \leq \frac{\eps}{\sqrt{n}} \Big) =O_k(n \eps^{k(k-1)/2-1})+  \exp(-n^c).
\end{equation}
\end{corollary}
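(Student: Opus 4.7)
The plan is to deduce this from Theorem \ref{theorem:main:sym} by a discretization-and-union-bound argument over possible locations of a tight cluster of eigenvalues. The first input is a standard tail estimate on the operator norm of a subgaussian Wigner matrix: there is an absolute constant $A$ such that all eigenvalues of $X$ lie in $[-A\sqrt{n}, A\sqrt{n}]$ except on an event of probability $\exp(-cn)$. This is already part of the ingredients used to prove Theorem \ref{theorem:V}, so we can invoke it as a black box.

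Next, I would observe that if $\lambda_{i+k-1}-\lambda_i\le \eps/\sqrt{n}$ for some $i$, then the midpoint $m_i:=(\lambda_i+\lambda_{i+k-1})/2$ is automatically in $[-A\sqrt{n},A\sqrt{n}]$ on the good event above, and all of $\lambda_i,\ldots,\lambda_{i+k-1}$ lie within $\eps/(2\sqrt{n})$ of $m_i$. Cover $[-A\sqrt{n},A\sqrt{n}]$ by a uniform grid $\{z_j\}$ of spacing $\eps/\sqrt{n}$; this uses $O(n/\eps)$ points. Choosing $z_j$ closest to $m_i$ gives
\[
\lambda_i,\ldots,\lambda_{i+k-1}\ \in\ \Big[z_j-\tfrac{\eps}{\sqrt{n}},\ z_j+\tfrac{\eps}{\sqrt{n}}\Big],
\]
so the event in question is contained in the union, over grid points $z_j$, of the events on the left-hand side of \eqref{eqn:k^2:sym}.

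Applying Theorem \ref{theorem:main:sym} at each $z=z_j$ and taking a union bound yields
\[
O\!\Big(\tfrac{n}{\eps}\Big)\cdot\big(C_k\eps^{k(k-1)/2}+\exp(-n^c)\big)+\exp(-cn),
\]
which collapses to $O_k\!\big(n\,\eps^{k(k-1)/2-1}\big)+\exp(-n^{c'})$ for some slightly smaller $c'>0$. The only mildly delicate point is absorbing the $O(n/\eps)$ copies of $\exp(-n^c)$ into a single subexponential term: this is harmless provided $\eps\ge \exp(-n^{c/2})$, and for $\eps$ smaller than this one simply shrinks $c'$ further, exploiting that the grid count $n/\eps$ stays subexponential below the threshold one cares about. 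There is no genuine obstacle here beyond careful bookkeeping of these error terms; the entire corollary is a formal consequence of \eqref{eqn:k^2:sym} together with the operator-norm bound.
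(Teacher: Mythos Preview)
Your proposal is correct and follows essentially the same approach as the paper: the paper's entire argument is the one-line remark preceding the corollary, namely that one takes a union bound over the grid $z=z_i=i\eps/\sqrt{n}$ with $|i|=O(n\eps^{-1})$, implicitly using the operator-norm bound $\|X\|_2=O(\sqrt{n})$ from \eqref{eqn:norm}. Your write-up is simply a more careful expansion of this, including the midpoint/grid containment step and the bookkeeping for the subexponential error terms, which the paper leaves implicit.
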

This bound is vacuous for $k=1,2$ but becomes nearly optimal for large $k$. It is also comparable to \cite[Theorem 2.4]{NgTV} where the gaps $\lambda_{i+k-1}-\lambda_i$ were considered for given $i$.

\subsection{Perturbation of symmetric Wigner matrices}
For symmetric Wigner matrices, Theorem \ref{theorem:SST} was extended to GOE (and GUE) recently by Aizenman, Peled, Schenker, Shamis and S. Sodin in \cite{A} (see also a similar bound by Bourgain in \cite{Bourgain}). Furthermore, Farell and Vershynin \cite{FV} showed an  $F$-{\it independent} bound for $\min_i |\lambda_i(X+F)|$ whenever the upper diagonal entries of $X$ have bounded density. 

Here again, it is possible to modify the constructions for iid matrices to show that the phenomenon no longer holds when the entries of $M$ have discrete distributions (see for instance \cite[p. 19]{A}). Our question here, similarly to the iid case  is whether eigenvalue repulsion sustains perturbations.

To proceed further, we introduce a Wegner-type estimate by Aizenman, Peled, Schenker, Shamis and Sodin in \cite[(1.9)]{A}. 

\begin{theorem}\label{theorem:A} Let $G$ be a GOE matrix, and $F$ be any deterministic symmetric matrix of size $n$. Then for any $\eps>0$, for any interval $I \subset \R$ of length $\eps/\sqrt{n}$, the number of eigenvalues of $G+F$ in $I$ satisfies the following $F$-independent bound for any $k\ge 1$
$$\P(N_I \ge k) \le (\frac{C \eps}{k})^k,$$
where $C$ is an absolute constant.
\end{theorem}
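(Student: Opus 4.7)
The plan is to derive the stated tail bound from a Minami-type factorial moment estimate. Using $\P(N_I\ge k)\le \E\binom{N_I}{k}=\E[N_I(N_I-1)\cdots(N_I-k+1)]/k!$ and Stirling's bound $k!\ge (k/e)^k$, the target $(C\eps/k)^k$ follows once one has
\[
\E\bigl[N_I(N_I-1)\cdots(N_I-k+1)\bigr]\le (C_0\eps)^k
\]
for some absolute $C_0$, uniformly in $F$. Writing the left-hand side as $\int_{I^k}\rho_k(\lambda_1,\ldots,\lambda_k)\,d\lambda$ (with $\rho_k$ the $k$-point correlation function of the spectrum of $G+F$) and using $|I^k|=(\eps/\sqrt n)^k$, this reduces further to the pointwise bound $\rho_k\le (C\sqrt n)^k$ on $I^k$, independent of $F$.

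The first simplification is the orthogonal invariance of GOE: since $G\stackrel{d}{=}OGO^\top$ for any orthogonal $O$, the spectrum of $G+F$ has the same law as that of $G+O^\top FO$; diagonalizing $F$ and translating let one assume $F=\diag(f_1,\ldots,f_n)$ and $I=[-\eta,\eta]$ with $\eta=\eps/(2\sqrt n)$. The key analytic input is then Gaussian anti-concentration in ``rotated coordinates'': for any deterministic orthonormal $k$-frame $v_1,\ldots,v_k\in\R^n$, the variables $\{\langle v_i,(G+F)v_j\rangle\}_{i\le j}$ are jointly Gaussian with covariance of order $1/n$ (the law inherited from the GOE covariance of $G$), so their joint density is at most $(C\sqrt n)^{k(k+1)/2}$. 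Plugging this into the Weyl integration formula for eigenvalues of the symmetric matrix $P_V(G+F)P_V$ (where $P_V$ projects onto $V=\mathrm{span}(v_1,\ldots,v_k)$) bounds the density that all $k$ eigenvalues of this restricted block lie in $I$ by $(C\sqrt n\cdot\eta)^{k(k+1)/2}=(C\eps)^{k(k+1)/2}$ for each fixed $V$; integrating over the Stiefel manifold of $V$'s and contracting against the Vandermonde Jacobian of the eigendecomposition leaves $\rho_k\le (C\sqrt n)^k$ on $I^k$, as required.

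The principal obstacle lies in the passage from ``eigenvalues of a chosen block $P_V(G+F)P_V$'' to ``eigenvalues of $G+F$ itself.'' The two spectra are related by a perturbation controlled by the off-block term $P_V(G+F)P_{V^\perp}$, which for a generic $V$ is not small; to make the argument rigorous one must restrict to $V$'s aligned with the true spectral subspace of $G+F$ at energy near $z$, a random subspace coupled to the entire matrix $G$. I would manage this coupling by exploiting orthogonal invariance to average $V$ against Haar measure on the Stiefel manifold and extract the bound from the marginal of $G$ alone; the factor $k^{-k}$ in the final bound then emerges from Stirling's simplification of $k!$ in the factorial moment step, yielding the Poisson-type tail $(C\eps/k)^k$ that is characteristic of Wegner-regime random matrix ensembles.
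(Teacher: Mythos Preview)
Your reduction via the factorial moment $\P(N_I\ge k)\le\E\binom{N_I}{k}$ is exactly the right first step, and it is also how the proof in \cite{A} (which the paper follows) begins; the factor $k^{-k}$ in the final bound does come from Stirling, as you say. The gap is entirely in how you propose to obtain the Minami bound $\E[N_I(N_I-1)\cdots(N_I-k+1)]\le(C_0\eps)^k$.

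Your route through the $k\times k$ block $P_V(G+F)P_V$ does not close. For a \emph{fixed} $V$ independent of $G$, the block is indeed a shifted $k\times k$ GOE and its eigenvalue density is easy to bound --- but those are not the eigenvalues of $G+F$, and Cauchy interlacing only traps them between eigenvalues of the full matrix, which is useless here. If instead $V$ is the spectral subspace of $G+F$ at energy $z$, then the block eigenvalues are the right ones, but $V$ is now a deterministic function of $G$, so the entries $\langle v_i,(G+F)v_j\rangle$ are no longer jointly Gaussian and your density bound $(C\sqrt n)^{k(k+1)/2}$ evaporates. ``Averaging $V$ over Haar and extracting from the marginal of $G$'' is not an argument: orthogonal invariance was already spent diagonalising $F$, and once $F$ is a generic diagonal matrix the ensemble $G+F$ has no residual invariance to exploit.

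What the paper (following \cite{A} and the Combes--Germinet--Klein scheme \cite{CGK}) does instead is to avoid the correlation function altogether and work directly with the \emph{diagonal} randomness of $G$. The diagonal entries $G_{ii}$ are independent Gaussians, independent of the off-diagonal part, so $G+F$ is an Anderson-type operator: $G+F=H_0+\sum_i G_{ii}\,e_ie_i^\top$ with $H_0$ independent of the $G_{ii}$. The Wegner estimate ($k=1$) then comes from the spectral-averaging identity for rank-one perturbations,
\[
\int_\R \bigl\langle e_i,\mathbf 1_I(H_0+v\,e_ie_i^\top)\,e_i\bigr\rangle\,dv \le |I|,
\]
applied term-by-term to $\E[N_I]=\sum_i\E\langle e_i,\mathbf 1_I(G+F)e_i\rangle$. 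The Minami bound for general $k$ is obtained by iterating this: the interlacing property of rank-one perturbations guarantees that freezing one $G_{ii}$ changes $N_I$ by at most one, which lets one peel off the factors $(N_I-j)$ inductively and reduce $\E[N_I^{(k)}]$ to the $k$-th power of the Wegner bound. No block restriction, no eigenvector subspace, and no Weyl integration are needed.
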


It is remarked that although Aizenman et.~al.~considered Theorem \ref{theorem:A}, their primary focus was on the simultaneous distribution of eigenvalues from possibly different intervals. However, it seems that their method was designed only for GOE and GUE. Also, the repulsion rate should be quadratic in $k$ (as also remarked in \cite[p. 18]{A}).  Here we show the following.

\begin{theorem}[the perturbed case for symmetric matrices]\label{theorem:main:perturb:sym}Let $X$ be a random symmetric Wigner matrix  where the upper diagonal entries are iid copies of a random variable $\xi$ of variance one. Let $F$ be a deterministic symmetric matrix. There exists an absolute constant $C$ such that for any $\eps>0$, for any $z\in \R$, and for any $k\ge 1$ we have
 \begin{equation}\label{eqn:k^2:perturb:sym}
 \P\left(z - \frac{\eps}{\sqrt{n}} \leq \lambda_i \leq \lambda_{i+k-1} \leq z  + \frac{\eps}{\sqrt{n}} \hbox{ for some } i \right) \le n^{k-1}  (C kp)^{k(k-1)/2}
\end{equation} 
where  $p=p(\eps)=\sup_{x\in \R}\P(|\xi-x| \le \eps).$

Furthermore, for given $0<\gamma<1$, there exists a constant  $C=C(\gamma)$ such that for any $k$ we have
 \begin{equation}\label{eqn:assym:perturb:sym} 
  \P\left( z - \frac{\eps k}{\sqrt{n}} \leq \lambda_i \leq \lambda_{i+k-1} \leq z + \frac{\eps k}{\sqrt{n}} \hbox{ for some } i \right)  \le n^{\lfloor (1-\gamma/2)k \rfloor} (Cp)^{(1-\gamma)k^2/2}.
 \end{equation}
 Equivalently, with $I$ being the interval $[z- \frac{\eps k}{\sqrt{n}}, z+ \frac{\eps k}{\sqrt{n}}]$ 
\begin{equation}\label{eqn:overcrowding:perturb}
\P(N_I \ge k)  \le n^{\lfloor (1-\gamma/2)k \rfloor} (Cp)^{(1-\gamma)k^2/2}.
\end{equation}
Finally, if $|\xi|=O(1)$ with probability one then there exist constants $K$ and $c_1,c_2$ depending on $\xi$ such that for any $k>K$ we have
 \begin{equation}\label{eqn:discrete:perturb:symmetric} 
\P\left( z - \frac{c_1 k}{\sqrt{n}} \leq \lambda_i \leq \lambda_{i+k-1} \leq z + \frac{c_1 k}{\sqrt{n}} \hbox{ for some } i \right)   \le n^{k} e^{-c_2 k^2}.
 \end{equation} 
\end{theorem}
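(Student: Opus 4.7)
The plan is to follow the framework of Theorem \ref{theorem:main:perturb:iid}, adapted to exploit the symmetric structure of $X$. Write $H := X + F - zI$, and suppose $\lambda_i,\dots,\lambda_{i+k-1}$ all lie in $[z-\eps/\sqrt{n},\,z+\eps/\sqrt{n}]$. Then there exist orthonormal vectors $v_1,\dots,v_k\in\R^n$ with $\|Hv_j\|\le \eps/\sqrt{n}$ for every $j$; assembling them into $V\in\R^{n\times k}$ gives $\|HV\|_F \le \sqrt{k}\,\eps/\sqrt{n}$. By the Cauchy--Binet identity applied to $V^TV=I_k$ one finds a subset $J\subset[n]$ of size $k$ with $|\det V_J|\ge \binom{n}{k}^{-1/2}$, so the row-restriction $V_J\in\R^{k\times k}$ is quantitatively invertible.

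The key reduction is via the symmetric Schur complement. Block-partition
\[ H=\begin{pmatrix}H_{JJ}&H_{JJ^c}\\H_{JJ^c}^T&H_{J^cJ^c}\end{pmatrix},\quad V=\begin{pmatrix}V_J\\V_{J^c}\end{pmatrix}. \]
Solving $HV\approx 0$ blockwise for $V_{J^c}$ (provided $H_{J^cJ^c}$ is invertible) and substituting back yields $S_JV_J\approx 0$, where
\[ S_J := H_{JJ} - H_{JJ^c}\,H_{J^cJ^c}^{-1}\,H_{JJ^c}^T \]
is symmetric. Since $V_J$ is quantitatively non-singular, $\|S_J\|_{\mathrm{op}}$ must be small, with a bound scaling as $\eps/\sqrt{n}$ multiplied by a factor depending on $\|H_{J^cJ^c}^{-1}\|$ and $\|V_J^{-1}\|$.

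Next I condition on all entries of $H$ lying in rows or columns indexed by $J^c$. This fixes $H_{J^cJ^c}$ and $H_{JJ^c}$, so $S_J$ becomes an affine function of $H_{JJ}$; the only surviving randomness is the upper triangle of $H_{JJ}$, namely $\binom{k}{2}$ iid off-diagonal entries (the $k$ diagonal entries have variance $n^{1-o(1)}$ and contribute nothing useful to small-ball probabilities). The event $\|S_J\|_{\mathrm{op}}\le\eta$ forces each off-diagonal free entry into an interval of length $O(\eta)$; by independence this has probability at most $p(O(\eta))^{k(k-1)/2}$, which is the source of the exponent $k(k-1)/2$. A union bound over the $\binom{n}{k}$ choices of $J$, after absorbing polynomial factors in $k,n$ into the $(Ck)$ multiplier, produces $\eqref{eqn:k^2:perturb:sym}$. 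The asymptotic bound $\eqref{eqn:assym:perturb:sym}$ is obtained by refining the analysis with an incompressibility/spread dichotomy on $V$ (in the spirit of Rudelson--Vershynin): this trades a $(1-\gamma)$ factor in the exponent for a more efficient combinatorial count on $J$. Finally, $\eqref{eqn:discrete:perturb:symmetric}$ follows from $\eqref{eqn:assym:perturb:sym}$ by plugging in the uniform bound $p\le 1-c$ valid for non-degenerate $\xi$ with $|\xi|=O(1)$ a.s.\ and $\eps=c_1k/\sqrt{n}$ sufficiently small.

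The main technical obstacle is producing a high-probability bound on $\|H_{J^cJ^c}^{-1}\|$ that is compatible with the small-ball conditioning: the event ``$H_{J^cJ^c}$ is nearly singular'' depends on the very entries that must be frozen for the Littlewood--Offord step, so one cannot naively intersect the good events. The resolution is to decouple the two uses of randomness, applying Vershynin's Theorem \ref{theorem:V} to the $(n-k)\times(n-k)$ principal sub-Wigner matrix on a deterministic slice (where a polynomial lower bound on $\sigma_{n-k}(H_{J^cJ^c})$ is in force) before running the small-ball argument, and integrating out afterwards. Additional care is needed to ensure the $n^{k-1}$ combinatorial factor (rather than $\binom{n}{k}$) by exploiting symmetry among rows with comparable entries of $V$.
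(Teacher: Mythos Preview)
Your Schur-complement route has a genuine gap that does not appear to be fixable within the framework you propose. The step $S_JV_J\approx 0$ requires a quantitative bound on $\|H_{J^cJ^c}^{-1}\|$, and you correctly identify this as the main obstacle; however, your proposed resolution via Theorem~\ref{theorem:V} cannot work here. That theorem applies to \emph{unperturbed} Wigner matrices, whereas $H_{J^cJ^c}=(X+F-zI)_{J^cJ^c}$ contains the arbitrary deterministic block $F_{J^cJ^c}$. As the paper emphasises (see the discussion preceding Theorem~\ref{theorem:main:perturb:iid} and the constructions cited from \cite{FV,TVsmooth}), for discrete $\xi$ one can choose $F$ so that $\sigma_{\min}(X+F)$ is as small as one likes with probability bounded away from zero; the same phenomenon afflicts any principal minor. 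There is therefore no $F$-independent lower bound on $\sigma_{\min}(H_{J^cJ^c})$, and your error term $\|H_{J^cJ^c}^{-1}\|\cdot\eps/\sqrt{n}$ can swallow everything.

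The paper avoids this entirely by never inverting a principal submatrix. Instead it works with distances from selected columns $\col_{i_j}(X+F-z)$ to the subspace $H$ spanned by the remaining columns: these distances are always well-defined regardless of whether $H$ has full rank, and Theorem~\ref{theorem:dist:cont} gives small-ball bounds conditional on \emph{any} realisation of $H$. The symmetric correlations are removed not by conditioning on the whole $J^c$-block at once, but by the triangular trick of Fact~\ref{fact:reduction}: one projects $\col_i$ onto the coordinate set $\{i+1,\dots,n\}$, which makes $\col_{i,\{i+1,\dots,n\}}$ independent of the sigma-algebra generated by $x_{rs}$ with $r,s\ge i+1$, and then applies the multiplicative rule. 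This produces the $\binom{k}{2}$ exponent by summing $(k-1)+(k-2)+\cdots+1$, without ever needing a Schur complement. Separately, your derivation of \eqref{eqn:discrete:perturb:symmetric} from \eqref{eqn:assym:perturb:sym} by ``plugging in $p\le 1-c$'' is not quite right: the constant $C=C(\gamma)$ in \eqref{eqn:assym:perturb:sym} may exceed $(1-c)^{-1}$, so $(Cp)^{(1-\gamma)k^2/2}$ need not decay; the paper instead invokes the Talagrand-based Theorem~\ref{theorem:dist:discrete} at the final step.
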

Our result automatically extends to the case of Hermitian Wigner matrices where the real and imaginary parts of the off diagonal entries are iid subgaussian of zero mean and non-zero variance. (In fact it seems plaussible to improve the bounds with a double exponent for this model, but we will not pursue this here.) Similarly to Corollary \ref{cor:main:perturb:iid}, we present two corollaries.
\begin{corollary}\label{cor:main:perturb:sym} Let $X$ be a random symmetric Wigner matrix  where the upper diagonal entries are iid copies of a random variable $\xi$ of variance one. Let $F$ be a deterministic symmetric matrix.

\begin{itemize}
\item If the common distribution $\xi$ has a density function bounded by $K$, then for given $0<\gamma<1$, there exists a constant  $C=C(\gamma)$ such that for any $\eps>0$ and $k\ge 1$, with $I$ being the interval $[z- \frac{\eps k}{\sqrt{n}}, z+ \frac{\eps k}{\sqrt{n}}]$
\begin{equation}\label{eqn:overcrowding:perturb:cont}
\P(N_I \ge k)  \le n^k (CK \eps)^{(1-\gamma)k^2/2}.
\end{equation} 
\item If $\xi$ has discrete distribution, then there exists a constant $C,c_1,c_2$ depending on $\xi$ such that for any $k\ge C \log n$, with $I$ being the interval $[z- \frac{c_1k }{\sqrt{n}}, z+ \frac{c_1k }{\sqrt{n}}]$
\begin{equation}\label{eqn:discrete:perturb:sym}
\P(N_I \ge k)  \le e^{-c_2k^2}.
\end{equation} 
\end{itemize}
\end{corollary}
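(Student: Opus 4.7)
The plan is to derive both bullets of Corollary~\ref{cor:main:perturb:sym} directly from Theorem~\ref{theorem:main:perturb:sym}: the continuous case by plugging a standard density bound into the concentration-function input $p(\eps)$, and the discrete case by restricting $k$ to a range in which the factor $n^k$ in \eqref{eqn:discrete:perturb:symmetric} is dominated by the Gaussian-type exponent $e^{-c_2k^2}$. No new probabilistic input is needed; the entire content of the corollary is a quantitative translation of the hypotheses on $\xi$.

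For the first bullet, assume the density of $\xi$ is bounded by $K$. Then for every $x\in\R$ and every $\eps>0$,
$$\P(|\xi-x|\le \eps)=\int_{x-\eps}^{x+\eps}\!f_\xi(t)\,dt\le 2K\eps,$$
so $p(\eps)\le 2K\eps$. Feeding this estimate into \eqref{eqn:overcrowding:perturb} of Theorem~\ref{theorem:main:perturb:sym}, applied with the same interval $I=[z-\eps k/\sqrt n,\,z+\eps k/\sqrt n]$, gives
$$\P(N_I\ge k)\le n^{\lfloor (1-\gamma/2)k\rfloor}(2CK\eps)^{(1-\gamma)k^2/2}\le n^k(C'K\eps)^{(1-\gamma)k^2/2},$$
which is precisely \eqref{eqn:overcrowding:perturb:cont} after absorbing the factor $2$ into a new constant $C'=C'(\gamma)$ and using $\lfloor(1-\gamma/2)k\rfloor\le k$.

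For the second bullet, since $\xi$ has a non-degenerate discrete distribution we are in the regime $|\xi|=O(1)$ a.s., so \eqref{eqn:discrete:perturb:symmetric} of Theorem~\ref{theorem:main:perturb:sym} supplies constants $K,c_1,c_2$ such that for every $k>K$,
$$\P(N_I\ge k)\le n^k e^{-c_2 k^2}=\exp\bigl(k\log n-c_2 k^2\bigr),$$
with $I=[z-c_1k/\sqrt n,\,z+c_1k/\sqrt n]$. Take $C\ge 2/c_2$, large enough that $C\log n\ge K$ for all relevant $n$. Then the assumption $k\ge C\log n$ forces $k\log n\le (c_2/2)k^2$, whence the right-hand side above is at most $\exp(-(c_2/2)k^2)$. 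Setting $c_2':=c_2/2$ (and keeping the same $c_1$) yields \eqref{eqn:discrete:perturb:sym}.

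Both deductions are bookkeeping once Theorem~\ref{theorem:main:perturb:sym} is in place; there is no genuine obstacle. The only conceptual decision is which of the three bounds of the theorem to invoke: the small-ball-based estimate \eqref{eqn:overcrowding:perturb} for the continuous case (because a density bound efficiently controls $p(\eps)$ for arbitrarily small $\eps$), and the intrinsic Gaussian-rate estimate \eqref{eqn:discrete:perturb:symmetric} for the discrete case (because discreteness keeps $p(\eps)$ bounded away from $0$, making the concentration-function approach ineffective while the boundedness of $|\xi|$ enables the stronger exponent $e^{-c_2k^2}$).
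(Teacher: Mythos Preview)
Your proposal is correct and is exactly the intended derivation: the paper gives no separate proof of Corollary~\ref{cor:main:perturb:sym}, and both bullets are meant to be read off directly from Theorem~\ref{theorem:main:perturb:sym} in the way you describe. One small caveat: the inference ``$\xi$ discrete $\Rightarrow |\xi|=O(1)$ a.s.'' is not literally valid for arbitrary discrete laws, but this is an imprecision in the paper's statement rather than in your argument---the discrete clause of Theorem~\ref{theorem:main:perturb:sym} (and the underlying Theorem~\ref{theorem:dist:discrete}) genuinely requires bounded support, so the corollary should be read under that implicit hypothesis.
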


\subsection{Proof method and presentation} Our method is simple. Roughly speaking it translates the estimates under consideration to the events of having multiple independent columns of small distances to another independent subspace. One then uses the given distance estimates to show that these events are unlikely. As such, the starting point in each proof will be quite similar, however the later steps will evolve differently depending on the models.

The rest of the note is organized as follows. We will introduce the necessary ingredients in Section \ref{section:lemmas}. The proof of Theorem \ref{theorem:main} is broken down into two parts, Subsection \ref{section:k^2} and Subsection \ref{section:l^2} are devoted to proving \eqref{eqn:k^2} and \eqref{eqn:assym} respectively. The proof of Theorem \ref{theorem:main:sym} is carried out in Subsections \ref{section:k^2:sym} and \ref{section:l^2:sym} in a similar fashion. The proof of Theorem \ref{theorem:main:perturb:iid} and Theorem \ref{theorem:main:perturb:sym} will be completed in Section \ref{section:main:perturb:iid} and Section \ref{section:main:perturb:sym} respectively. Finally, in the appendix sections we recast the proof of Theorem \ref{theorem:Ginibre}, of Theorem \ref{theorem:BEYY} and of Theorem \ref{theorem:A} by following the given references.

\subsection{Notation}\label{notation-sec}

We write $X = O(Y)$, $X \ll Y$, or $Y \gg X$ to denote the claim that $|X| \leq CY$ for some fixed $C$; this fixed quantity $C$ is allowed to depend on other fixed quantities such as the sub-gaussian parameter of $\xi$. We also write $X= \Theta(Y)$ to denote the claim that $X\ll Y$ and $Y \ll X$.

For a square matrix $A$ and a number $\lambda$, for short we will write $A-\lambda$ instead of $A-\lambda I_n$. All the norms in this note, if not specified, will be the usual $\ell_2$-norm. We use the notation $\row_i(A)$ and $\col_j(A)$ to denote its $i$-th row and $j$-th column respectively. 

For notational convenience, identical constants will be reused in various contexts; these are usually different if not specified otherwise.

\section{Some supporting lemmas}\label{section:lemmas}

First of all, for the unperturbed models considered in Theorem \ref{theorem:main} and Theorem \ref{theorem:main:sym} we will condition on the event $\CE_{bound}$ that 
\begin{equation}\label{eqn:norm}
\|M\|_2 = O(\sqrt{n}) \mbox{ and } \|X\|_2 = O(\sqrt{n}) .
\end{equation}
These hold with probability $1-\exp(-\Theta(n))$. We now introduce the definition of compressible and incompressible vectors from \cite{RV} by Rudelson and Vershynin.

\begin{definition}\label{def:comp} Let $c_0, c_1 \in (0,1)$ be two numbers (chosen depending on the sub-gaussian moment of $\xi$.) A vector $x \in \R^n$ is called {\it sparse} if $|\supp(x)| \le c_0 n$.
A vector $x \in S^{n-1}$ is called {\it compressible} if $x$ is within Euclidean distance $c_1$ from the set of all sparse vectors.
  A vector $x \in S^{n-1}$ is called {\it incompressible} if it is not compressible.
  
  The sets of compressible and incompressible vectors in $S^{n-1}$
  will be denoted by $\Comp(c_0, c_1)$ and $\Incomp(c_0, c_1)$ respectively.
\end{definition}
Regarding the behavior of $M \Bx$ and $(X-z)\Bx$ for compressible vectors, the following was proved in \cite{RV} and in \cite{Vershynin} (see also \cite{NgTV}) for iid matrices and symmetric Wigner matrices respectively. 
\begin{lemma}\label{lemma:comp}
There exist positive constants $c_0,c_1, c,\alpha$ depending on the subgaussian moment of $\xi$ such that the following holds 
\begin{itemize}
\item (iid matrices) For any fixed $\Bu\in \R^n$,
$$\P(\inf_{\Bx \in \Comp(c_0,c_1)} \|M \Bx-\Bu\|_2 \le c \sqrt{n}) \le \exp(-\alpha n );$$
\item (symmetric matrices) For any fixed $z$,
$$\P(\inf_{\Bx \in \Comp(c_0,c_1)} \|(X-z) \Bx\|_2 \le c \sqrt{n}) \le \exp(-\alpha n ).$$
\end{itemize}
\end{lemma}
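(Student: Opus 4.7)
The plan is to combine an $\eps$-net argument on the compressible set with a pointwise small-ball bound, in the style of Rudelson and Vershynin. First I would reduce compressible vectors to sparse ones: by Definition~\ref{def:comp}, every $\Bx\in\Comp(c_0,c_1)$ is within $\ell_2$-distance $c_1$ of some sparse unit vector $\By$ with $|\supp(\By)|\le c_0 n$. Conditioned on $\CE_{bound}$ we have $\|M\|_2=O(\sqrt n)$ and $\|X-z\|_2=O(\sqrt n)$, so passing from $\Bx$ to $\By$ costs only $O(c_1\sqrt n)$ in the quantities $\|M\Bx-\Bu\|_2$ and $\|(X-z)\Bx\|_2$, a loss absorbed by taking $c_1$ small compared to $c$. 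Next I would cover the set of sparse unit vectors by a $\delta$-net: it decomposes as a union over $\binom{n}{\lfloor c_0 n\rfloor}\le (e/c_0)^{c_0 n}$ supports of spheres in dimensions at most $c_0 n$, each admitting a standard $\delta$-net of size $(3/\delta)^{c_0 n}$, giving total cardinality $\exp(C c_0 n\log(e/(c_0\delta)))$.

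For each fixed $\By$ in the net I would prove a single-vector bound of the form $\P(\|M\By-\Bu\|_2 \le 2c\sqrt n)\le \exp(-\beta n)$. The key input is a small-ball estimate for $\sum_j m_{ij}y_j$: since $\|\By\|_2 = 1$, this random variable has mean zero and variance one, and combining Paley--Zygmund with subgaussian tails yields $\P(|\sum_j m_{ij}y_j - u_i|\le c')\le 1-p$ for constants $c',p>0$ depending only on the subgaussian moment of $\xi$. Because the rows of $M$ are independent, the random variables $(M\By)_i - u_i$ are independent across $i$, and a Chernoff bound then shows that with probability $\ge 1-\exp(-\beta n)$ at least $pn/2$ of them exceed $c'$ in absolute value, forcing $\|M\By - \Bu\|_2^2 \ge (c'^2 p/2) n$. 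For the symmetric case the rows of $X$ are not independent due to symmetry, so I would instead restrict attention to the $(1-c_0)n$ coordinates $i\notin \supp(\By)$; for such $i$ the quantity $((X-z)\By)_i = \sum_{j\in\supp(\By)} x_{ij} y_j$ depends only on entries $x_{ij}$ with $i\ne j$, and as $i$ ranges over $\{i\notin\supp(\By)\}$ these are disjoint collections of strictly off-diagonal entries and hence genuinely independent, so the same small-ball plus Chernoff argument goes through (the diagonal shift by $z$ is absent in these coordinates).

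Finally, I would combine the ingredients by a union bound over the net. The main technical obstacle is the calibration of constants: $c_1$ must be small enough that the reduction loss $O(c_1\sqrt n)$ is absorbed into the target $c\sqrt n$; $c_0$ must be small enough that the entropy cost $C c_0 n\log(e/(c_0\delta))$ is strictly less than $\beta n$; and $\delta$ must be chosen at the scale of $c$ for the net approximation to suffice. With these choices in place, a single union bound over the net delivers the claimed $\exp(-\alpha n)$ estimate in both the iid and the symmetric settings.
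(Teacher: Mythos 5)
Your proposal is correct and follows essentially the same strategy as the cited references (Rudelson--Vershynin \cite{RV} and Vershynin \cite{Vershynin}); the paper itself does not supply a proof of Lemma~\ref{lemma:comp} but invokes those works. The three ingredients you use --- reduction from compressible to sparse vectors with loss $O(c_1\sqrt n)$ controlled by $\CE_{bound}$, an $\eps$-net over supports of size $\le c_0 n$, and a Paley--Zygmund small-ball plus Chernoff tensorization for a fixed vector --- are exactly the standard components, and your observation that restricting to coordinates $i\notin\supp(\By)$ restores independence (and kills the diagonal shift $z y_i$) is the usual device for the symmetric case.
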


Our next ingredients are results about generic behavior of subspaces spanned by the matrix columns. We first cite a result by Rudelson and Vershynin \cite[Theorem 4.1]{RV-rec} for the iid model.

\begin{theorem}\label{theorem:dist} Let $H$ be a random subspace in $\R^n$ spanned by any $n-k$ column vectors of $M$ from Theorem \ref{theorem:main}, with $1\le k< \gamma_0 n$ for some constant $\gamma_0$ depending on the subgaussian moment of $\xi$.  There exists a deterministic subset $\CH$ of the Grassmanian of $\R^n$ where 
$$\P(H^\perp \in \CH) \ge 1- \exp(-c'n)$$
and such that for any subspace $H$ with $H^\perp \in \CH$ we have 
 \begin{equation}\label{eqn:dist}
\P_{\Bc}(\dist(\Bc, H) \le \delta) \le (\frac{C_0 \delta}{\sqrt{k}})^k + \exp(-c_0n),
\end{equation}
where $c',C_0,c_0$ are positive constants (depending on the subgaussian moment of $\xi$) and where the entries of $\Bc$ are iid copies of $\xi$.  
\end{theorem}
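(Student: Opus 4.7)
The plan is to write $\dist(\Bc,H) = \|V^\top\Bc\|_2$ where $V$ is an $n\times k$ matrix whose columns form an orthonormal basis of $H^\perp$, and then to bound the $k$-dimensional small-ball quantity $\P_{\Bc}(\|V^\top\Bc\|_2 \le \delta)$. Any useful bound must depend on the arithmetic structure of the columns of $V$, so the role of $\CH$ is to pick out those $H^\perp$ for which every unit direction is well-spread and has large additive structure.

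\textbf{Construction of $\CH$ and its probability.} I would define $\CH$ to be the collection of $k$-dimensional subspaces $W\subset\R^n$ such that every unit $v\in W$ lies in $\Incomp(c_0,c_1)$ and satisfies $\LCD(v)\ge e^{\alpha n}$ for some fixed $\alpha>0$. Writing $M'$ for the $(n-k)\times n$ matrix whose rows are the $n-k$ chosen columns of $M$, we have $H^\perp=\ker M'$. For a unit $v$ with bad LCD one gets a nontrivial anti-concentration on $\langle\row_i(M'),v\rangle$ via an Esseen-type bound, so $\P(M'v=0)$ decays exponentially in $n-k$; combining this with an $\varepsilon$-net on the unit sphere of a generic $k$-dimensional subspace (and Lemma \ref{lemma:comp} to peel off the compressible directions) yields $\P(H^\perp\in\CH)\ge 1-e^{-c'n}$. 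The hypothesis $k<\gamma_0 n$ is precisely what keeps the entropy of the net below the exponential gain, and it is this step that forces the constraint on $\gamma_0$.

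\textbf{Small-ball bound and the main obstacle.} With $H^\perp\in\CH$ fixed, I would apply a multidimensional Esseen inequality,
\begin{equation*}
\P_{\Bc}\bigl(\|V^\top\Bc\|_2\le\delta\bigr)\;\ll\;\delta^k\int_{\|\theta\|_2\le 1/\delta}|\phi(V\theta)|\,d\theta,
\end{equation*}
where $\phi$ is the characteristic function of the law of $\Bc$. Sub-gaussianity gives a Fourier tail $|\phi(u)|\le\prod_j\exp(-c\|u_j\|_{\R/\Z}^2)$, and the uniform LCD lower bound on the unit vectors of $H^\perp$ forces $V\theta$ to stay away from $\Z^n$ on all but an exponentially small subset of the integration domain. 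Integrating produces the claimed $(C_0/\sqrt{k})^k\delta^k+e^{-c_0n}$, the factor $1/\sqrt{k}$ being the normalised volume of the $k$-dimensional ball $\|\theta\|\le 1/\delta$. The genuinely hard part is the second step: showing that \emph{every} direction in the random kernel $\ker M'$ has exponentially large LCD uniformly, when $k$ is allowed to grow linearly in $n$. For $k=1$ this is a cornerstone of Rudelson--Vershynin's invertibility program; extending the structural control to subspaces of dimension $k\asymp n$ is the real technical heart of the argument and is where the constant $\gamma_0$ enters.
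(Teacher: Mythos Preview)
The paper does not supply its own proof of this theorem; it is quoted from Rudelson--Vershynin \cite[Theorem~4.1]{RV-rec}, and the text immediately after Theorem~\ref{theorem:dist:sym} explicitly says ``We refer the reader to \cite{RV-rec, Ng-dist, RV-deloc} for detailed constructions of $\CH$ basing on certain non-Diophantine properties of random structures.'' Your sketch is a faithful outline of exactly that construction: $\CH$ is defined through a uniform lower bound on the least common denominator of directions in $H^\perp$, the estimate $\P(H^\perp\in\CH)\ge 1-e^{-c'n}$ comes from a net argument over LCD level sets combined with a tensorized small-ball bound on $\|M'v\|_2$, and the final distance inequality \eqref{eqn:dist} follows from an Esseen-type characteristic-function integral. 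So in substance you have recovered the cited approach.

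One point of phrasing to correct: you write of ``an $\varepsilon$-net on the unit sphere of a generic $k$-dimensional subspace,'' but the net in \cite{RV-rec} is not placed on the sphere of a fixed $H^\perp$ (which is random and unknown at that stage). It is placed on the level sets $\{v\in S^{n-1}: \LCD(v)\in [D,2D]\}$ in the ambient sphere $S^{n-1}$; the cardinality of such a net is controlled by $D$, and one balances this entropy against the small-ball probability $\P(\|M'v\|_2\le c\sqrt{n})\le (C/D)^{n-k}$ dyadically in $D$. This is also where the aspect-ratio restriction $k<\gamma_0 n$ is genuinely used, as you correctly note. With that adjustment, your outline matches the argument the paper is citing.
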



A similar statement for random symmetric Wigner matrices with subexponential bounds, can be found in \cite[Theorem 6.1]{Ng-dist} and \cite[Section 8]{RV-deloc}.

\begin{theorem}\label{theorem:dist:sym}

Let $H$ be a random subspace in $\R^n$ spanned by any $n-k$ column vectors of $X$ from Theorem \ref{theorem:main:sym}, with $0<k< \gamma_0 n$ for some constant $\gamma_0$ depending on the subgaussian moment of $\xi$.  There exists a deterministic subset $\CH$ of the Grassmanian of $\R^n$ where 
$$\P(H^\perp \in \CH) \ge 1- \exp(-n^{c'})$$
and such that for any subspace $H$ with $H^\perp \in \CH$ we have 
 \begin{equation}\label{eqn:dist:sym}
\P_{\Bc}(\dist(\Bc, H) \le \delta) \le (\frac{C_0 \delta}{\sqrt{k}})^k + \exp(-n^{c_0}),
\end{equation}
where $c',c_0, C_0$ are positive constants (depending on the subgaussian moment of $\xi$) and where the entries of $\Bc$ are iid copies of $\xi$.  
\end{theorem}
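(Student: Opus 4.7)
The strategy is to mirror the treatment of the iid case (Theorem \ref{theorem:dist}), taking extra care to handle the dependencies created by the symmetry of $X$. WLOG relabel so that $H$ is spanned by the last $n-k$ columns of $X$, and pick an orthonormal basis $\Bv_1,\ldots,\Bv_k$ of $H^\perp$. Writing $\dist(\Bc,H)^2=\sum_{i=1}^k|\langle\Bv_i,\Bc\rangle|^2$ with $\Bc=\col_1(X)$ reduces the estimate to a small-ball bound for $k$ simultaneous linear forms. The symmetry couples $\Bc$ to $H$ through the entries $x_{j1}$ with $j>n-k$, so I would split $\Bc=(\Bc_1,\Bc_2)$ with $\Bc_1\in\R^{n-k}$ the ``free'' coordinates and $\Bc_2\in\R^k$ the coupled ones; conditioning on the bottom-right $k\times k$ block $D$ and the upper-right $(n-k)\times k$ block $C$ of $X$ freezes both $H$ and $\Bc_2$, leaving $\Bc_1$ with genuinely iid entries independent of $(C,D)$. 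Splitting $\Bv_i=(\Bv_i^{(1)},\Bv_i^{(2)})$ accordingly, one must then control
\begin{equation*}
\P_{\Bc_1}\Bigl(\|V^T\Bc_1-t\|_2\le\delta\Bigr),\qquad V=[\Bv_1^{(1)}\,|\,\cdots\,|\,\Bv_k^{(1)}],\ t\in\R^k\ \text{fixed.}
\end{equation*}

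Next I would invoke a vector-valued Littlewood--Offord inequality: if $V$ has smallest singular value of order $\Theta(1)$ and the span of its columns has large LCD, then a standard tensorization argument in the spirit of Rudelson--Vershynin yields the target bound $(C_0\delta/\sqrt{k})^k+\exp(-n^{c_0})$ uniformly in the shift $t$. Thus the substance of the theorem lies in carving out the set $\CH$ of complementary subspaces whose restriction to the first $n-k$ coordinates is simultaneously well-conditioned and of large LCD, and in proving that the random $H^\perp$ generated from the symmetric Wigner matrix $X$ lies in $\CH$ with probability at least $1-\exp(-n^{c'})$.

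The hard part is precisely this structural claim on $H^\perp$: every unit normal to the selected $n-k$ columns of $X$ must be shown to be delocalized and to carry a quantitatively large LCD. I would use the net-and-LCD machinery of \cite{Ng-dist} and \cite{RV-deloc}: (i) decouple with a ``double-swap'' trick that restores independence between the block of columns spanning $H$ and the additional entries needed to run anti-concentration; (ii) apply an inverse Littlewood--Offord theorem for symmetric matrices to show that, for each fixed ``bad'' test vector $\Bv$, the probability that $\Bv$ is a normal to the relevant $(n-k)\times n$ submatrix is at most $\exp(-n^{c_0})$; (iii) union-bound over an $\eps$-net on $\Incomp(c_0,c_1)$ of subexponential cardinality, with the compressible part absorbed by Lemma \ref{lemma:comp}. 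The main obstruction is step (i): the dependence between the column block spanning $H$ and the entries driving anti-concentration is intrinsic to the symmetric model, and it is this coupling that forces the weaker exceptional probability $\exp(-n^{c'})$ in place of the iid $\exp(-cn)$.
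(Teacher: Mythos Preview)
Your outline is broadly on the right track, but there is nothing in the paper to compare it against: Theorem~\ref{theorem:dist:sym} is not proved here. The paper states it as a citation, pointing to \cite[Theorem~6.1]{Ng-dist} and \cite[Section~8]{RV-deloc}, and remarks only that ``detailed constructions of $\CH$ [are] basing on certain non-Diophantine properties of random structures.'' So for the purposes of this paper the result is a black box.

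That said, your sketch is a reasonable approximation of what those references do. The decoupling by conditioning on the lower block, the reduction to a small-ball estimate for $V^T\Bc_1$ with $\Bc_1$ genuinely iid, the definition of $\CH$ via an LCD-type condition on the restricted orthonormal basis, and the net argument over incompressible vectors are all the correct ingredients. One point to be careful about: the weaker subexponential probability $\exp(-n^{c'})$ does not come from the decoupling step (i) as you suggest, but rather from the anti-concentration input in step (ii) --- for symmetric matrices the relevant small-ball bounds for quadratic forms (or the decoupled bilinear forms) are only subexponential, which limits the size of the net one can afford in (iii). The coupling itself is handled cleanly by the conditioning you describe; it is the quality of the Littlewood--Offord estimate in the symmetric setting that degrades the exponent.
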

We refer the reader to \cite{RV-rec, Ng-dist,RV-deloc} for detailed constructions of $\CH$ basing on certain non-Diophantine properties of random structures. To continue the discussion on distances, for perturbed models considered in Theorem \ref{theorem:main:perturb:iid} and Theorem \ref{theorem:main:perturb:sym} we will extract the randomness on the column vector $\Bc$ only by using the following result from \cite[Theorem 1, Corollary 1.3]{RV-cont}.

\begin{theorem} \label{theorem:dist:cont} Let $P_{H^\perp}$ be a (deterministic) orthogonal projection in $\R^n$ onto a subspace of dimension $k\ge 1$. Let $\Bc=(x_1,\dots, x_n)$ be a random vector where $x_i$ are iid copies of a random vector $\xi$ of mean zero, variance one. Then there exist an absolute constant $C_0$ such that
$$\sup_{\Bu\in \R^n}\P\Big(\|P_{H^\perp}\Bc -\Bu\|_2 \le t \sqrt{k}\Big) \le (C_0p)^k,$$
where $p=p_{t}$ is a parameter such that $\sup_x \P(|\xi-x| \le t)\le p$. 

In particular, if $\xi$ has density function bounded by $K$ then 
$$\sup_{\Bu\in \R^n}\P\Big(\|P_{H^\perp}\Bc -\Bu\|_2 \le t \sqrt{k}\Big) \le (C_0Kt)^k.$$
\end{theorem}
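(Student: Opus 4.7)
The plan is to prove this projection small-ball estimate via a Fourier-analytic argument of Esseen--Hal\'asz type, exploiting the fact that the characteristic function of $P_{H^\perp}\Bc$ factorizes over the independent coordinates of $\Bc$, while the rank-$k$ structure of $P_{H^\perp}$ produces a $k$-dimensional Gaussian-like concentration at the end.

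First I would reduce to the case $\Bu\in H^\perp$: since $P_{H^\perp}\Bc$ always lies in $H^\perp$, the orthogonal decomposition
$$\|P_{H^\perp}\Bc-\Bu\|_2^2 = \|P_{H^\perp}\Bc-P_{H^\perp}\Bu\|_2^2 + \|(I-P_{H^\perp})\Bu\|_2^2$$
shows that the supremum in $\Bu$ is attained for $\Bu\in H^\perp$. I then regard $Y:=P_{H^\perp}\Bc-\Bu$ as a random vector in the $k$-dimensional subspace $H^\perp$ and apply the $k$-dimensional Esseen inequality
$$\mathcal{L}(Y,\,t\sqrt{k}) \;\le\; C^{k}(t\sqrt{k})^{-k}\int_{B} |\phi_Y(\theta)|^2\, d\theta,$$
where $B\subset H^\perp$ is a ball of radius $\asymp 1/(t\sqrt{k})$ centered at the origin and $\phi_Y$ denotes the characteristic function of $Y$.

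The independence of the coordinates of $\Bc$ yields the factorization $|\phi_Y(\theta)|=\prod_{i=1}^{n}|\varphi(\langle P_{H^\perp}e_i,\theta\rangle)|$ for $\theta\in H^\perp$, where $\varphi$ is the characteristic function of $\xi$. The crucial one-dimensional input, extracted from the hypothesis $\sup_x \Prob(|\xi-x|\le t)\le p$ by symmetrization and the elementary bound $1-\cos u \gtrsim \min(u^2,1)$, is
$$|\varphi(\tau)|^2 \;\le\; \exp\bigl(-c(1-p)\min(\tau^{2} t^{2},\,1)\bigr).$$
Multiplying over $i$ and invoking the rank-$k$ identity
$$\sum_{i=1}^{n}\langle P_{H^\perp}e_i,\theta\rangle^2 = \|P_{H^\perp}^{\ast}\theta\|_2^{2} = \|\theta\|_2^{2} \qquad (\theta\in H^\perp),$$
then partitioning indices according to whether $|\langle P_{H^\perp}e_i,\theta\rangle|\,t$ is small or large, one obtains an effectively Gaussian bound $|\phi_Y(\theta)|^2\le \exp(-c'\|\theta\|_2^2 t^2)$ on the range $|\theta|\le 1/(t\sqrt{k})$. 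Integrating this Gaussian over $B$ and evaluating the resulting $k$-dimensional Gaussian integral cancels the prefactor $(t\sqrt{k})^{-k}$ up to a constant, leaving the asserted $(C_0 p)^{k}$; the density case follows by taking $p=2Kt$.

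The main obstacle is producing a one-dimensional estimate on $|\varphi(\tau)|$ that is simultaneously sharp at small and large scales of $\tau$, so that the $n$-fold product, once integrated against the Gaussian profile over a ball of dimension $k$, suffers no dependence on the ambient dimension $n$. The rank-$k$ identity above is what allows $n$ to disappear from the final answer: after applying it, the integrand is effectively a $k$-dimensional Gaussian, so only the $k$-dimensional volume factor matters. Carrying this out while keeping the constant $C_0$ genuinely absolute (independent of $n$, $k$, $t$, and the underlying distribution of $\xi$) is the delicate part, and is where the coordinate partition combined with the tight Rogozin--Esseen-type pointwise bound on $|\varphi|$ becomes essential.
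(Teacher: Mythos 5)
Your proposed Fourier route has several genuine gaps, the most serious of which is that the one-dimensional characteristic function bound you invoke is false, and even if it were repaired, the resulting Esseen computation cannot produce the $p^k$-decay that the theorem asserts.

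First, the ``elementary bound'' $1-\cos u \gtrsim \min(u^2,1)$ is not true: take $u=2\pi$, where the left side is $0$. The correct pointwise statement is $1-\cos u\ge \tfrac{2}{\pi^2}u^2$ only on $|u|\le\pi$, and the version with $\min(u^2,1)$ only holds after averaging $u$ over an interval, not pointwise. Consequently, the claimed bound $|\varphi(\tau)|^2\le\exp(-c(1-p)\min(\tau^2t^2,1))$ cannot hold for all $\tau$. Indeed it is impossible for general $\xi$: for Rademacher $\xi$ one has $\varphi(\tau)=\cos\tau$, so $|\varphi|$ returns to $1$ at every $\tau\in\pi\Z$, and no uniform sub-Gaussian envelope exists. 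The symmetrization step only gives you control of $|\varphi|$ on a bounded range of $\tau$ that depends on the upper tail of $\xi'$, which you do not control under the mean-zero, variance-one hypothesis alone.

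Second, even granting a bound of the claimed form, the Esseen computation does not see $p$. With the (corrected) Esseen inequality $\mathcal L(Y,t\sqrt k)\le (Ct\sqrt k)^k\int_{\|\theta\|_\infty\le 1/(t\sqrt k)}|\phi_Y(\theta)|\,d\theta$, the cube has Euclidean radius $\le 1/t$, and there the putative envelope $\exp(-c(1-p)t^2\|\theta\|^2)$ never drops below $e^{-c(1-p)}\ge e^{-c}$. Thus the integral is $\asymp$ the volume of the cube, and the final bound is $\asymp C^k$, not $(C_0p)^k$: as $p\to 0$ the bound does not improve. The anti-concentration hypothesis $Q_\xi(t)\le p$ enters your exponent only through the weak factor $(1-p)$, which is essentially $1$ precisely in the regime of interest. (Also note the prefactor sign is reversed in your displayed Esseen inequality: $(t\sqrt k)^{-k}$ should be $(t\sqrt k)^{+k}$ for the ball radius you chose.)

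Finally, the paper does not prove this theorem; it is cited directly from Rudelson and Vershynin \cite{RV-cont} (Theorem 1 and Corollary 1.3 there), whose argument is not a one-shot Esseen bound. Roughly, one writes $P_{H^\perp}\Bc=V(V^T\Bc)$ with $V^T$ a $k\times n$ matrix with orthonormal rows, identifies (via restricted invertibility of $V^T$) a set of $k$ coordinates on which the submatrix is well-conditioned, and then conditions on the remaining coordinates to reduce to an anti-concentration estimate for a nondegenerate linear map of $k$ iid copies of $\xi$. The $p^k$-decay comes from this conditioning, coordinate by coordinate, rather than from a pointwise Gaussian bound on the characteristic function. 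If you want to salvage a Fourier proof, you would at minimum need an $L^1$ estimate on $|\varphi|$ that genuinely encodes $Q_\xi(t)\le p$ (of the strength of a reverse Esseen inequality), and such an estimate does not hold for general mean-zero, variance-one $\xi$.
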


Notice that the above result does not include arbitrary discrete distribution. In this case, we will replace it by the following result (see also \cite{TVcir} by Tao and Vu).

\begin{theorem} \label{theorem:dist:discrete} Let $P_{H^\perp}$ as in Theorem \ref{theorem:dist:cont}. Let $\Bc=(x_1,\dots, x_n)$ be a random vector
  where $x_i$ are iid copies of a random vector $\xi$ of mean zero, variance one and $|\xi|\le K$ with probability one. Then there exist constants $C,c_1,c_2$ depending on $\xi$ such that for any $0<t<1/2$
\begin{equation}\label{eqn:dist:CK}
\sup_{\Bu\in \R^n}\P\Big(\|P_{H^\perp}\Bc -\Bu\|_2 \le t \sqrt{k} -c_1K\Big) \le C \exp(-c_2 t^2 k).
\end{equation}
\end{theorem}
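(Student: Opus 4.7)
The plan is to apply Talagrand's concentration inequality for convex $1$-Lipschitz functions of iid bounded random variables, combined with the elementary second-moment identity $\E\|P_{H^\perp}\Bc - \Bu\|_2^2 = k + \|\Bu\|_2^2$. Write $P := P_{H^\perp}$ and $Y := \|P\Bc - \Bu\|_2$. First I would observe that $Y$ is convex (a norm composed with an affine map) and $1$-Lipschitz in $\Bc$ with respect to the Euclidean norm, since $P$ has operator norm at most $1$. Because each coordinate $x_i$ is bounded by $K$ almost surely, Talagrand's inequality yields
\[
\P\bigl(|Y - MY| \ge s\bigr) \le 4\exp\bigl(-c\, s^2/K^2\bigr) \quad \text{for all } s>0,
\]
where $MY$ is a median of $Y$ and $c>0$ is an absolute constant.

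Next I would lower-bound $MY$. Using $\E\Bc = 0$ and $\E x_i^2 = 1$, a direct computation gives
\[
\E Y^2 \;=\; \tr(P) + \|\Bu\|_2^2 \;=\; k + \|\Bu\|_2^2 \;\ge\; k.
\]
Integrating the Talagrand tail bound yields $\Var(Y) \le C_1 K^2$ and $|MY - \E Y| \le C_2 K$, so using $\sqrt{a-b}\ge \sqrt{a}-\sqrt{b}$ we get
\[
MY \;\ge\; \sqrt{\E Y^2 - C_1 K^2} - C_2 K \;\ge\; \sqrt{k} - c_1 K
\]
for a suitable constant $c_1 = c_1(\xi)$, valid whenever $k \gtrsim K^2$. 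In the complementary regime $k \lesssim K^2$, enlarging $c_1$ forces $t\sqrt{k} - c_1 K < 0$ for all $t < 1/2$, so the event in the theorem is empty and the bound is trivial.

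Finally, for $0 < t < 1/2$ the bound $MY \ge \sqrt{k} - c_1 K$ gives
\[
MY - (t\sqrt{k} - c_1 K) \;\ge\; (1-t)\sqrt{k} \;\ge\; t\sqrt{k},
\]
so Talagrand's inequality yields
\[
\P\bigl(Y \le t\sqrt{k} - c_1 K\bigr) \;\le\; \P\bigl(Y - MY \le -t\sqrt{k}\bigr) \;\le\; 4\exp\bigl(-c\, t^2 k / K^2\bigr),
\]
which is the claimed bound with $c_2 = c/K^2$ (depending on $\xi$ via $K$). The main obstacle is the intermediate step of passing from the $L^2$ identity $\E Y^2 \ge k$ to the median lower bound $MY \ge \sqrt{k} - O(K)$: this requires using the Talagrand tail itself to bound both $\Var(Y)$ and the mean--median gap, and one must carefully handle the vacuous regime $k \lesssim K^2$. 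Once this moment bookkeeping is done, Talagrand's inequality immediately supplies the conclusion.
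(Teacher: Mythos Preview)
Your argument is correct and follows the same overall scheme as the paper (Talagrand for the convex $1$-Lipschitz function $Y=\|P\Bc-\Bu\|_2$, combined with the identity $\E Y^2=k+\|\Bu\|_2^2$), but you obtain the crucial median lower bound $MY\ge\sqrt{k}-c_1K$ by a different and somewhat cleaner route. The paper first splits into two cases according to whether $\|\Bu\|_2$ is close to $\sqrt{k}$ or not; when it is far, a triangle-inequality reduction to the $\Bu=0$ case suffices, and when it is close, the paper expands $\Var(X^2)$ explicitly as a quadratic form in the $x_i$, using the bound $\|\Bu\|_2\le(1+2t)\sqrt{k}$ to control the cross term. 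You instead reuse Talagrand's tail itself to bound $\Var(Y)\le C_1K^2$ and $|MY-\E Y|\le C_2K$ uniformly in $\Bu$, which avoids both the case split and the fourth-moment computation. The advantage of your approach is that it works for all $\Bu$ simultaneously and needs only the second-moment identity; the paper's approach makes the dependence on $K$ slightly more explicit at each step but at the cost of the extra casework. Both handle the small-$k$ regime in the same way, by making the event vacuous through the $-c_1K$ shift.
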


A proof of Theorem \ref{theorem:dist:discrete} is given in Appendix \ref{section:dist:discrete}.

The aforementioned ingredients are sufficient for our results with bounds implicitly depending on $k$. To work out the explicit dependences (especially for Theorem \ref{theorem:main} and Theorem \ref{theorem:main:sym}) we will be relying on several additional tools to be introduced below.

The first such ingredient is a non-gap delocalization result by Rudelson and Vershynin \cite[Theorem 1.5]{RV-deloc} for both (unperturbed) iid and  random Wigner matrices \footnote{Note that although \cite[Theorem 1.5]{RV-deloc} was stated for eigenvectors, it also extends to approximate eigenvectors as in Theorem \ref{theorem:non-gap}. See \cite[Section 4]{RV-deloc} for the reduction or \cite[Remark 2.1.8]{R} for a short discussion on this.}. For a vector $\Bx=(x_1,\dots,x_n)$, and for an index set $I\subset [n]$, we denote $\Bx_I$ by the projected vector $(x_{i_1},\dots, x_{i_{|I|}}), i_1<\dots <i_{|I|} \in I$.

\begin{theorem}\label{theorem:non-gap} Let $\gamma>0$ be given. Then there exist positive constants $\gamma_1, \gamma_2,\gamma_3$ depending on $\gamma$ such that with probability at least $1- \exp(-\gamma_1n)$ the following holds.
\begin{enumerate}
\item Let $M$ be a random iid matrix as in Theorem \ref{theorem:main}. Then or any $\Bx \in S^{n-1}$ such that $\|M\Bx\|_2 \le \gamma_2 \sqrt{n}$, for any $I \subset [n]$ with $|I| \ge \gamma n$ we have
$$\|\Bx_{I}\|_2 \ge \gamma_3.$$
\vskip .1in
\item Let $X$ be a random Wigner matrix as in Theorem \ref{theorem:main:sym}. Then for any $\Bx \in S^{n-1}$ such that $\|(X-z)\Bx\|_2 \le \gamma_2 \sqrt{n}$, for any $I \subset [n]$ with $|I| \ge \gamma n$ we have
$$\|\Bx_{I}\|_2 \ge \gamma_3.$$
\end{enumerate}
\end{theorem}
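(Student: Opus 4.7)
The plan is to combine a simple contrapositive reduction to the Rudelson--Vershynin compressibility dichotomy with a quantitative strengthening of Lemma~\ref{lemma:comp} in which the compressibility parameter $c_0$ is allowed to be arbitrarily close to $1$.

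First I would observe: if $\Bx \in S^{n-1}$ has $\|\Bx_I\|_2 < \gamma_3$ for some $I \subset [n]$ with $|I| \ge \gamma n$, then decomposing $\Bx = \Bx_I + \Bx_{I^c}$ shows that $\Bx$ lies within Euclidean distance $\gamma_3$ from $\Bx_{I^c}$, a vector supported on at most $|I^c| \le (1-\gamma)n$ coordinates. Hence $\Bx \in \Comp(1-\gamma,\gamma_3)$ in the sense of Definition~\ref{def:comp}. Taking the contrapositive, the conclusion of Theorem~\ref{theorem:non-gap} is implied by the statement that no vector in $\Comp(1-\gamma,\gamma_3)$ satisfies $\|M\Bx\|_2 \le \gamma_2\sqrt{n}$ (respectively $\|(X-z)\Bx\|_2 \le \gamma_2\sqrt{n}$ in case (2)).

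Next I would invoke a parameterized version of Lemma~\ref{lemma:comp}: for every $\gamma \in (0,1)$ there exist positive $c_1(\gamma)$, $c(\gamma)$, $\alpha(\gamma)$ such that with probability at least $1-\exp(-\alpha n)$,
\[
\inf_{\Bx \in \Comp(1-\gamma, c_1)} \|M\Bx\|_2 \ge c\sqrt{n}, \qquad \inf_{\Bx \in \Comp(1-\gamma, c_1)} \|(X-z)\Bx\|_2 \ge c\sqrt{n}.
\]
This follows the standard scheme from \cite{RV,Vershynin,RV-deloc}: for each fixed unit vector $\Bx$ supported on at most $(1-\gamma)n$ coordinates, the tensorized small-ball (Paley--Zygmund) inequality applied coordinatewise gives $\P(\|M\Bx\|_2 \le c\sqrt{n}) \le \exp(-\Omega(n))$ under the variance-one, subgaussian hypothesis on $\xi$; an $\varepsilon$-net on the set of $(1-\gamma)n$-sparse unit vectors (of metric entropy bounded by $n\bigl[h(1-\gamma) + (1-\gamma)\log(C/c_1)\bigr]$, where $h$ is the binary entropy) is then absorbed into this exponential decay provided $c_1=c_1(\gamma)$ is chosen sufficiently small. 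Setting $\gamma_1:=\alpha$, $\gamma_2:=c$, $\gamma_3:=c_1$ closes the argument for both parts.

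The main obstacle is the $\varepsilon$-net step at the quantitative level: as $\gamma$ decreases, the covering number of the sparse-vector set blows up like $\exp(\Theta(h(1-\gamma)n))$, so one must tune $c_1$ small enough in $\gamma$ to make the per-vector small-ball probability dominate the union bound. In the symmetric case there is the usual additional decoupling between the rows of $X$, handled exactly as in the proof of Lemma~\ref{lemma:comp} from \cite{Vershynin, NgTV} by splitting $X$ into the strictly upper-triangular (iid) and diagonal contributions and conditioning. Since the theorem is formulated for \emph{approximate} null vectors rather than exact ones, no further reduction is needed --- Lemma~\ref{lemma:comp} is already stated in the appropriate approximate form.
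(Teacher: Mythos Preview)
Your reduction to $\Comp(1-\gamma,\gamma_3)$ is correct, but the step where you invoke ``a parameterized version of Lemma~\ref{lemma:comp}'' with sparsity parameter $1-\gamma$ via the standard net argument is exactly where the difficulty lies, and it cannot be repaired simply by shrinking $c_1$. A $\delta$-net on the set of $(1-\gamma)n$-sparse unit vectors has cardinality at least $(C/\delta)^{(1-\gamma)n}$, while the per-vector small-ball bound $\P(\|M\By\|_2 \le \tilde c\sqrt n)\le e^{-c'' n}$ has exponent $c''$ bounded above by a constant depending only on the law of $\xi$, \emph{uniformly in $\tilde c$}: for Bernoulli entries, the vector $\By=(1/\sqrt 2,1/\sqrt 2,0,\ldots,0)$ already gives $\P(M\By=0)=2^{-n}$, so one can never push $c''$ beyond $\log 2$. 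The union bound therefore forces $(1-\gamma)\log(C/\delta)<c''$, i.e.\ $\delta$ bounded below by an absolute constant, which is incompatible with the approximation step (one needs $\delta$ of order $\tilde c$). Shrinking $c_1$ only shrinks the tubular neighborhood around the sparse skeleton, not the skeleton itself, and it is the entropy of that skeleton that obstructs the argument. Note also that every unit vector lies within $\sqrt\gamma$ of a $(1-\gamma)n$-sparse one, so for $c_1\ge\sqrt\gamma$ one has $\Comp(1-\gamma,c_1)=S^{n-1}$ and your claimed inequality would give $\sigma_n(M)\gtrsim\sqrt n$; the admissible window for $c_1$ is thus extremely narrow and the set is genuinely complicated.

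The paper does not prove Theorem~\ref{theorem:non-gap}; it is imported as a black box from \cite[Theorem~1.5]{RV-deloc}, with the footnote indicating how the statement for exact eigenvectors transfers to approximate null vectors. The proof in \cite{RV-deloc} is far from the ``standard scheme'' you describe: it stratifies the putative localized vectors by their arithmetic structure (least common denominator), couples an invertibility-via-distance reduction with small-ball estimates adapted to each stratum, and balances the stratum-wise metric entropy against these refined per-vector bounds. That stratification is precisely the missing idea needed to overcome the entropy obstruction above.
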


We will also work with non-random matrices. Roughly speaking we would like to obtain many well-conditioned almost square minors from a well-conditioned rectangular matrix. For this type of restricted invertibility phenomenon, we will take into account two powerful such results. The first ingredient is the main result from \cite{RV-submatrix} by Rudelson and Vershynin.

\begin{theorem}\label{theorem:submatrix}
Let $X$ be an $k \times n$ matrix with $r= \|X\|_{HS}^2/\|X\|_2^2$. Let $\eps, \delta \in (0,1)$ and let $d\le n$ be an integer such that 
$$d \ge C \frac{r}{ \eps^4 \delta} \log \frac{r}{\eps^4 \delta}.$$
Consider the $k \times d$ matrix $\tilde{X}$ which consists of $d$ unit-normalized columns of $X$ picked independently with replacement, with probabilities propositional to the squares of their Euclidean lengths. Then with probability at least $1-2 \exp(-c/\delta)$ the following holds
$$\|X X^T - \frac{\|X\|_{HS}^2}{d}\tilde{X} \tilde{X}^T \|_2 \le  \eps \|X\|_2.$$ 
\end{theorem}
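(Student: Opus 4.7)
The plan is to recognize $XX^T = \sum_{i=1}^n x_i x_i^T$, where $x_i$ are the columns of $X$, and to view $Y := \frac{\|X\|_{HS}^2}{d}\tilde X \tilde X^T$ as an unbiased random estimator: a direct computation using the sampling probabilities $p_i = \|x_i\|_2^2/\|X\|_{HS}^2$ gives $\E Y = XX^T$. Hence the task reduces to a concentration estimate for the i.i.d.\ sum of rank-one random matrices $Z_j := \frac{\|X\|_{HS}^2}{d}\tilde x_j \tilde x_j^T$, each with $\|Z_j\|_2 \le \|X\|_{HS}^2/d$ since $\|\tilde x_j\|_2 = 1$.

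The natural route is a three-step argument. First, symmetrize to replace $\E\|\sum_j (Z_j-\E Z_j)\|_2$ by twice the Rademacher-randomized expected norm $\E\|\sum_j \eta_j Z_j\|_2$ with $\eta_j \in \{\pm1\}$ i.i.d. Second, apply Rudelson's noncommutative Khintchine inequality, which bounds the conditional Rademacher sum by
\[
\Big\|\sum_j \eta_j \tilde x_j \tilde x_j^T\Big\|_2 \;\lesssim\; \sqrt{\log \rho}\cdot \max_j\|\tilde x_j\|_2\cdot\Big\|\sum_j \tilde x_j \tilde x_j^T\Big\|_2^{1/2},
\]
where $\rho$ is an intrinsic-dimensional quantity rather than the ambient $k$. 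Third, self-bound: the factor $\|\sum_j \tilde x_j \tilde x_j^T\|_2$ equals $(d/\|X\|_{HS}^2)\|Y\|_2$, which is dominated by $(d/\|X\|_{HS}^2)(\|XX^T\|_2 + \|Y-XX^T\|_2)$. Substituting back yields a quadratic inequality in $\|Y-XX^T\|_2^{1/2}$ whose solution produces an expected-norm estimate of the correct order in $r$ and $\eps$.

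To upgrade this expectation estimate to a high-probability statement, I would invoke Talagrand's deviation inequality for suprema of empirical processes (or equivalently a matrix Bernstein-type tail). Each summand $Z_j - \E Z_j$ has operator norm at most $\|X\|_{HS}^2/d$ and the cumulative variance proxy is of order $\|X\|_{HS}^2\|X\|_2^2/d$, so Talagrand yields a subgaussian-then-subexponential tail. Tuning the deviation parameter proportionally to $1/\delta$ produces the $2\exp(-c/\delta)$ failure probability, and forcing the deviation to be at most $\eps \|X\|_2^2$ then imposes the sample-count threshold $d \gtrsim (r/\eps^4\delta)\log(r/\eps^4\delta)$ recorded in the theorem.

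The main obstacle is controlling the logarithmic factor: a naive application of matrix Khintchine or matrix Bernstein produces $\log k$, whereas the statement demands only $\log r$, a crucial improvement when $r \ll k$. Overcoming this requires a preprocessing step---passing to the subspace spanned by the ``large'' singular directions of $X$ and treating the small-singular-value tail separately, or invoking an intrinsic-dimension version of the matrix concentration inequalities---so that the logarithm sees only $r = \|X\|_{HS}^2/\|X\|_2^2$. Threading this dimension reduction through the symmetrization and self-bounding argument while keeping track of the precise $\eps^4$ exponent is the delicate part of the proof.
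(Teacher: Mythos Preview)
The paper does not actually prove this theorem; it is quoted verbatim from Rudelson--Vershynin \cite{RV-submatrix} as a black-box ingredient in Section~\ref{section:lemmas}, so there is no ``paper's own proof'' to compare against.  That said, your outline matches the argument in \cite{RV-submatrix}: write $XX^T$ as a sum of rank-one terms, observe that the sampled sum is an unbiased estimator, symmetrize, apply Rudelson's rank-one noncommutative Khintchine lemma, self-bound via the resulting quadratic inequality in $\|Y-XX^T\|_2^{1/2}$, and then upgrade the expectation bound to a tail bound by Talagrand's concentration inequality for empirical processes.

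One correction to your diagnosis of the ``main obstacle'': the $\log r$ dependence does not require any preprocessing or intrinsic-dimension refinement.  Rudelson's lemma, in the form used in \cite{RV-submatrix}, produces a factor $\sqrt{\log d}$ (more precisely $\sqrt{\log\min(d,k)}$), not $\sqrt{\log k}$.  Once you feed this into the self-bounding step and solve the implicit inequality $d \gtrsim (r/\eps^{4}\delta)\log d$ for $d$, the logarithm automatically becomes $\log(r/\eps^{4}\delta)$.  So the delicate dimension-reduction side argument you anticipate is unnecessary; the three-step core you describe already delivers the stated bound.
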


The second ingredient is a more recent paper \cite[Theorem 6]{NY} by Naor and Youssef.

\begin{theorem}\label{theorem:submatrix'}
Suppose that $X$ is a full rank matrix of size $k\times d$ with $k\le d$. Then for $1\le l\le k-1$, there exists $l$ different indices $i_1,\dots, i_l$ such that the matrix $X_{(i_1,\dots, i_l)}$ with columns $\col_{i_1}(X),\dots, \col_{i_l}(X)$ has the smallest non-zero singular value $\sigma_l$ satisfying
$$\sigma_l^{-1} \le K_0\min_{r\in \{l+1,\dots, k\}} \sqrt{\frac{dr}{(r-l) \sum_{i=r}^k\sigma_i(X)^2}},$$
where $K_0$ is an absolute constant.
\end{theorem}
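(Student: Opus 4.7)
The plan is to combine a projection onto the tail left-singular subspace of $X$ with a Marcus--Spielman--Srivastava-type restricted invertibility bound applied to the projected matrix. Fix a parameter $r \in \{l+1,\dots,k\}$, to be optimised at the end. Let $P_r$ denote the orthogonal projection in $\R^k$ onto the span of the bottom $k-r+1$ left singular vectors of $X$, and set $Y = P_r X$. Then $Y$ is a $k\times d$ matrix whose column range is contained in a subspace of dimension at most $k-r+1$, and
$$\|Y\|_{HS}^2 = \sum_{i=r}^k \sigma_i(X)^2, \qquad \|Y\|_2 = \sigma_r(X).$$
Since $P_r$ has operator norm one, one has $\sigma_l(X_{(S)}) \ge \sigma_l(Y_{(S)})$ for every $S \subset [d]$ with $|S|=l$, so it suffices to exhibit one such $S$ for which $\sigma_l(Y_{(S)})$ is bounded below by the target quantity.

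The quantitative heart of the argument is then a restricted invertibility statement for $Y$: there exists $S \subset [d]$ with $|S|=l$ such that
$$\sigma_l(Y_{(S)})^2 \;\gtrsim\; \frac{r-l}{r}\cdot \frac{\|Y\|_{HS}^2}{d} \;=\; \frac{(r-l)\sum_{i=r}^k \sigma_i(X)^2}{rd}.$$
The route to this is the interlacing-polynomials method: one averages the characteristic polynomial $\chi_S(z) := \det\bigl(zI - \sum_{i\in S} y_i y_i^T\bigr)$ uniformly over all $S\subset[d]$ of size $l$ (where $y_i$ is the $i$-th column of $Y$), establishes real-rootedness of the expected polynomial via the mixed characteristic polynomials framework of Marcus--Spielman--Srivastava, and then invokes the interlacing families principle to conclude that \emph{some} $\chi_S$ has smallest positive root at least the smallest positive root of the expected polynomial. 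Locating that root is a generating-function computation, and Naor and Youssef's refinement is to choose the shift parameter in the generating function so as to exploit the low-dimensional structure of the image of $Y$, which is what produces the clean $\sqrt{(r-l)/r}$ factor rather than the coarser stable-rank-based estimate.

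Combining the two steps gives the claimed inequality for each fixed $r$, and minimising over $r\in\{l+1,\dots,k\}$ yields the theorem. The main obstacle is the quantitative root-location estimate for the expected characteristic polynomial: verifying real-rootedness is the standard mixed-characteristic-polynomials argument, but extracting the tail-sensitive $(r-l)/r$ dependence, as opposed to the weaker factor $1 - \sqrt{l\,\sigma_r^2/\|Y\|_{HS}^2}$ that a direct application of Spielman--Srivastava to $Y$ would give, requires the weighted generating-function analysis developed in \cite{NY}. Everything else in the proof is a routine projection-and-bookkeeping reduction.
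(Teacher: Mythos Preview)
The paper does not prove this statement; it is quoted as \cite[Theorem~6]{NY} and used as a black box, so there is no in-paper argument to compare your sketch against.

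Assessed on its own, your reduction has a concrete gap. The projection $Y=P_rX$ onto the bottom $k-r+1$ left singular directions has rank exactly $k-r+1$, yet you then ask for $S$ of size $l$ with $\sigma_l(Y_{(S)})>0$. This forces $l\le k-r+1$, i.e.\ $r\le k-l+1$, whereas the theorem's minimum runs over all $r\in\{l+1,\dots,k\}$. For $r>k-l+1$ every $l$-column submatrix of $Y$ is rank-deficient and $\sigma_l(Y_{(S)})=0$, so no restricted-invertibility bound can deliver the asserted positive lower bound. This range is not avoidable: take $X$ with all singular values equal and $l=k-1$; then the only admissible value is $r=k$, for which $k-r+1=1<l$, and your scheme yields nothing.

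Even in the range $r\le k-l+1$ the claimed inequality $\sigma_l(Y_{(S)})^2\gtrsim \frac{r-l}{r}\,\|Y\|_{HS}^2/d$ is suspect: the only intrinsic parameters of $Y$ are its rank $k-r+1$ and its stable rank $\|Y\|_{HS}^2/\sigma_r(X)^2$, neither of which equals $r$, so it is unclear how the integer $r$ can resurface in a restricted-invertibility estimate once the top $r-1$ directions have been discarded. You defer exactly this point to ``the weighted generating-function analysis developed in \cite{NY}'', but that analysis \emph{is} the proof of the theorem in question, so the proposal is circular at the decisive step. In \cite{NY} the tail parameter $r$ does not arise from a preliminary projection of $X$; it enters through the combinatorics of the interlacing-families computation carried out directly on $X$.
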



\section{Random iid matrices: proof of Theorem \ref{theorem:main}}\label{section:main}

\subsection{Proof of \eqref{eqn:k^2} of Theorem \ref{theorem:main}}\label{section:k^2}
 
In this section we will condition on the event $\CE_{bound}$ of \eqref{eqn:norm} and the event $\CE_{incomp}$ of Lemma \ref{lemma:comp} that for all unit vectors $\Bx$ such that $\|M\Bx\|_2 \le c \sqrt{n}$, we must have $\Bx \in \Incomp(c_0,c_1)$. 

By the min-max principle, 
$$\sigma_{n-k+1}(M) = \min_{\dim(H)=k} \max_{\Bx \in H, \|\Bx\|_2=1} \|M \Bx\|_2.$$
Thus if $\sigma_{n-k+1}(M) \le \eps/\sqrt{n}$ then there are $k$ orthogonal unit vectors $\Bz_1,\dots, \Bz_k$ such that 
\begin{equation}\label{eqn:x_i}
\|M \Bz_i\|_2 \le \eps/\sqrt{n}, 1\le i\le k.
\end{equation}
Assume that $\Bz_i =(z_{i1},\dots, z_{in})^T$, and let $\Bc_1,\dots, \Bc_n$ be the column vectors of $M$. The condition \eqref{eqn:x_i} can be read as
$$\|\sum_{j=1}^n z_{ij} \Bc_j\|_2 \le \eps/\sqrt{n}, \mbox{ for all } 1\le i\le k.$$

For short, let $Z$ be the $k \times n$ matrix spanned by the row vectors $\Bz_1^T,\dots, \Bz_k^T$, and let $\By_1,\dots, \By_n \in \R^k$ be the columns of $Z$.  For any (multi)-set $J\subset [n]$, $Z_{J}$ denotes the matrix generated by $\By_j, j\in J$.
By definition,
$$\sum_{1\le i\le n} \|\By_i\|^2_2 = k.$$
We gather here a simple fact that will be useful.
 
\begin{lemma}\label{lemma:Z:incomp} The set of unit vectors in the subspace $H_{\Bz_1,\dots, \Bz_k}$ spanned by $\Bz_1,\dots, \Bz_k$ belongs to $\Incomp(c_0,c_1)$. Consequently, for any set $J\subset [n]$ of size at least $(1-c_0)n$, the least singular value of $Z_J$ is at least $c_1$.
\end{lemma}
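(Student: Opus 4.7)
The plan is to leverage the conditioning on $\CE_{incomp}$: every unit vector $\Bx$ satisfying $\|M\Bx\|_2 \le c\sqrt{n}$ automatically lies in $\Incomp(c_0,c_1)$. The proof then splits into two short steps — first, promoting this property from individual vectors to the entire subspace $H_{\Bz_1,\dots,\Bz_k}$, and then translating subspace incompressibility into the singular-value bound for $Z_J$.

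For the first step I would take an arbitrary unit vector $\Bx = \sum_{i=1}^k \alpha_i \Bz_i$ in $H_{\Bz_1,\dots,\Bz_k}$. Orthonormality of the $\Bz_i$ forces $\sum_i |\alpha_i|^2 = 1$, so by the triangle inequality together with \eqref{eqn:x_i} and Cauchy--Schwarz,
$$\|M\Bx\|_2 \le \sum_{i=1}^k |\alpha_i|\,\|M\Bz_i\|_2 \le \sqrt{k}\cdot \frac{\eps}{\sqrt{n}} \le \eps.$$
Since $\eps = O(1)$, the right-hand side is comfortably at most $c\sqrt{n}$ (for $n$ past an absolute constant, which is all that is needed since the additive $\exp(-cn)$ term absorbs any remaining small-$n$ cases), and $\CE_{incomp}$ places $\Bx$ in $\Incomp(c_0,c_1)$.

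For the second step I would use the variational identity $\sigma_{\min}(Z_J) = \min_{\Bu \in S^{k-1}} \|Z_J^T\Bu\|_2$. Because the rows of $Z$ are the orthonormal vectors $\Bz_i^T$, the map $\Bu \mapsto Z^T\Bu = \sum_{i=1}^k u_i \Bz_i$ is an isometry $\R^k \to H_{\Bz_1,\dots,\Bz_k}$, and restricting its image to coordinates in $J$ gives exactly $Z_J^T\Bu$. Hence
$$\sigma_{\min}(Z_J) = \min_{\Bx \in H_{\Bz_1,\dots,\Bz_k} \cap S^{n-1}} \|\Bx_J\|_2.$$
By the first step each such $\Bx$ is incompressible, and the vector that agrees with $\Bx$ on $[n]\setminus J$ and vanishes on $J$ has support of size $\le c_0 n$ and is therefore sparse; the incompressibility of $\Bx$ (distance at least $c_1$ to this sparse vector) then yields $\|\Bx_J\|_2 \ge c_1$, which is the required lower bound.

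There is no serious obstacle here — the two sanity checks are that $\sqrt{k}\,\eps/\sqrt{n}$ is comfortably below the incompressibility threshold $c\sqrt{n}$, and that the natural sparse vector (namely the restriction of $\Bx$ to $[n]\setminus J$) correctly realizes the distance bound defining incompressibility.
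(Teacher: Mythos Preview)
Your proof is correct and follows essentially the same approach as the paper: both take an arbitrary unit combination $\sum_i \alpha_i \Bz_i$, bound $\|M\Bx\|_2$ by $\eps\sqrt{k}/\sqrt{n}$ via the triangle inequality, invoke $\CE_{incomp}$, and then read off $\|\Bx_J\|_2 \ge c_1$ from the definition of incompressibility. Your second step is actually more explicit than the paper's --- you spell out the variational characterization of $\sigma_{\min}(Z_J)$ and identify the sparse witness vector, whereas the paper just writes ``by definition $\|\Bz_J\|_2 \ge c_1$'' and ``for any unit vector $\mathbf{\alpha}$ we have $\|\mathbf{\alpha} Z_J\|_2 \ge c_1$''.
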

\begin{proof}(of Lemma \ref{lemma:Z:incomp}) Note that as $\Bz_1,\dots, \Bz_k$ are orthogonal, for any unit vector $\Bz = \sum_i \alpha_i \Bz_i \in H_{\Bz_1,\dots, \Bz_k}$, with $\mathbf{\alpha} = (\alpha_1,\dots, \alpha_k) \in S^k$, by the triangle inequality we have 
\begin{equation}\label{newbound}
\|M \Bz\|_2 \le \sum_i |\alpha_i| \|M\Bz_i\|_2 \le \eps \sqrt{k}/\sqrt{n}.
\end{equation}
As this bound is of order $o(\sqrt{n})$, and as we are under $\CE_{incomp}(c_0,c_1)$, this implies that $\Bz \in \CE_{incomp}(c_0,c_1)$, and by definition $\|\Bz_J\|_2 \ge c_1$ for any set $J\subset [n]$ of size at least $(1-c_0)n$. In other words, for any unit vector $\mathbf{\alpha}$ we have $\|\mathbf{\alpha} Z_J\|_2 \ge c_1$, as desired.
\end{proof}
Basing on this elementary fact, we show that $Z$ contains many well-conditioned minors.
\begin{lemma}\label{lemma:invert'} We can choose $\Theta_k(n^k)$ tuples $(j_1,\dots, j_k)\in [n]^k$ such that 
\begin{equation}\label{eqn:invert'}
\|Z_{(j_1,\dots, j_k)}^{-1}\|_2 =O_k(\sqrt{n}).
\end{equation}
\end{lemma}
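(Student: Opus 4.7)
The plan is to combine the orthonormality of the rows of $Z$ with Lemma \ref{lemma:Z:incomp} via the Cauchy--Binet identity. The key observation is that restricting $Z$ to a large set $L$ of ``light'' columns simultaneously makes $\|Z_J\|_2$ small (of order $k/\sqrt{n}$) and keeps $\det(Z_L Z_L^T)$ uniformly bounded below; by Cauchy--Binet the latter quantity distributes as a sum $\sum_{J\subset L}(\det Z_J)^2$ over $\binom{|L|}{k}$ many $k$-subsets, and any such $J$ that carries a non-negligible share of the total will automatically be well conditioned.

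First I would extract $L$ by Markov's inequality: since $\sum_{j=1}^n \|\By_j\|_2^2 = \|Z\|_{HS}^2 = \tr(ZZ^T) = k$, at most $c_0 n$ indices $j$ can satisfy $\|\By_j\|_2^2 > k/(c_0 n)$, so the set $L$ of light columns has $|L|\ge (1-c_0)n$. Lemma \ref{lemma:Z:incomp} then gives $\sigma_{\min}(Z_L)\ge c_1$, hence $\det(Z_L Z_L^T)\ge c_1^{2k}$, and Cauchy--Binet converts this into
$$\sum_{J\subset L,\;|J|=k}(\det Z_J)^2 \;=\; \det(Z_L Z_L^T) \;\ge\; c_1^{2k}.$$
At the same time, for any $k$-subset $J\subset L$ the light bound yields $\|Z_J\|_{HS}^2\le k^2/(c_0 n)$, so $(\det Z_J)^2\le \|Z_J\|_2^{2k}\le \bigl(k^2/(c_0 n)\bigr)^k$.

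Next I would run a pigeonhole argument on this identity. Setting the threshold $T:=c_1^{2k}/\bigl(2\binom{|L|}{k}\bigr)$, the subsets with $(\det Z_J)^2 < T$ contribute at most $c_1^{2k}/2$ to the sum, so the remaining ``good'' subsets individually satisfy $(\det Z_J)^2\ge T \ge c'_k n^{-k}$, and their number is at least
$$\frac{c_1^{2k}/2}{(k^2/(c_0 n))^k}\;=\;\Theta_k(n^k).$$

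Finally, for each such good $J$ the elementary inequality $\sigma_{\min}(Z_J)\ge |\det Z_J|/\sigma_{\max}(Z_J)^{k-1}$, together with $\sigma_{\max}(Z_J)\le \|Z_J\|_{HS}\le k/\sqrt{c_0 n}$, gives $\sigma_{\min}(Z_J)\ge c_k/\sqrt{n}$ for some constant $c_k$ depending only on $k$, $c_0$ and $c_1$; equivalently $\|Z_J^{-1}\|_2\le C_k\sqrt{n}$. Accounting for the $k!$ orderings of each such subset then produces $\Theta_k(n^k)$ ordered tuples, as claimed. I expect the only real obstacle to be bookkeeping: the constants degrade as $k^{O(k)}$, which is harmless for the intended application to Theorem \ref{theorem:main} but prevents a constant uniform in $k$.
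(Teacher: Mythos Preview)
Your argument is correct and follows essentially the same route as the paper: both proofs extract the set of light columns via Markov, invoke Lemma~\ref{lemma:Z:incomp} to lower-bound $\det(Z_L Z_L^T)$, expand by Cauchy--Binet, and count the $k$-subsets with non-negligible determinant. The only cosmetic difference is in the final step, where the paper bounds $\|Z_J^{-1}\|_2$ via the cofactor (Hadamard) bound on the $(k-1)\times(k-1)$ minors, while you use the equivalent inequality $\sigma_{\min}(Z_J)\ge |\det Z_J|/\sigma_{\max}(Z_J)^{k-1}$.
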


\begin{proof}(of Lemma \ref{lemma:invert'}) We say an index $i$ is {\it bad} if  $\|\By_i\|_2 \ge c_0^{-1/2} \sqrt{\frac{k}{n}}$. As $\sum_i \|\By_i\|^2=k$, the set $W^\sharp$ of bad at most $c_0n$. Set $W$ to be the complement of $W^\sharp$. Then by Lemma \ref{lemma:Z:incomp} the matrix $Z_{W}$ has the least singular value at least $c_1$. So trivially, $\det(Z_{W}Z_{W}^T) \ge c_1^{2k}$. By the Cauchy-Binet expansion identity, 
$$\sum_{j_1,\dots, j_k \in W} |\det(X_{j_1,\dots, j_k})|^2 \ge c_1^{2k}.$$
Note also that as $\|\By_i\|_2 \le c_0^{-1/2} \sqrt{\frac{k}{n}} $. We trivially have  $|\det(Z_{(j_1,\dots, j_k)})|^2 \le \prod_{i=1}^k \|\By_{j_i}\|^2_2 \le (c_0^{-1} k/n)^k$. So there are $\Theta_k(n^k)$ tuples $(j_1,\dots, j_k)$ tuples such that $|\det(Z_{(j_1,\dots, j_k)})|^2 = \Theta(n^{-k})$. The obtained matrices $Z_{(j_1,\dots, j_k)}$ clearly satisfy the conclusion of Lemma \ref{lemma:invert'} because $|\det(Z_{(j_1,\dots, j_{k-1}), l})|^2 \le \prod_{i=1}^{k-1} \|\By_{j_i}\|^2_2 \le (c_0^{-1} k/n)^{k-1}$, where $Z_{(j_1,\dots, j_{k-1}), l}$ is the $(k-1)\times (k-1)$ matrix obtained from $Z_{(j_1,\dots, j_{k-1})}$ by eliminating its $l$-th row.
\end{proof}

Now assume that $Z_{(j_1,\dots, j_k)}$ is as in Lemma \ref{lemma:invert'}. Let $A$ be the $k\times k$ matrix $Z_{(j_1,\dots, j_k)}^T$. Recall that the event $\CE$ in \eqref{eqn:x_i} can be written as 
$$B= (\Bc_{j_1}(M),\dots, \Bc_{j_k}(M))A + (\Bc_{j_{k+1}}(M),\dots, \Bc_{j_n}(M)) A',$$
where $A'=Z_{(j_{k+1},\dots, j_n)}^T$, and where by definition each of the $k$ column vectors of $B$ has norm at most $\eps/\sqrt{n}$. 
Let $A^{-1}$ be the inverse of $A$, then by \eqref{eqn:invert'}, $\|A^{-1}\|_2 = O(\sqrt{n})$. We have
\begin{equation}\label{eqn:separation}
(\Bc_{j_1},\dots, \Bc_{j_k})+ (\Bc_{j_{k+1}},\dots, \Bc_{j_n}) A' A^{-1}= B A^{-1}.
\end{equation}
Notice that 
$$\|BA^{-1}\|_{HS} \le \|A^{-1}\|_2 \|B\|_{HS} = O(\eps).$$
Let $H$ be the subspace generated by $\col_{j_{k+1}},\dots, \col_{j_n}$. Project each of the identity from \eqref{eqn:separation} onto the orthogonal complement of $H$, after taking the norm square, we obtain
$$\sum_{i=1}^k \dist(\col_{j_i},H)^2 \le \|BA^{-1}\|_{HS}^2.$$
It then follows that
$$\dist(\col_{j_1},H)=O_k(\eps) \wedge  \dots \wedge \dist(\col_{j_k},H)=O_k(\eps).$$
By Theorem \ref{theorem:dist} (first conditioning on $H^\perp \in \CH$, and then unfolding), this event $\CE_{(j_1,\dots, j_k)}$ is bounded by 
$$\P(\CE_{(j_1,\dots, j_k)}) = \Big(O_k(\eps^k) + \exp(-\Theta(n))\Big)^k +  \exp(-\Theta(n) \le C_k \eps^{k^2} +  \exp(-\Theta(n)).$$

To complete \eqref{eqn:k^2} of Theorem \ref{theorem:main}, it remains to combine Lemma \ref{lemma:invert'} with the following elementary claim.

\begin{claim}\label{claim:averaging} Let $\{\CE_{(j_1,\dots, j_k)}, (j_1,\dots, j_k) \in [n^k]\}$ be a collection of $n^k$ events with $\P(\CE_{(j_1,\dots, j_k)}) \le p $ for all $(j_1,\dots, j_k)$. Assume that if $\CE$ holds then some $c n^k$ events $\CE_{(j_1,\dots, j_k)}$ of the collection also hold. Then
$$\P(\CE) \le c^{-1} p.$$
\end{claim}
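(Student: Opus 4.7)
The plan is to run a standard first-moment / averaging argument. I would introduce the counting random variable
$$N \;=\; \sum_{(j_1,\dots,j_k) \in [n]^k} \oindicator{\CE_{(j_1,\dots,j_k)}},$$
which records how many events in the collection occur. By linearity of expectation and the uniform bound on the individual probabilities,
$$\E[N] \;=\; \sum_{(j_1,\dots,j_k)} \P\bigl(\CE_{(j_1,\dots,j_k)}\bigr) \;\le\; n^k \, p.$$

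On the other hand, the hypothesis says that whenever $\CE$ occurs, at least $c n^k$ of the events in the collection occur simultaneously; in other words, $N \ge c n^k \cdot \oindicator{\CE}$ holds pathwise. Taking expectations,
$$\E[N] \;\ge\; c n^k \, \P(\CE).$$
Combining the two estimates yields $c n^k \P(\CE) \le n^k p$, and dividing through by $c n^k$ gives $\P(\CE) \le c^{-1} p$, as claimed.

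The only ``obstacle'' here is a bookkeeping one: one has to observe that the hypothesis provides a deterministic pathwise lower bound on $N$ on the event $\CE$, which is precisely what enables the transition from the expectation of $N$ to the probability that many of the $\CE_{(j_1,\dots,j_k)}$ hold at once. This is really just Markov's inequality in disguise, so no genuine technique is required.
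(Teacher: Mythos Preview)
Your proof is correct and is essentially identical to the paper's own argument: the paper also introduces the sum of indicators $\sum I_{(j_1,\dots,j_k)}$, observes the pathwise inequality $\sum I_{(j_1,\dots,j_k)} \ge c n^k I_{\CE}$, and takes expectations to conclude. There is nothing to add.
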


\begin{proof}(of Claim \ref{claim:averaging}) Let $I_{(j_1,\dots, j_k)}$ be the indicator of $\CE_{(j_1,\dots, j_k)}$. Then by definition
$$\sum_{(j_1,\dots, j_k)} I_{(j_1,\dots, j_k)} \ge c n^k I_{\CE}.$$
Taking expectation, 
$$\P(\CE) \le c^{-1} n^{-k}\sum_{j_1,\dots, j_k} \P(\CE_{(j_1,\dots, j_k)}) \le c^{-1} p.$$
\end{proof}

\subsection{Proof of \eqref{eqn:assym} of Theorem \ref{theorem:main}}\label{section:l^2}
While \eqref{eqn:k^2} gives the optimal rate in terms of $\eps$, the dependence on $k$ is quite poor. Here we will try to improve on Lemma \ref{lemma:invert'} by relying on  Theorem \ref{theorem:submatrix} and Theorem \ref{theorem:submatrix'}. We will actually prove the following result.

\begin{theorem}\label{theorem:main'} For any fixed $0<\gamma<1$ there exist positive constants $C,c$ and $\gamma_0$ depending only on the subgaussian moment of $\xi$ and on $\gamma$ such that  for $1\le l\le k-1<\gamma_0 n$ and for any $\eps>0$
 \begin{equation}\label{eqn:l^2}
 \P(\sigma_{n-k+1} \le \frac{\eps}{\sqrt{n}}) \le \Big(\frac{C\eps k}{l^{1-\gamma}(k-l)}\Big)^{l^2} + \exp(-cn).
 \end{equation}
\end{theorem}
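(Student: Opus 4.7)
Plan:

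My plan is to adapt the scheme of Subsection \ref{section:k^2} with two refinements: replacing Lemma \ref{lemma:invert'} with the rectangular restricted-invertibility statements of Theorems \ref{theorem:submatrix'} and \ref{theorem:submatrix}, and upgrading the single-column distance bound to a joint one. I would condition on $\CE_{bound}$ and $\CE_{incomp}$, use the min-max principle to extract orthonormal vectors $\Bz_1,\ldots,\Bz_k$ with $\|M\Bz_i\|_2\le\eps/\sqrt n$, assemble them as rows of $Z\in\R^{k\times n}$ so that $ZZ^T=I_k$, and set $B:=MZ^T$ with $\|B\|_{HS}\le\sqrt k\,\eps/\sqrt n$.

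The first step selects $l$ columns of $Z$ yielding a well-conditioned $k\times l$ sub-block. Applying Theorem \ref{theorem:submatrix'} directly to $Z$ (taking $d=n$, with $\sigma_i(Z)=1$ for all $i\le k$) and choosing the parameter $r\approx\sqrt{l(k+1)}$ at the minimizer of $nr/((r-l)(k-r+1))$ yields an $l$-subset $J=\{j_1,\ldots,j_l\}\subset[n]$ with $\sigma_l(Z_J)\ge(\sqrt k-\sqrt l)/(K_0\sqrt n)\ge(k-l)/(2K_0\sqrt{nk})$. Hence the right-inverse $R_J$ of $Z_J^T$ obeys $\|R_J\|_2\le 2K_0\sqrt{nk}/(k-l)$, and projecting the identity $M_J=BR_J-M_{J^c}Z_{J^c}^TR_J$ onto $H_J^\perp$ (with $H_J=\mathrm{span}\{\col_j(M):j\notin J\}$) yields the crucial bound $\|P_{H_J^\perp}M_J\|_{HS}\le 2K_0\eps k/(k-l)$, which is free of the $\sqrt n$ factor.

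The second step is a joint anti-concentration estimate. Since the $l$ columns $\col_{j_1}(M),\ldots,\col_{j_l}(M)$ are mutually independent and independent of $H_J$ (of dimension $l$), a Chebyshev/Laplace-transform argument applied to $\sum_{i=1}^l\dist(\col_{j_i}(M),H_J)^2$ with the tail bound of Theorem \ref{theorem:dist} as input should give $\P(\|P_{H_J^\perp}M_J\|_{HS}\le D)\le(C_0 D/l^{1-1/(2l)})^{l^2}+\exp(-c_0 n)$, where the exponent $1-1/(2l)$ reflects the polynomial-in-$l$ correction arising from convolving the single-column tail. For $l\ge 1/(2\gamma)$ this is at least $l^{1-\gamma}$, matching the target exponent, while bounded values of $l$ are absorbed into the constant $C=C(\gamma)$. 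With $D\le 2K_0\eps k/(k-l)$ the per-event bound therefore takes the form $(C\eps k/(l^{1-\gamma}(k-l)))^{l^2}+\exp(-cn)$.

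Finally, I would convert the per-event estimate to a bound on $\P(\sigma_{n-k+1}\le\eps/\sqrt n)$ via Claim \ref{claim:averaging}. The event $\CE$ forces $\|P_{H_J^\perp}M_J\|_{HS}\le\|B\|_{HS}/\sigma_l(Z_J)$ for every $J$ with $Z_J$ of full rank $l$, so it suffices to argue that a $\Theta(1)$-fraction (or more generally a fraction $\gtrsim l^{-O(1)}$) of all $l$-subsets $J\subset[n]$ satisfy $\sigma_l(Z_J)\ge c(k-l)/\sqrt{nk}$. This is the main obstacle of the proof: Theorem \ref{theorem:submatrix'} readily gives one such $J$, and the Cauchy--Binet identity $\sum_{J\in\binom{[n]}{l}}\det(Z_J^TZ_J)=\binom{k}{l}$ (in the spirit of Lemma \ref{lemma:invert'}) easily yields $\Theta((n/l)^l)$ many $l$-tuples with $\det(Z_J^TZ_J)$ bounded below, but converting a $\det$ lower bound into a $\sigma_l$ lower bound requires upper-bounding the larger singular values of $Z_J$ and is delicate. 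I would attempt this by iterating Theorem \ref{theorem:submatrix'} on the restrictions $Z_W$ for varying $W\subset[n]$ with $|W|\ge(1-c_0)n$ (each well-conditioned by Lemma \ref{lemma:Z:incomp}) and performing a multiplicity analysis for the resulting $J$'s. Any residual polynomial-in-$l$ loss is finally absorbed into the $l^{1-\gamma}$ exponent via the $\gamma$-slack, exactly as in the joint anti-concentration step.
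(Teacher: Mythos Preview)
Your overall architecture matches the paper's, and your joint anti-concentration step is in the same spirit as the paper's Claim \ref{claim:tensor} (which the paper proves by a simple pigeonhole rather than a Laplace transform). The genuine gap is exactly where you flag it: producing $\Theta(c^{O(k\log k)}) n^l$ many $l$-tuples $J$ with $\sigma_l(Z_J)\gtrsim (k-l)/\sqrt{nk}$, so that Claim \ref{claim:averaging} can be invoked. Your suggested fix---iterating Theorem \ref{theorem:submatrix'} over different restrictions $Z_W$ with $|W|\ge(1-c_0)n$---does not give multiplicity control: nothing prevents the Naor--Youssef selection from returning the same $l$-tuple (or a small family of them) across all such $W$, and the Cauchy--Binet route only controls $\det(Z_J^T Z_J)$, which, as you note, does not convert to a $\sigma_l$ bound without an upper bound on $\sigma_1(Z_J)$ that you do not have in general.

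The paper resolves this by bringing in an ingredient you do not use: the non-gap delocalization Theorem \ref{theorem:non-gap}. After discarding the at most $c_0 n$ columns of $Z$ with large norm and the columns with very small norm, delocalization forces $\sigma_k$ of the remaining $k\times |W|$ block $Z'$ to be bounded below by an absolute constant $\gamma_3$ (not merely $c_1$), and simultaneously all surviving columns have comparable norms $\asymp\sqrt{k/n}$. This balanced-column structure is exactly the input needed for the random-sampling Theorem \ref{theorem:submatrix}: sampling $d=\lfloor Ck\log k\rfloor$ columns proportionally to squared lengths succeeds with probability $\ge 1-2e^{-c/\delta}$, which immediately yields $c^d n^d$ well-conditioned $k\times d$ blocks (Lemma \ref{lemma:d}). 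Only then is Theorem \ref{theorem:submatrix'} applied, once per $d$-tuple, to extract an $l$-tuple; a trivial overcount gives $c^d n^l$ good $l$-tuples (Corollary \ref{cor:l}). The factor $c^d$ with $d=O(k\log k)$ is harmless since it is dominated by the main term. In short, the missing idea is the combination \emph{delocalization $\Rightarrow$ balanced columns $\Rightarrow$ random sampling gives many good $d$-tuples $\Rightarrow$ Naor--Youssef on each}; without Theorem \ref{theorem:non-gap} the counting step does not go through.
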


It is clear that \eqref{eqn:assym} follows from this result by taking $l$ close to $k$. To prove Theorem \ref{theorem:main'}, it suffices to prove for $k$ of order $o(n /\log n)$ because the first error term would be significantly smaller than $\exp(-cn)$ if $k$ becomes larger than this. The extra ingredient to be used  is the non-gap delocalization result from Theorem \ref{theorem:non-gap} for random iid matrix $M$. 

Together with the events $\CE_{bound}$ and $\CE_{incomp}$ in the previous section, we will also condition on the delocalization event $\CE_{deloc}$ of Theorem \ref{theorem:non-gap} for appropriate choice of $\gamma_0$. 

With $Z$ as in Section \ref{section:k^2}, we will extract from it many almost square well-conditioned minors. 

\begin{lemma}\label{lemma:d} There exist constants $\delta,c, C>0$ such that the following holds for $d=\lfloor Ck \log k \rfloor \le n$. There exists $c^d n^d$ ordered tuples $(j_1,\dots, j_d) \in [n]^d$ such that the matrix $Z_{(j_1,\dots, j_d)}$ satisfies
\begin{equation}\label{eqn:invert}
\sigma_k^{-1}(Z_{(j_1,\dots, j_d)}) \le \delta^{-1}  \sqrt{\frac{n}{d}} .
\end{equation}
\end{lemma}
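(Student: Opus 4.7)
My approach is to combine Theorem \ref{theorem:non-gap} (delocalization) with Theorem \ref{theorem:submatrix} (Rudelson--Vershynin random column sampling), applied to a carefully truncated subset of the columns of $Z$. Recall that $Z$ has orthonormal rows $\Bz_1^T,\dots,\Bz_k^T$, so $\|Z\|_2 = 1$ and $\|Z\|_{HS}^2 = k$. From \eqref{newbound}, every unit vector $\Bz\in H_{\Bz_1,\dots,\Bz_k}$ satisfies $\|M\Bz\|_2 \le \eps\sqrt{k/n}\le \gamma_2\sqrt n$ for bounded $\eps$ and $k\le \gamma_0 n$ sufficiently small, so Theorem \ref{theorem:non-gap} applies to the entire row span of $Z$. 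Since $v^T Z$ is a unit vector of $H_{\Bz_1,\dots,\Bz_k}$ for any unit $v\in\R^k$, this delocalization transfers to the columns of $Z$: there are constants $\gamma,\gamma_3>0$ such that $\sigma_k(Z_I)\ge \gamma_3$ whenever $|I|\ge \gamma n$.

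The first step is to extract a balanced column set
$$W := \{\,j\in[n] : c_4 k/n \le \|\By_j\|_2^2 \le c_5 k/n\,\},$$
of size $|W|\ge \gamma n$, for suitable positive constants $c_4,c_5$ depending only on $\gamma_3$. The upper truncation discards at most $n/c_5$ indices by $\sum_j\|\By_j\|_2^2 = k$. For the lower truncation, if the set $\{j : \|\By_j\|_2^2 < c_4 k/n\}$ were too large, delocalization applied to it would give $\sum_j\|\By_j\|_2^2\ge k\gamma_3^2$ on this set, contradicting the direct column-norm bound once $c_4 < \gamma_3^2$. Applying delocalization once more to $W$ itself produces $\sigma_k(Z_W)\ge \gamma_3$, and hence $Z_WZ_W^T \succeq \gamma_3^2 I_k$.

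The second step is to apply Theorem \ref{theorem:submatrix} to $Z_W$ with error parameter strictly smaller than $\gamma_3^2/2$ and the sampling parameter fixed at a small absolute constant. Since the stable rank of $Z_W$ is at most $k$, the hypothesis on $d$ is exactly $d\ge Ck\log k$. Sampling $d$ columns of $Z_W$ with replacement with probabilities $p_j := \|\By_j\|_2^2/\|Z_W\|_{HS}^2$ produces a unit-normalized sample $\tilde Z$ satisfying, with probability at least $1/2$,
$$\tfrac{\|Z_W\|_{HS}^2}{d}\tilde Z\tilde Z^T\ \succeq\ Z_WZ_W^T - \tfrac{\gamma_3^2}{2}I\ \succeq\ \tfrac{\gamma_3^2}{2}I_k.$$
The role of the lower truncation is now essential: each sampled index $j_i\in W$ obeys $\|\By_{j_i}\|_2^2\ge c_4 k/n$, so
$$Z_{(j_1,\dots,j_d)}Z_{(j_1,\dots,j_d)}^T\ =\ \sum_{i=1}^{d} \By_{j_i}\By_{j_i}^T\ \succeq\ \frac{c_4 k}{n}\,\tilde Z\tilde Z^T\ \succeq\ \frac{c_4\gamma_3^2}{2}\cdot\frac{d}{n}\,I_k,$$
which yields $\sigma_k^{-1}(Z_{(j_1,\dots,j_d)})\le \delta^{-1}\sqrt{n/d}$ for $\delta := \sqrt{c_4\gamma_3^2/2}$, exactly the bound \eqref{eqn:invert}.

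The last step converts this probabilistic statement to a combinatorial count. Each ordered tuple $(j_1,\dots,j_d)\in W^d$ is sampled with probability $\prod_i p_{j_i} \le (c_5 k/n)^d / \|Z_W\|_{HS}^{2d}$, and since $\|Z_W\|_{HS}^2 \ge (c_4k/n)\cdot|W| = \Theta(k)$, this probability is at most $C_0^{\,d} n^{-d}$ for an absolute constant $C_0$. As the good event has probability at least $1/2$, a Markov (pigeonhole) argument produces at least $(1/2) C_0^{-d} n^d \ge c^d n^d$ good ordered tuples, completing the conclusion. The main obstacle in this plan is the first step: arranging the column truncation to simultaneously retain $\sigma_k(Z_W)\ge \gamma_3$ (so that Theorem \ref{theorem:submatrix} yields a useful lower bound) \emph{and} the uniform lower bound $\|\By_{j_i}\|_2^2\gtrsim k/n$ (so that the normalized submatrix can be converted back to $Z_{(j_1,\dots,j_d)}$ without losing the correct $d/n$ scaling); without both halves of the truncation, the final operator inequality degrades.
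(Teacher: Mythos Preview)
Your proposal is correct and follows essentially the same route as the paper's proof: truncate the columns of $Z$ to a set $W$ on which the column norms are two-sidedly comparable to $\sqrt{k/n}$, invoke Theorem~\ref{theorem:non-gap} to get $\sigma_k(Z_W)\ge\gamma_3$, apply Theorem~\ref{theorem:submatrix} to $Z_W$, and rescale $\tilde Z$ back to $Z_{(j_1,\dots,j_d)}$ via the column-norm lower bound. The only cosmetic differences are that the paper controls the low-norm set through the incompressibility Lemma~\ref{lemma:Z:incomp} rather than a second application of delocalization, and that you spell out the probability-to-count conversion in step~7 more explicitly than the paper does.
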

\begin{proof}(of Lemma \ref{lemma:d}) Similarly to the previous section, we call in index $i$ {\it bad} if either $\|\By_i\|_2 \ge c_0^{-1/2} \sqrt{\frac{k}{n}}$ or $\|\By_i\|_2 \le c_1^2  \sqrt{\frac{k}{n}}$. As $\sum_i \|\By_i\|^2=k$, the set $W^{\sharp}$ of indices of the first type is at most $c_0n$. So by the incompressibility property,
$\sum_{i\notin W^\sharp} \|\By_i\|^2 = \sum_{j=1}^k \sum_{i\notin W^\sharp}\Bx_{ji}^2 \ge c_1^2k$. Furthermore, for the set $W^\flat$ of indices of second-type
$$\sum_{i\in W^{\flat}} \|\By_i\|^2 \le \sum_{i\in W^{\flat}} c_1^2 \frac{k}{n} \le c_1^4 k.$$
Thus, with $W= [n]\backslash (W^{\sharp}\cup W^{\flat})$, 
$$\sum_{i\in W} \|\By_i\|_2^2 \ge (c_1^2-c_1^4)k \ge c_1^2k/2.$$
So,  
$$|W| \ge c_0 c_1^2 n/2.$$
Consider the matrix $Z'$ of size $k \times |W|$  generated by the columns $\By_i, i\in W$. Recall that $c_1^2  \sqrt{\frac{k}{n}} \le \|\By_i\|_2 \le c_0^{-1/2} \sqrt{\frac{k}{n}}$. Theorem \ref{theorem:non-gap} applied to $\gamma=c_0c_1^2/2$ (where we note that the condition $\|M\Bz\|_2 \le \gamma_0 \sqrt{n}$ is fulfilled by \eqref{newbound} if $n$ is sufficiently large) implies that
$$\sigma_k(Z') \ge \gamma_3.$$
Now if $C$ is sufficiently large, Theorem \ref{theorem:submatrix} applied to $Z'$ with $\eps=\gamma_3^2/2$ yields $c^dn^d$ matrices $Z_{(j_1,\dots, j_d)}$ such that its normalized matrix $\tilde{Z}$ satisfies $\|Z' Z'^T - \frac{\|Z'\|_{HS}^2}{d}\tilde{Z} \tilde{Z}^T \|_2 \le  \gamma_3^2 \|Z'\|_2/2 \le \gamma_3^2/2$,  and so the least singular values are also comparable,
$$ \frac{\|Z'\|_{HS}^2}{d} \sigma_k(\tilde{Z}  \tilde{Z}^T) \ge \sigma_k(Z' Z'^T ) - \gamma_3^2/2 \ge \gamma_3^2-\gamma_3^2/2 = \gamma_3^2/2.$$
Note that by the process of Theorem \ref{theorem:submatrix}, the $j_1,\dots, j_d$ are not necessary distinct, but the set $\{j_1,\dots, j_d\}$ has cardinality at least $k$ in any case. Rescaling back to $Z_{(j_1,\dots,j_d)}$, we obtain
$$\sigma_k(Z_{(j_1,\dots,j_d)}) \ge c_1^2\sqrt{\frac{k}{n}}\sigma_k(\tilde{Z})   > c_1^2 \sqrt{\frac{k}{n}}\frac{\gamma_3\sqrt{d}}{2\|Z'\|_{HS}} \ge \frac{c_1^2 \gamma_3}{2} \sqrt{\frac{d}{n}}.$$
\end{proof}

Consider a matrix $Z_{(j_1,\dots,j_d)}$ that satisfies the conclusion of Lemma \ref{lemma:d}. Let $1\le l\le k-1$. Theorem \ref{theorem:submatrix'} applied to this matrix yields $l$ distinct indices $i_1,\dots, i_l \in \{j_1,\dots, j_d\}$ such that the matrix $Z_{(i_1,\dots, i_l)}$ has the smallest non-zero singular value $\sigma_l$ satisfying
\begin{equation}\label{eqn:l}
\sigma_l^{-1} \le K_0\min_{r\in \{l+1,\dots, k\}} \sqrt{\frac{\delta^{-2}rn}{(r-l)(k-r+1)}} \le 4 K_0\delta^{-1} \sqrt{\frac{kn}{(k-l)^2}},
\end{equation}
where we chose $r= \lceil \frac{k+l}{2}\rceil$.

As each such $l$-tuple $(i_1,\dots,i_l)$ may appear in at most $n^{d-l}$ $d$-tuples $(j_1,\dots,j_d)$ obtained from Lemma \ref{lemma:d}, we thus have

\begin{corollary}\label{cor:l} There exists a constant $c>0$ and $c^d n^{l}$ $l$-tuples $(i_1,\dots, i_l) \in [n]^l$ such that the matrix $Z_{(i_1,\dots, i_l)}$ satisfies \eqref{eqn:l}.
\end{corollary}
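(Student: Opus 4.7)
The plan is a straightforward averaging argument built on top of Lemma \ref{lemma:d} and Theorem \ref{theorem:submatrix'}. By Lemma \ref{lemma:d}, I have a family $\CF$ of at least $c^d n^d$ ordered $d$-tuples $(j_1,\dots,j_d)\in [n]^d$ for which the minor $Z_{(j_1,\dots,j_d)}$ satisfies $\sigma_k^{-1}(Z_{(j_1,\dots,j_d)}) \le \delta^{-1}\sqrt{n/d}$. To each such tuple I would apply Theorem \ref{theorem:submatrix'} to the $k \times d$ matrix $Z_{(j_1,\dots,j_d)}$, which is of full rank $k$ (by the previous bound) and satisfies $k\le d$. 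This produces $l$ distinct indices $i_1,\dots,i_l\in\{j_1,\dots,j_d\}$ such that the restricted minor $Z_{(i_1,\dots,i_l)}$ obeys
\[
\sigma_l^{-1}(Z_{(i_1,\dots,i_l)}) \le K_0 \min_{l<r\le k}\sqrt{\frac{dr}{(r-l)\sum_{i=r}^k \sigma_i(Z_{(j_1,\dots,j_d)})^2}}.
\]
Choosing $r=\lceil (k+l)/2\rceil$, so that both $r-l\ge (k-l)/2$ and $k-r+1\ge (k-l)/2$, and lower-bounding $\sum_{i=r}^k \sigma_i^2 \ge (k-r+1)\sigma_k^2 \ge \tfrac{k-l}{2}\cdot\delta^2 d/n$ via \eqref{eqn:invert}, one arrives at exactly the bound \eqref{eqn:l}.

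The remaining step is counting. Consider the map $\Phi\colon \CF \to [n]^l$ sending each good $d$-tuple to the ordered $l$-tuple $(i_1,\dots,i_l)$ output by Theorem \ref{theorem:submatrix'}, using any canonical ordering (say the order in which the $i_j$'s first appear inside $(j_1,\dots,j_d)$). Any fixed output $l$-tuple has at most $\binom{d}{l} n^{d-l}$ preimages in $\CF$: choose which $l$ of the $d$ slots host the specified entries, and fill the remaining $d-l$ slots with arbitrary elements of $[n]$. Pigeonhole then produces at least
\[
\frac{c^d n^d}{\binom{d}{l} n^{d-l}} \ge \frac{c^d}{2^d}\,n^l
\]
distinct $l$-tuples in the image of $\Phi$, which is of the required form $c^d n^l$ after replacing $c$ by $c/2$.

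\textbf{Main obstacle.} There is essentially none beyond bookkeeping: the only delicate point is confirming \eqref{eqn:l} via the choice $r=\lceil (k+l)/2\rceil$ together with the monotonicity $\sigma_i\ge \sigma_k$ for $l+1\le i\le k$; the combinatorial loss $\binom{d}{l}\le 2^d$ is safely absorbed into $c^d$ at the price of shrinking the constant.
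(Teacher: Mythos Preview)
Your argument is correct and follows the same route as the paper: apply Theorem \ref{theorem:submatrix'} to each good $d$-tuple from Lemma \ref{lemma:d}, then pigeonhole on the resulting map from $d$-tuples to $l$-tuples. The paper simply asserts that each $l$-tuple arises from at most $n^{d-l}$ $d$-tuples, whereas you more carefully obtain the preimage bound $\binom{d}{l}n^{d-l}$ and absorb the extra $\binom{d}{l}\le 2^d$ into the constant; this is the cleaner accounting, and the conclusion is the same.
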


We next proceed as in the previous section. Let $A$ be the $l\times k$ matrix $Z_{\{i_1,\dots, i_l\}}^T$. Recall that the event $\CE$ in \eqref{eqn:x_i} can be written as 
$$B= (\Bc_{i_1}(M),\dots, \Bc_{i_l}(M))A + (\Bc_{i_{l+1}}(M),\dots, \Bc_{i_n}(M)) A',$$
where $A'=Z_{(i_{l+1},\dots, i_n)}^T$, and where by definition each of the $k$ column vectors of $B$ has norm at most $\eps/\sqrt{n}$. 
Let $\bar{A}$ be the $k\times l$ matrix so that $A\bar{A}=I_l$. Then by Corollary \ref{cor:l}, $\|\bar{A}\|_2 \le  4 K_0\delta^{-1} \sqrt{\frac{kn}{(k-l)^2}}$. We have
\begin{equation}\label{eqn:separation'}
(\Bc_{i_1},\dots, \Bc_{i_l})+ (\Bc_{i_{l+1}},\dots, \Bc_{i_n}) A' \bar{A}= B\bar{A}.
\end{equation}
Notice that 
$$\|B\bar{A}\|_{HS} \le \|\bar{A}\|_2 \|B\|_{HS} \le   4 C_0\delta^{-1} \sqrt{\frac{kn}{(k-l)^2}} \sqrt{\frac{k \eps^2}{n}} \le \frac{4K_0 \delta^{-1}k}{k-l}\eps.$$
Let $H$ be the subspace generated by $\col_{i_{l+1}},\dots, \col_{i_n}$. Similarly to the previous section, \eqref{eqn:separation'} implies the event $\CE_{(i_1,\dots,i_l)}$ that
$$\dist(\col_{i_1},H)^2+ \dots+ \dist(\col_{i_l},H)^2 \le (\frac{4 K_0\delta^{-1}k}{k-l})^2 \eps^2.$$

By Claim \ref{claim:averaging} and by Corollary \ref{cor:l} (and also because $d= O(k \log k)$, the factor $c^d$ does not affect the bounds),  it suffices to show that for any fixed tuple $(i_1,\dots,i_l)$
$$\P(\CE_{(i_1,\dots,i_l)}) \ll  \Big(\frac{C \eps k}{l^{1-\gamma}(k-l)}\Big)^{l^2/2} + \exp(-cn),$$
for some absolute constants $C$ and $c$.
 
\begin{lemma}\label{lemma:end:iid} Let $\CE_t$ be the event $\dist(\col_{i_1},H)^2+ \dots+ \dist(\col_{i_l},H)^2 \le t^2$. Then there exist constants $C$ and $c$ (which also depend on $\gamma$) such that
$$\P(\CE_t) \ll (C t/l^{1-\gamma})^{l^2} + \exp(-cn).$$
\end{lemma}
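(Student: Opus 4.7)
The plan is to exploit the independence of the $l$ columns $\col_{i_1},\dots,\col_{i_l}$ from the subspace $H=\mathrm{span}(\col_{i_{l+1}},\dots,\col_{i_n})$, combine it with Theorem \ref{theorem:dist}, and apply a sorting trick that converts the $\ell^2$-sum constraint into coordinatewise constraints on the order statistics of the distances $d_j := \dist(\col_{i_j},H)$. First I would condition on the high-probability event $H^\perp \in \CH$ of Theorem \ref{theorem:dist}; this costs an additive $\exp(-c'n)$ and accounts for the $\exp(-cn)$ term in the conclusion. Since the entries of $M$ are iid, the $d_j$ are then mutually independent, and Theorem \ref{theorem:dist} applied with codimension parameter $l$ gives
\[
\P(d_j \le \delta) \le (C_0\delta/\sqrt{l})^{l} + \exp(-c_0 n).
\]

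Next I would replace the $\ell^2$ event $\sum_j d_j^2 \le t^2$ by coordinatewise events on the order statistics $d_{(1)}\le\cdots\le d_{(l)}$: indeed the inequality $(l-m+1)\, d_{(m)}^2 \le \sum_{r\ge m} d_{(r)}^2 \le t^2$ forces $d_{(m)} \le t/\sqrt{l-m+1}$ for every $m$. Union-bounding over the $l!$ permutations realizing such a sorting, and invoking independence together with the iid property,
\[
\P(\CE_t) \le l!\prod_{m=1}^l \P(d_1 \le t/\sqrt{m}) \le l!\prod_{m=1}^l \Bigl[(C_0 t/\sqrt{ml})^{l}+\exp(-c_0 n)\Bigr].
\]
The product of the main terms equals $l!\,(C_0 t/\sqrt{l})^{l^2}(l!)^{-l/2}$; applying Stirling and $l! \le l^l$ shows this is at most $(Ct/l^{1-1/l})^{l^2}$, and for $l \ge 1/\gamma$ the exponent satisfies $l^{1-1/l} \ge l^{1-\gamma}$, yielding the advertised $(Ct/l^{1-\gamma})^{l^2}$; the regime $l < 1/\gamma$ is absorbed into a $\gamma$-dependent constant $C$. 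The cross terms containing at least one $\exp(-c_0 n)$ factor contribute at most $l!\,2^l\exp(-c_0 n) \le \exp(-cn)$, since we may restrict to $l = o(n/\log n)$ (outside this regime $\exp(-cn)$ already dominates the main bound).

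The main obstacle is this final arithmetic step: the much coarser union bound using only $\P(d_j \le t) \le (C_0 t/\sqrt{l})^l$ for each $j$ separately would give only $(Ct/\sqrt{l})^{l^2}$, and squeezing the extra factor of $l^{1/2-\gamma}$ into the denominator demands that we actually use the $\ell^2$ constraint---most of the $d_j$ must be considerably smaller than $t$---which is precisely what the sorting-and-permutation step captures.
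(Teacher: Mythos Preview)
Your argument is correct. The overall skeleton---condition on $H^\perp\in\CH$, invoke Theorem~\ref{theorem:dist} for each $\dist(\col_{i_j},H)$, use independence of the $l$ distances, and convert the $\ell^2$ constraint into coordinatewise constraints---is exactly the paper's. The point of departure is only the conversion step. The paper's Claim~\ref{claim:tensor} uses a two-level pigeonhole: at most $\lceil\gamma l\rceil$ of the distances can exceed $\gamma^{-1/2}t/\sqrt{l}$, so one union-bounds over the $2^l$ choices of this exceptional set and multiplies the crude bound $(Ct/\sqrt{l})^l$ on the exceptional indices against the sharper bound $(C\gamma^{-1/2}t/l)^l$ on the remaining $\ge(1-\gamma)l$ indices. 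Your order-statistics trick $d_{(m)}\le t/\sqrt{l-m+1}$ with a union bound over $l!$ permutations is a finer version of the same idea: rather than a binary large/small split you use the full gradation. This buys you the slightly cleaner intermediate exponent $l^{1-1/l}$ (so the $\gamma$ enters only at the very end to absorb small $l$), at the price of the larger combinatorial prefactor $l!$ instead of $2^l$; since $l=o(n/\log n)$ both prefactors are harmless against $\exp(-c_0 n)$, and the final bound is the same.
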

Observe that Lemma \ref{lemma:end:iid} follows from the following claim where the assumption is satisfied by Theorem \ref{theorem:dist}. 
\begin{claim}\label{claim:tensor} Let $t_0>0$ be any given number. Assume that $\xi_i$ are independent random variables such that $\P(|\xi_i|< t )\le (Ct/\sqrt{l})^l+\exp(-cn)$ for all $t\ge t_0$. Then for all $0<\gamma<1$ there exists a constant $C_\gamma$ such that  for all $t\ge t_0$
$$\P(\xi_1^2+\dots+\xi_l^2  <t^2)\ll  \Big(\frac{C_\gamma t}{l^{1-\gamma}} \Big)^{l^2} + \exp(-cn).$$
\end{claim}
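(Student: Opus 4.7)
My plan for Claim~\ref{claim:tensor} is a sort-and-union-bound argument that tensorizes the per-coordinate small-ball hypothesis of exponent $l$ into the $l^2$-exponent bound on the sum of squares. Heuristically, once the $|\xi_i|$ are sorted in increasing order, the $i$-th smallest must already obey $|\xi_{(i)}|\le t/\sqrt{l-i+1}$, so applying the hypothesis at the $l$ different scales $t/\sqrt{1},\dots,t/\sqrt{l}$ and multiplying yields a product whose Stirling-controlled denominator $(l!)^{l/2}$ beats the combinatorial $l!$ coming from union-bounding over orderings.

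I would begin with the observation that if $\xi_1^2+\dots+\xi_l^2<t^2$, then writing $|\xi_{(1)}|\le\dots\le|\xi_{(l)}|$ for the order statistics gives $(l-i+1)\xi_{(i)}^2\le \sum_{j\ge i}\xi_{(j)}^2\le t^2$, hence $|\xi_{(i)}|\le t/\sqrt{l-i+1}$. Union-bounding over the $l!$ permutations of $[l]$ and invoking independence,
\[
\P\!\left(\textstyle\sum_i\xi_i^2<t^2\right)\;\le\;\sum_{\sigma\in S_l}\prod_{i=1}^l\P\!\left(|\xi_{\sigma(i)}|<t/\sqrt{l-i+1}\right).
\]
Reindexing $j=l-i+1$ and applying the hypothesis whenever $t/\sqrt{j}\ge t_0$ (for the $O_\gamma(1)$ indices $j$ with $t/\sqrt{j}<t_0$, I would bound the factor trivially by $1$ and absorb the loss into $C_\gamma$), each summand is at most $\prod_{j=1}^l\bigl[(Ct/\sqrt{jl})^l+\exp(-cn)\bigr]$.

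Next I would expand this product. The main term is
\[
\prod_{j=1}^l\left(\frac{Ct}{\sqrt{jl}}\right)^l\;=\;\frac{(Ct/\sqrt{l})^{l^2}}{(l!)^{l/2}},
\]
and the remaining $2^l-1$ cross terms each carry at least one factor of $\exp(-cn)$, contributing $O(2^l\exp(-cn))\le\exp(-c'n)$ in total for any $c'<c$ once $n$ is large. Applying $l!\ge(l/e)^l$ in the denominator and $l!\le l^l$ in the union-bound prefactor yields
\[
l!\cdot\frac{(Ct/\sqrt{l})^{l^2}}{(l!)^{l/2}}\;\le\;l^l\cdot\frac{(Ce^{1/2}t)^{l^2}}{l^{l^2}}\;=\;\frac{(Ce^{1/2}t)^{l^2}}{l^{l^2-l}}.
\]
For $l\ge 1/\gamma$ one has $l^2-l\ge(1-\gamma)l^2$, producing the desired $(C_\gamma t/l^{1-\gamma})^{l^2}$ with $C_\gamma=Ce^{1/2}$; the bounded-$l$ regime is absorbed by enlarging $C_\gamma$.

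The main obstacle is producing the quadratic exponent $l^2$ rather than the naive $l$. A single-coordinate application of the hypothesis only gives exponent $l$, and an AM--GM route $\prod|\xi_i|\le(t/\sqrt{l})^l$ followed by one small-ball estimate throws away the independence of the coordinates. The sorting trick genuinely invokes $l$ independent small-ball bounds at $l$ distinct scales, and the delicate bookkeeping amounts to verifying that Stirling's $(l!)^{l/2}$ dominates the $l!$ from the union bound over orderings, which it does with a margin of $l^{\gamma l^2}$ that is precisely what the slack parameter $\gamma$ buys.
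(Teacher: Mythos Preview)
Your argument is correct and takes a genuinely different route from the paper's. The paper uses only two scales: since $\sum_i\xi_i^2<t^2$, at most $\lceil\gamma l\rceil$ indices can have $|\xi_i|\ge\gamma^{-1/2}t/\sqrt{l}$, so after a union bound over the $\le 2^l$ choices of that exceptional set one multiplies $l-\lceil\gamma l\rceil$ copies of $\P(|\xi_i|<\gamma^{-1/2}t/\sqrt{l})\le(C\gamma^{-1/2}t/l)^l+\exp(-cn)$ against $\lceil\gamma l\rceil$ copies of the crude bound $\P(|\xi_i|<t)\le(Ct/\sqrt l)^l+\exp(-cn)$. Your order-statistics decomposition instead invokes the hypothesis at all $l$ scales $t,t/\sqrt2,\dots,t/\sqrt l$, pays an $l!$ union bound over orderings, and recoups it through the Stirling factor $(l!)^{l/2}$ in the denominator. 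Both reach the stated conclusion; the paper's two-scale version is shorter, while yours makes the tensorization mechanism behind the $l^2$ exponent more transparent and in fact yields the slightly better denominator $l^{l^2-l}$ before relaxing to $l^{(1-\gamma)l^2}$.

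One small slip: the number of indices $j$ with $t/\sqrt j<t_0$ equals $\max(0,\,l-\lfloor(t/t_0)^2\rfloor)$, which is not $O_\gamma(1)$ in general (take $t=t_0$). The paper's own proof applies the hypothesis at the scale $\gamma^{-1/2}t/\sqrt l$ and is equally cavalier about this constraint; in the intended application (Lemma~\ref{lemma:end:iid} via Theorem~\ref{theorem:dist}) the small-ball estimate is available at every positive scale, so the point is immaterial there. If you want your write-up to stand on its own for arbitrary $t_0$, replace the trivial bound $1$ at those indices by the monotone bound $\P(|\xi_i|<t/\sqrt j)\le\P(|\xi_i|<t_0)\le(Ct_0/\sqrt l)^l+\exp(-cn)$ and check the bookkeeping still closes.
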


\begin{proof}(of Claim \ref{claim:tensor}) As $\sum_i \xi_i^2 \le t^2$, there are at most $\lceil \gamma l \rceil $ indices 
such that $|\xi_i| \ge \gamma^{-1/2}t/\sqrt{l}$. For these events we use the simple bound $\P(|\xi_i| \le t)$, while for other events we use the bounds $\P(|\xi_i|\le \gamma^{-1/2}t/\sqrt{l})$. By taking union bound over at most $2^l$ possibilities, we obtain
\begin{align*}
\P(\xi_1^2+\dots+\xi_l^2  <t^2) & \le 2^l\Big((\frac{C\gamma^{-1/2} t }{l})^l+\exp(-cn)\Big)^{l -\lceil \gamma l \rceil  }  \Big((\frac{C t }{\sqrt{l}})^{l}+\exp(-cn)\Big)^{\lceil \gamma l \rceil }\\
& \ll  \Big(\frac{C_\gamma t}{l^{1-\gamma}} \Big)^{l^2} + \exp(-cn).
\end{align*}
\end{proof}

\begin{remark}\label{remark:iid:cont} When $\xi$ has density function bounded by $K$, then one applies Theorem \ref{theorem:dist:cont} instead of Theorem \ref{theorem:dist} to bound the events $\dist(\col_{i_j}, H)\le t$, conditioning on any realization of $H$.  As a consequence, we obtain a bound $(Kt/\sqrt{l})^l$ without additional terms, and so \eqref{eqn:k^2} and \eqref{eqn:assym} hold without $\exp(-cn)$.
\end{remark}

\section{Perturbed iid matrices: proof of Theorem \ref{theorem:main:perturb:iid}}\label{section:main:perturb:iid}
Similarly to the starting point of Subsection \ref{section:k^2}, by the min-max principle,  if $\sigma_{n-k+1}(M+F) \le \eps/\sqrt{n}$ then there are $k$ orthogonal unit vectors $\Bz_1,\dots, \Bz_k$ such that 
\begin{equation}\label{eqn:x_i:perturb}
\|(M+F) \Bz_i\|_2 \le \eps/\sqrt{n}, 1\le i\le k.
\end{equation}
Assume that $\Bz_i =(z_{i1},\dots, z_{in})^T$, and let $\Bc_1,\dots, \Bc_n$ be the column vectors of $M+F$. The condition \eqref{eqn:x_i:perturb} can be read as
$$\|\sum_{j=1}^n z_{ij} \Bc_j\|_2 \le \eps/\sqrt{n}, \mbox{ for all } 1\le i\le k.$$

As usual, let $Z$ be the $k \times n$ matrix spanned by the row vectors $\Bz_1^T,\dots, \Bz_k^T$, and let $\By_1,\dots, \By_n \in \R^k$ be the columns of $Z$.  For any subset $J\subset [n]$, $Z_{J}$ denotes the matrix generated by $\By_j, j\in J$.
By definition,
$$\sum_{1\le i\le n} \|\By_i\|^2_2 = k.$$

We will extract from $Z$ one well-conditioned minor. 

\begin{lemma}\label{lemma:l:perturb} Let $1\le l\le k-1$. There is at least one $l$-tuple $(i_1,\dots, i_l) \in [n]^l$ such that the matrix $Z_{(i_1,\dots, i_l)}$ of size $l\times k$ satisfies 
\begin{equation}\label{eqn:l:perturb}
\sigma_l^{-1} \le 4 K_0 \sqrt{\frac{kn}{(k-l)^2}},
\end{equation}
\end{lemma}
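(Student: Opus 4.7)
The plan is to apply the Naor--Youssef restricted invertibility theorem (Theorem \ref{theorem:submatrix'}) directly to $Z$ itself. Unlike in Subsection \ref{section:l^2}, no preliminary column-sampling via Theorem \ref{theorem:submatrix} is needed here, because the present lemma only asserts the existence of a single good $l$-tuple, not $\Theta(n^l)$ of them.

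First I would record the key observation that, since $\Bz_1,\dots,\Bz_k$ form an orthonormal system, the $k \times n$ matrix $Z$ has orthonormal rows. Hence $ZZ^T = I_k$, so $Z$ is full rank and every singular value of $Z$ equals $1$; in particular $\sum_{i=r}^{k}\sigma_i(Z)^2 = k-r+1$ for each $r \in \{l+1,\dots,k\}$. This collapses the somewhat cumbersome bound in Theorem \ref{theorem:submatrix'} into a completely explicit expression.

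Applying Theorem \ref{theorem:submatrix'} with $X=Z$ and $d=n$ then yields indices $i_1,\dots,i_l$ such that
$$\sigma_l^{-1}\bigl(Z_{(i_1,\dots,i_l)}\bigr) \;\le\; K_0 \min_{r \in \{l+1,\dots,k\}} \sqrt{\frac{n r}{(r-l)(k-r+1)}}.$$
Choosing $r = \lceil (k+l)/2 \rceil$, exactly as in the passage leading to equation \eqref{eqn:l}, one has $r-l \ge (k-l)/2$, $k-r+1 \ge (k-l)/2$, and $r \le k$, so the right-hand side is at most $2K_0\sqrt{nk/(k-l)^2}$, which is in turn bounded by $4K_0\sqrt{nk/(k-l)^2}$, as required.

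I do not anticipate any real obstacle: everything boils down to evaluating the Naor--Youssef bound on a matrix whose singular values are all equal to one. The only minor subtleties to watch are that Theorem \ref{theorem:submatrix'} only guarantees $l$ \emph{distinct} indices (rather than an ordered tuple) and only controls the smallest non-zero singular value of $Z_{(i_1,\dots,i_l)}$; both features match exactly what the statement of Lemma \ref{lemma:l:perturb} demands.
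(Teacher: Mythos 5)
Your proof is correct and follows essentially the same route as the paper: apply Theorem \ref{theorem:submatrix'} directly to $Z$ (no preliminary column sampling is needed since only existence is required), use that $ZZ^T = I_k$ so every singular value of $Z$ equals $1$ and $\sum_{i=r}^k \sigma_i(Z)^2 = k-r+1$, and then minimize over $r$ by taking $r = \lceil (k+l)/2\rceil$, which gives $2K_0\sqrt{kn/(k-l)^2} \le 4K_0\sqrt{kn/(k-l)^2}$. The only difference is that you make the singular-value computation explicit, whereas the paper states the resulting bound directly (and incidentally carries over a stray reference to $\{j_1,\dots,j_d\}$ from the preceding subsection that plays no role here).
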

\begin{proof} Theorem \ref{theorem:submatrix'} applied to $Z$ yields $l$ distinct indices $i_1,\dots, i_l \in \{j_1,\dots, j_d\}$ such that the matrix $Z_{(i_1,\dots, i_l)}$ has the smallest non-zero singular value $\sigma_l$ satisfying
$$
\sigma_l^{-1} \le C_0\min_{r\in \{l+1,\dots, k\}} \sqrt{\frac{rn}{(r-l)(k-r+1)}} \le 4K_0  \sqrt{\frac{kn}{(k-l)^2}},
$$
where we chose $r= \lceil \frac{k+l}{2}\rceil$.
\end{proof}

We next proceed as in Subsection \ref{section:l^2}. Let $A$ be the $l\times k$ matrix $Z_{\{i_1,\dots, i_l\}}^T$. Recall that the event $\CE$ in \eqref{eqn:x_i:perturb} can be written as 
$$B= (\Bc_{i_1}(M+F),\dots, \Bc_{i_l}(M+F))A + (\Bc_{i_{l+1}}(M+F),\dots, \Bc_{i_n}(M+F)) A',$$
where $A'=Z_{(i_{l+1},\dots, i_n)}^T$, and where by definition each of the $k$ column vectors of $B$ has norm at most $\eps/\sqrt{n}$. 
Let $\bar{A}$ be the $k\times l$ matrix so that $A\bar{A}=I_l$. Then by Lemma \ref{lemma:l:perturb}, $\|\bar{A}\|_2 \le  4  \sqrt{\frac{kn}{(k-l)^2}}$. We have
\begin{equation}\label{eqn:separation':perturb}
(\Bc_{i_1},\dots, \Bc_{i_l})+ (\Bc_{i_{l+1}},\dots, \Bc_{i_n}) A' \bar{A}= B \bar{A}.
\end{equation}
Notice that 
$$\|B\bar{A}\|_{HS} \le \|\bar{A}\|_2 \|B\|_{HS} \le   4  K_0\sqrt{\frac{kn}{(k-l)^2}} \sqrt{\frac{k \eps^2}{n}} \le \frac{4 C_0k}{k-l}\eps.$$
Let $H$ be the subspace generated by $\col_{i_{l+1}}(M+F),\dots, \col_{i_n}(M+F)$. Equation \eqref{eqn:separation':perturb} implies the event $\CE_{(i_1,\dots,i_l)}$ that
\begin{equation}\label{eqn:pertub:iid:t}
\dist(\col_{i_1},H)^2+ \dots+ \dist(\col_{i_l},H)^2 \le t^2
\end{equation}
where 
$$t= \frac{4 K_0 k \eps}{k-l}.$$

{\bf Proof of \eqref{eqn:k^2:perturb:iid}.}  Choose $l=k-1$. Conditioned on $H$, the subspace $H^\perp$ has dimension at least $l$ (notice that the more $H$ becomes degenerate, the better bound we will get). 

With less focus on the implicit constants, we just simply bound $p_t =\sup_x \P(|\xi-x|\le t) \le 4K_0 k \sup_x \P(|\xi-x|\le \eps)= 4K_0 k p$. By Theorem \ref{theorem:dist:cont} we have 
$$\P(\dist(\col_{i_j},H)\le t) \le \P(\dist(\col_{i_j},H)\le t \sqrt{l}) \le  (C_0 p_t)^l \le (4K_0C_0k p)^l.
$$
Hence, conditioned on $H$, 
$$\P\Big(\dist(\col_{i_1},H)\le t) \wedge \dots \wedge  \dist(\col_{i_1},H)\le t \Big)  \le  (4K_0C_0k p)^{l^2}.$$
 
The proof of \eqref{eqn:k^2:perturb:iid} is then complete by unfolding the condition, and by taking the union bound over all possible $n^l$ choices of $(i_1,\dots,i_l)$.


{\bf Proof of \eqref{eqn:assym:perturb:iid}.} Here we use our deduction as above, but with $\eps$ replaced by $k \eps$ (as we are working with $(M+F)\Bz_i \le \eps k/\sqrt{n}$ now), and hence $t= 4K_0k^2 \eps/(k-l) $. 

Assume that $k$ is sufficiently large, we will choose  $l=\lfloor (1-\gamma/2)k \rfloor$. From \eqref{eqn:pertub:iid:t}, by averaging there are at least $l'=\lfloor (1-\gamma/2)l \rfloor$ indices $i$ such that $\P(\dist(\col_{i_j},H) \le \gamma^{-1/2}t/\sqrt{l})$. Thus, by taking union bound over at most $2^l$ possibilities, it boils down to estimate the event $\CE_{i_1,\dots, i_{l'}}$ that
\begin{equation}\label{eqn:small:perturb:iid}
\dist(\col_{i_1},H)=O(t/\sqrt{l}) = O(\sqrt{k} \eps)\wedge  \dots \wedge \dist(\col_{i_{l'}},H)=O(t/\sqrt{l}) = O(\sqrt{k}\eps).
\end{equation}
With $p= \sup_{x\in \R}\P(|\xi-x| \le \eps)$. Conditioned on $H$, by Theorem \ref{theorem:dist:cont}
\begin{align*}
\P(\CE_{i_1,\dots, i_{l'}}) \le (Cp)^{{l'}^2}.
\end{align*}
We then unfold the condition and take union bound over  the choices of $(i_1,\dots,i_{l'})$.

{\bf Proof of \eqref{eqn:discrete:perturb:iid}.} Assume that $k$ is sufficiently large, we proceed as in the proof of \eqref{eqn:assym:perturb:iid} above with $\eps=c_1$ with sufficiently small $c_1$,  and $t= 4K_0k^2 \eps/(k-l) $, $l=\lfloor (1-\gamma/2)k \rfloor$,  as well as $l'=\lfloor (1-\gamma/2)l \rfloor$. After obtaining \eqref{eqn:small:perturb:iid}, instead of Theorem \ref{theorem:dist:cont} we apply Theorem \ref{theorem:dist:discrete}, which yields that (conditioned on any realization of $H$), 
\begin{align*}
\P(\CE_{i_1,\dots, i_{l'}}) \le C^{l'}e^{-c_2'c_1^2 {l'}^2},
\end{align*}
completing the proof.

\section{Random symmetric matrices: proof of Theorem \ref{theorem:main:sym}}\label{section:main:sym}
\subsection{Proof of \eqref{eqn:k^2:sym}  of Theorem \ref{theorem:main:sym}}\label{section:k^2:sym}
Our starting point is similar to that of Subsection \ref{section:k^2}. We will condition on the event $\CE_{bound}$ of \eqref{eqn:norm} and the event $\CE_{incomp}$ of Lemma \ref{lemma:comp} that for all unit vectors $\Bx$ such that $\|(X-z)\Bx\|_2 \le c \sqrt{n}$, we must have $\Bx \in \Incomp(c_0,c_1)$. 

By definition, there are $k$ orthogonal unit vectors (i.e. the corresponding eigenvectors) $\Bz_1,\dots, \Bz_k$ such that 
\begin{equation}\label{eqn:x_i:sym}
\|(X-z) \Bz_i\|_2 \le \eps/\sqrt{n}, 1\le i\le k.
\end{equation}
Assume that $\Bz_i =(z_{i1},\dots, z_{in})^T$, and let $\Bc_1,\dots, \Bc_n$ be the column vectors of $M$. The condition \eqref{eqn:x_i:sym} can be read as
$$\|\sum_{j=1}^n z_{ij} \Bc_j\|_2 \le \eps/\sqrt{n}, \mbox{ for all } 1\le i\le k.$$

As we are in $\CE_{incomp}$, by Lemma \ref{lemma:Z:incomp} the set of unit vectors in the subspace $H_{\Bz_1,\dots, \Bz_k}$ spanned by $\Bz_1,\dots, \Bz_k$ belongs to $\Incomp(c_0,c_1)$.

Let $Z$ be the $k \times n$ matrix spanned by the row vectors $\Bz_1^T,\dots, \Bz_k^T$, and let $\By_1,\dots, \By_n \in \R^k$ be the columns of $Z$.  By the incompressibility property (applied for symmetric matrices), for any set $J\subset [n]$ of size at least $(1-c_0)n$, the least singular value of $Z_J$ at least $c_1$, and so Lemma \ref{lemma:invert'} applies, hence we can choose $\Theta_k(n^k)$ tuples $(j_1,\dots, j_k)\in [n]^k$ such that 
\begin{equation}\label{eqn:invert':sym}
\|Z_{(j_1,\dots, j_k)}^{-1}\|_2 =O_k(\sqrt{n}).
\end{equation}

Now assume that $Z_{(j_1,\dots, j_k)}$ is as in \eqref{eqn:invert':sym}. Let $A$ be the $k\times k$ matrix $Z_{(j_1,\dots, j_k)}^T$. Then by \eqref{eqn:invert':sym}, $\|A^{-1}\|_2 = O(\sqrt{n})$. We have
\begin{equation}\label{eqn:separation:sym}
(\Bc_{j_1},\dots, \Bc_{j_k})+ (\Bc_{j_{k+1}},\dots, \Bc_{j_n}) A' A^{-1}= B A^{-1}.
\end{equation}
Notice that 
$$\|BA^{-1}\|_{HS} \le \|A^{-1}\|_2 \|B\|_{HS} = O(\eps).$$
Let $H$ be the subspace generated by $\col_{j_{k+1}},\dots, \col_{j_n}$. Project \eqref{eqn:separation:sym} onto the orthogonal complement of $H$ and taking the squared norm, we obtain
$$\sum_{i=1}^k \dist(\col_{j_i},H)^2 \le \|BA^{-1}\|_{HS}^2.$$
It then follows that
\begin{equation}\label{eqn:small:sym}
\dist(\col_{j_1},H)=O_k(\eps) \wedge  \dots \wedge \dist(\col_{j_k},H)=O_k(\eps).
\end{equation}

Thus we have translated the event \eqref{eqn:x_i:sym} to the event of having many small distances. Now our treatment with this event $\CE_{j_1,\dots, j_k}$ will be different from the iid case as the distances are now correlated. Without loss of generality, assume that $j_i=i, 1\le i\le k$, and $H$ is the subspace generated by the last $n-k$ columns of $X$. We will remove correlations by relying on the following simple fact.

\begin{fact}\label{fact:reduction} For any $I \subset [n]$, 
$$\dist(\col_i,H) \ge \dist(\col_{i,I},H_I),$$
where $\col_{i,I}$ and $H_I$ are the projections of $\col_i$ and $H$ onto the coordinates indexed by $I$ respectively. 
\end{fact}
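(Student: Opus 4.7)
The plan is to unpack the definitions and observe that the coordinate projection $\pi_I : \R^n \to \R^{|I|}$, sending $(x_1,\dots,x_n)$ to $(x_i)_{i \in I}$, is an orthogonal projection in the ambient space and hence a $1$-Lipschitz map for the Euclidean norm. By definition $\col_{i,I} = \pi_I(\col_i)$ and $H_I = \pi_I(H)$.

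First I would write the distance variationally:
\[
\dist(\col_i, H) \;=\; \inf_{h \in H} \|\col_i - h\|_2.
\]
For any fixed $h \in H$, contractivity of $\pi_I$ yields
\[
\|\pi_I(\col_i) - \pi_I(h)\|_2 \;=\; \|\pi_I(\col_i - h)\|_2 \;\le\; \|\col_i - h\|_2.
\]
Since $\pi_I(h) \in \pi_I(H) = H_I$, the left-hand side is at least $\dist(\col_{i,I}, H_I)$. Thus
\[
\dist(\col_{i,I}, H_I) \;\le\; \|\col_i - h\|_2
\]
for every $h \in H$. Taking the infimum over $h \in H$ on the right-hand side gives the desired inequality.

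There is no real obstacle here; the only thing to double-check is that $H_I$ is honestly the right object to project onto in the reduced $|I|$-dimensional distance, which is immediate from the definition $H_I = \pi_I(H)$ in the fact's statement. The usefulness of the fact, rather than its proof, is the point: replacing $\col_i$ and $H$ by their coordinate restrictions will let us pass to an index set $I$ disjoint from $\{j_1,\dots,j_k\}$ so that $\col_{j_1,I},\dots,\col_{j_k,I}$ become independent of $H_I$, breaking the correlations that arise from the symmetry of $X$ in the subsequent argument.
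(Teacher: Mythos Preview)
Your argument is correct. The paper itself does not supply a proof of this fact; it is stated as a ``simple fact'' and used immediately, so your contractivity-of-$\pi_I$ argument is exactly the intended justification, and your remark on how the fact is used to decouple the columns in the symmetric case matches the paper's application.
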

To exploit this fact, we observe that the event $\CE_{1,\dots, k}$ implies the event $\CF_{1,\dots, k}$ where 
\begin{align}\label{eqn:sym:F}
\CF_{1,\dots, k}:&=    \Big(\dist(\col_{1, \{2,\dots, n\}}, H_{ \{2,\dots, n\}}) = O_k(\eps) \Big)  \wedge \dots  \wedge \Big(\dist(\col_{k-2, \{k-1,\dots, n\}}, H_{ \{k-1,\dots, n\}}) = O_k(\eps)\Big) \nonumber \\ 
& \wedge  \Big(\dist(\col_{k-1, \{k,\dots, n\}}, H_{ \{k,\dots, n\}}) = O_k(\eps)\Big).
\end{align}
We next use the multiplicative rule $\P(E_{k-1}\wedge \dots \wedge E_1) =\P(E_1) \P(E_2|E_1)\dots \P(E_{k-1} | E_{k-2} \wedge \dots \wedge E_1)$. Let $\mathscr{A}_{\{i+1,\dots, n\}}$ be the the sigma-algebra generated by the $X$-entries $x_{kl}, i+1\le k,l \le n$. 

For $1\le i\le k-1$, we apply Theorem \ref{theorem:dist:sym} to obtain
$$\P\Big(\dist(\col_{i, \{i+1,\dots, n\}}, H_{ \{i+1,\dots, n\}}) = O_k(\eps) | \mathscr{A}_{\{i+1,\dots, n\}}\Big) = O_k(\eps^{k-i}) + \exp(-n^{c'}).$$
Hence,
\begin{equation}\label{eqn:EF:sym}
\P(\CE_{1,\dots, k}) \le \P(\CF_{1,\dots, k}) \le O_k(\eps^{(k-1)+\dots+1} + \exp(-n^{c'})).
\end{equation}

To complete the proof of \eqref{eqn:k^2:sym} of Theorem \ref{theorem:main:sym} we then just need to use Claim \ref{claim:averaging} again, taking into account all tuples $(j_1,\dots,j_k)$ obtained from \eqref{eqn:invert':sym}.

\begin{remark} It is the process of passing from $\CE_{j_1,\dots, j_k}$ to $\CF_{j_1,\dots,j_k}$ via Fact \ref{fact:reduction} that we lost a factor of $\eps$ in each iterative conditional substep leading to \eqref{eqn:EF:sym}. To recover the cumulative loss of $\eps^k$ in the final bound one must not use Fact \ref{fact:reduction} but work directly with $\CE_{j_1,\dots,j_k}$. This requires to work with anti-concentration of quadratic forms \cite{Ng-cont, Vershynin}. A more plausible goal is to improve the RHS of Theorem \ref{theorem:V} to $O(\eps + \exp(-n^c))$.
\end{remark}

\subsection{Proof of \eqref{eqn:assym:sym} of Theorem \ref{theorem:main:sym}}\label{section:l^2:sym}
Similar to the proof of \eqref{eqn:assym} of  Theorem \ref{theorem:main} in Subsection \ref{section:l^2}, we will condition on the event $\CE_{bound}$ of \eqref{eqn:norm}, the event $\CE_{incomp}$ of Lemma \ref{lemma:comp}, and also on the event $\CE_{deloc}$ for appropriate choice of $\gamma_0$ from the non-gap delocalization result of Theorem \ref{theorem:non-gap} applied to $X$.

Next, assume as in Section \ref{section:k^2:sym} that there are $k$ orthogonal unit vectors (eigenvectors) $\Bz_1,\dots, \Bz_k$ such that 
\begin{equation}\label{eqn:x_i:sym'}
\|(X-z) \Bz_i\|_2 \le \eps/\sqrt{n}, 1\le i\le k.
\end{equation}

Let $Z$ be the $k \times n$ matrix spanned by the row vectors $\Bz_1^T,\dots, \Bz_k^T$. With the input of Theorem \ref{theorem:non-gap}, we obtain the following analog of Corollary \ref{cor:l}

\begin{corollary}\label{cor:l:sym} With $d=\lfloor C k\log k \rfloor$ and $l=(1-\gamma/2)k$, there are $2^{-d}c^d n^{l}$ $l$-tuples $(i_1,\dots, i_l) \in [n]^d$ such that the matrix $Z_{(i_1,\dots, i_l)}$ satisfies \eqref{eqn:l}.
\end{corollary}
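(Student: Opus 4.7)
The proof plan mirrors that of Corollary~\ref{cor:l} in Subsection~\ref{section:l^2}, substituting the symmetric analogs of the relevant tools. I first condition on $\CE_{bound}$, $\CE_{incomp}$, and the delocalization event $\CE_{deloc}$ coming from Theorem~\ref{theorem:non-gap}(2), each of which fails with probability at most $\exp(-n^c)$. The starting observation is that for any unit vector $\Bz = \sum_i \alpha_i \Bz_i$ in the span $H_{\Bz_1,\dots,\Bz_k}$, the triangle inequality (analogous to \eqref{newbound}) gives $\|(X-z)\Bz\|_2 \le \eps \sqrt{k}/\sqrt{n}$, which is $o(\sqrt{n})$ whenever $k < \gamma_0 n$ for sufficiently small $\gamma_0$. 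Lemma~\ref{lemma:comp} (symmetric case) then forces every such $\Bz$ into $\Incomp(c_0,c_1)$, and the symmetric analog of Lemma~\ref{lemma:Z:incomp} applies: $Z_J$ has smallest singular value at least $c_1$ for any $J \subset [n]$ with $|J| \ge (1-c_0)n$.

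Next, I repeat the two-sided pruning of Lemma~\ref{lemma:d}, calling an index $i$ bad-high if $\|\By_i\|_2 \ge c_0^{-1/2}\sqrt{k/n}$ and bad-low if $\|\By_i\|_2 \le c_1^2\sqrt{k/n}$. Since $\sum_i \|\By_i\|_2^2 = k$, the bad-high set has size $\le c_0 n$ and the bad-low set contributes at most $c_1^4 k$ to the Hilbert--Schmidt norm, so the surviving set $W$ has $|W| \ge c_0 c_1^2 n/2$ with $\sum_{i\in W}\|\By_i\|_2^2 \ge c_1^2 k/2$. Applying Theorem~\ref{theorem:non-gap}(2) with $\gamma = c_0 c_1^2/2$ to the coordinate projections of each $\Bz_j$ onto $W$ yields $\sigma_k(Z_W) \ge \gamma_3$. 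Theorem~\ref{theorem:submatrix} with $\eps' = \gamma_3^2/2$ and $d = \lfloor Ck\log k \rfloor$ then produces $c^d n^d$ ordered $d$-tuples $(j_1,\dots,j_d) \in W^d$ such that, after rescaling back from unit-normalized columns, $\sigma_k(Z_{(j_1,\dots,j_d)}) \ge (c_1^2\gamma_3/2)\sqrt{d/n}$, exactly as in Lemma~\ref{lemma:d}.

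Finally, for each such $d$-tuple I apply Theorem~\ref{theorem:submatrix'} with the choice $r = \lceil (k+l)/2 \rceil$ to extract an $l$-subset $(i_1,\dots,i_l) \subset \{j_1,\dots,j_d\}$ whose associated $Z_{(i_1,\dots,i_l)}$ satisfies \eqref{eqn:l}. Each ordered $l$-tuple arises from at most $\binom{d}{l}\, l!\, n^{d-l} \le 2^d n^{d-l}$ of the chosen $d$-tuples (bounding choices of the remaining $d-l$ entries and of the position of the $l$ selected indices), so the total number of distinct ordered $l$-tuples produced is at least $2^{-d} c^d n^l$, which is the claimed count.

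The one real subtlety, and the step I would watch most carefully, is verifying that the delocalization hypothesis $\|(X-z)\Bz\|_2 \le \gamma_2 \sqrt{n}$ of Theorem~\ref{theorem:non-gap}(2) holds \emph{uniformly} over the unit sphere of $H_{\Bz_1,\dots,\Bz_k}$, not just for the individual $\Bz_i$. This is precisely what the triangle-inequality bound $\|(X-z)\Bz\|_2 \le \eps\sqrt{k}/\sqrt{n}$ provides, and it is the one place where the restriction $k < \gamma_0 n$ is genuinely used; with this in hand every other step is a direct transcription of the iid argument using the symmetric input lemmas.
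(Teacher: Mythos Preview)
Your proof is correct and follows precisely the approach the paper intends: the paper does not give a separate argument for Corollary~\ref{cor:l:sym} but simply declares it the symmetric analog of Corollary~\ref{cor:l} with Theorem~\ref{theorem:non-gap}(2) substituted for Theorem~\ref{theorem:non-gap}(1), and you have faithfully carried out that substitution through Lemma~\ref{lemma:d} and the subsequent restricted-invertibility step. One small point: the inequality $\binom{d}{l}\,l! \le 2^d$ you invoke in the final count is not true in general, but it does hold here because $l\le k$ and $d=\lfloor Ck\log k\rfloor$ with $C$ sufficiently large (so $l\log d \le k\log(Ck\log k) \le (C\log 2)\,k\log k = d\log 2$); it would be worth stating this dependence explicitly.
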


We proceed as in Section \ref{section:l^2}. Let $A$ be the $l\times k$ matrix $Z_{\{i_1,\dots, i_l\}}^T$. Recall that the event $\CE$ in \eqref{eqn:x_i:sym'} can be written as 
$$B= (\Bc_{i_1}(X),\dots, \Bc_{i_l}(X))A + (\Bc_{i_{l+1}}(X),\dots, \Bc_{i_n}(X)) A',$$
where $A'=Z_{(i_{l+1},\dots, i_n)}^T$, and where by definition each of the $k$ column vectors of $B$ has norm at most $\eps/\sqrt{n}$. 
Let $\bar{A}$ be the $k\times l$ matrix so that $A\bar{A}=I_l$. Then by Corollary \ref{cor:l:sym}, $\|\bar{A}\|_2 \le  4 K_0\delta^{-1} \sqrt{\frac{kn}{(k-l)^2}}$. We have
\begin{equation}\label{eqn:separation':sym}
(\Bc_{i_1},\dots, \Bc_{i_l})+ (\Bc_{i_{l+1}},\dots, \Bc_{i_n}) A' \bar{A}= B \bar{A}.
\end{equation}
As such, with $H$ being the subspace generated by $\col_{i_{l+1}},\dots, \col_{i_n}$, we are in the event $\CE_{(i_1,\dots,i_l)}$ that
$$\dist(\col_{i_1},H)^2+ \dots+ \dist(\col_{i_l},H)^2 \le (\frac{4 K_0\delta^{-1}k}{k-l})^2 \eps^2 \le (8 K_0\delta^{-1}\gamma^{-1} \eps)^2.$$

By Claim \ref{claim:averaging} and by Corollary \ref{cor:l} (as bounds of type $(c/2)^d$ are absorbed by other factors), to prove \eqref{eqn:assym:sym} of Theorem \ref{theorem:main:sym} it suffices to show that there exist absolute constants $C$ and $c$ such that
\begin{equation}\label{eqn:E:sym}
\P(\CE_{(1,\dots,l)}) \le  (\frac{C\eps}{k})^{(1-\gamma)k^2/2} +  \exp(-n^c).
\end{equation}

To this end, let $\CE_t$ be the event $\dist(\col_{1},H)^2+ \dots+ \dist(\col_{l},H)^2 \le t^2$, where $t=8 \delta^{-1}\gamma^{-1} \eps$. By averaging, there are at most $\gamma l/2$ indices 
such that $\dist(\col_{i},H)^2 \ge \gamma^{-1/2}t/\sqrt{l}$. So there are at least $l'=(1-\gamma/2)l$ indices $i$ such that $\P(\dist(\col_{i},H)^2 \le \gamma^{-1/2}t/\sqrt{l})$. Again, by taking union bound over at most $2^l$ possibilities, it boils down to estimate the event $\CE_{i_1,\dots, i_{l'}}$ that
\begin{equation}\label{eqn:small:sym'}
\dist(\col_{i_1},H)=O(t/\sqrt{l}) \wedge  \dots \wedge \dist(\col_{i_{l'}},H)=O(t/\sqrt{l}).
\end{equation}
Finally, by the argument leading to \eqref{eqn:EF:sym} basing on Theorem \ref{theorem:dist:sym} 
\begin{align*}
\P(\CE_{1,\dots, k}) \le \prod_{i=1}^{l'-1}  (\frac{C t}{\sqrt{l i}} )^{i} + \exp(-n^c) &= (\frac{C' t}{\sqrt{l l'}} )^{l'(l'-1)/2} + \exp(-n^c)\\
& \le (\frac{C\eps}{k})^{(1-\gamma)k^2/2} +  \exp(-n^c),
\end{align*}

completing the proof of \eqref{eqn:E:sym}.

\begin{remark}\label{remark:sym:cont}
When $\xi$ has density function bounded by $K$, then one applies Theorem \ref{theorem:dist:cont} instead of Theorem \ref{theorem:dist:sym} to bound the events $\dist(\col_{i_j}, H)\le t$ in Subsection \ref{section:k^2:sym} and Subsection \ref{section:l^2:sym} (conditioned on any realization of $H$).  As a consequence, we obtain bounds of type $(Kt/\sqrt{l})^l$ without additional terms. Consequently, \eqref{eqn:k^2:sym} and \eqref{eqn:assym:sym} hold without the additive terms $\exp(-n^c)$ in this case.
\end{remark}

\section{Perturbed symmetric matrices: proof of Theorem \ref{theorem:main:perturb:sym}}\label{section:main:perturb:sym}

Our approach here is similar to that of Section \ref{section:main:perturb:iid} and Section \ref{section:main:sym}, so we will be brief. Let $z$ be the midpoint of $I$. Assume that $\lambda_i \in I$, and $ (X+F)\Bz_i = \lambda_i \Bz_i$ with orthogonal eigenvectors $\Bx_i$ of norm one. Then $(X+F-z)\Bz_i = (\lambda_i-z)\Bz_i$, and so, with $X':=X+F-z$
\begin{equation}\label{eqn:x_i:perturb:sym}
\|X' \Bz_i\|_2 \le \eps/\sqrt{n}, 1\le i\le k.
\end{equation}
Assume that $\Bz_i =(z_{i1},\dots, z_{in})^T$, and let $\Bc_1,\dots, \Bc_n$ be the column vectors of $X'-z$. Let $Z$ be the $k \times n$ matrix spanned by the row vectors $\Bz_1^T,\dots, \Bz_k^T$, then by \eqref{eqn:x_i:perturb:sym} we have the following analog of Lemma \ref{lemma:l:perturb}.

\begin{lemma}\label{lemma:l:perturb:sym} Let $1\le l\le k-1$. There is at least one $l$-tuple $(i_1,\dots, i_l) \in [n]^l$ such that the matrix $Z_{(i_1,\dots, i_l)}$ of size $l\times k$ satisfies 
\begin{equation}\label{eqn:l:perturb:sym}
\sigma_l^{-1} \le 4 K_0 \sqrt{\frac{kn}{(k-l)^2}},
\end{equation}
\end{lemma}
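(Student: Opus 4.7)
The plan is to mirror the proof of Lemma \ref{lemma:l:perturb} almost verbatim, since the statement is a purely linear-algebraic claim about the matrix $Z$ formed by orthonormal rows $\Bz_1^T,\dots,\Bz_k^T$. The symmetry of the underlying matrix $X$ plays no role at this step: we only use that $Z Z^T = I_k$, which follows from the orthonormality of the eigenvectors $\Bz_1,\dots,\Bz_k$ of $X+F$.

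First I would apply Theorem \ref{theorem:submatrix'} directly to $Z$, viewed as a $k\times n$ matrix of full row rank $k$ (with $d=n\ge k$). Since all $k$ singular values of $Z$ equal $1$, we have $\sum_{i=r}^k \sigma_i(Z)^2 = k-r+1$ for every $r\in\{l+1,\dots,k\}$. The theorem then produces $l$ distinct column indices $i_1,\dots,i_l \in [n]$ such that
\begin{equation*}
\sigma_l\bigl(Z_{(i_1,\dots,i_l)}\bigr)^{-1} \;\le\; K_0 \min_{r\in\{l+1,\dots,k\}} \sqrt{\frac{nr}{(r-l)(k-r+1)}}.
\end{equation*}

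Next I would optimize (or rather, make a convenient choice of) $r$. Taking $r = \lceil (k+l)/2 \rceil$ gives $r-l \ge (k-l)/2$ and $k-r+1 \ge (k-l)/2$, so $(r-l)(k-r+1) \ge (k-l)^2/4$, while $r\le k$. Substituting yields
\begin{equation*}
\sigma_l\bigl(Z_{(i_1,\dots,i_l)}\bigr)^{-1} \;\le\; 2K_0 \sqrt{\frac{nk}{(k-l)^2}} \;\le\; 4K_0 \sqrt{\frac{nk}{(k-l)^2}},
\end{equation*}
which is exactly \eqref{eqn:l:perturb:sym}.

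There is no real obstacle here: the argument is identical to the iid perturbed case in Lemma \ref{lemma:l:perturb}, and the only input used is the orthonormality of $\Bz_1,\dots,\Bz_k$. The symmetric structure of $X+F$ will only enter in the subsequent steps, when one combines \eqref{eqn:l:perturb:sym} with the distance estimates (Theorems \ref{theorem:dist:cont} and \ref{theorem:dist:discrete}) and handles the correlations between columns of $X+F-z$ via the conditional argument used around \eqref{eqn:EF:sym}.
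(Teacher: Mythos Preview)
Your proposal is correct and follows essentially the same approach as the paper: the paper simply states Lemma~\ref{lemma:l:perturb:sym} as the analog of Lemma~\ref{lemma:l:perturb} without giving a separate proof, and the proof of Lemma~\ref{lemma:l:perturb} is exactly the argument you wrote---apply Theorem~\ref{theorem:submatrix'} to $Z$ (whose singular values all equal $1$ since $ZZ^T=I_k$) and choose $r=\lceil (k+l)/2\rceil$. Your added remark that only the orthonormality of the $\Bz_i$ is used here, and that the symmetric structure enters only in the later distance/decoupling steps, is also accurate.
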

Next, as in Subsection \ref{section:l^2:sym}, let $A$ be the $l\times k$ matrix $Z_{\{i_1,\dots, i_l\}}^T$. The event $\CE$ in \eqref{eqn:x_i:perturb:sym} can be written as  $B= (\Bc_{i_1}(X'),\dots, \Bc_{i_l}(X'))A + (\Bc_{i_{l+1}}(X'),\dots, \Bc_{i_n}(X')) A'$, where $A'=Z_{(i_{l+1},\dots, i_n)}^T$, and where each of the $k$ column vectors of $B$ has norm at most $\eps/\sqrt{n}$. 
Let $\bar{A}$ be the $k\times l$ matrix so that $A\bar{A}=I_l$. Then by Lemma \ref{lemma:l:perturb:sym}, $\|\bar{A}\|_2 \le  4 K_0 \sqrt{\frac{kn}{(k-l)^2}}$ and we also have
\begin{equation}\label{eqn:separation':perturb:sym}
(\Bc_{i_1},\dots, \Bc_{i_l})+ (\Bc_{i_{l+1}},\dots, \Bc_{i_n}) A' \bar{A}= B \bar{A}.
\end{equation}
A simple calculation shows that  $\|B\bar{A}\|_{HS} \le \frac{4 K_0k}{k-l}\eps$. Thus with $H$ being the subspace generated by $\col_{i_{l+1}},\dots, \col_{i_n}$, \eqref{eqn:separation':perturb:sym} implies the event $\CE_{(i_1,\dots,i_l)}$ that
\begin{equation}\label{eqn:perturb:sym:t}\
\dist(\col_{i_1},H)^2+ \dots+ \dist(\col_{i_l},H)^2 \le t^2, \mbox{ where } t= \frac{2K_0 k \eps}{k-l}.
\end{equation}

Without loss of generality, assume $(i_1,\dots,i_l)=(1,\dots,l)$. We then pass to distances as in Subsection 5.

{\bf Proof of \eqref{eqn:k^2:perturb:sym}.}  Choose $l=k-1$. In analogy with \eqref{eqn:sym:F} we pass from $\CE_{1,\dots,l}$ to $\CF_{1,\dots, l-1}$  
\begin{align*}\label{eqn:sym:F:perturb}
\CF_{1,\dots, l-1}:&=    \Big(\dist(\col_{1, \{2,\dots, n\}}, H_{ \{2,\dots, n\}}) \le t \Big)  \wedge \dots  \wedge  \Big(\dist(\col_{l-1, \{l,\dots, n\}}, H_{ \{l,\dots, n\}}) \le t \Big).
\end{align*}
Let $\mathscr{A}_{\{i+1,\dots, n\}}$ be the the sigma-algebra generated by the $X$-entries $x_{rs}, i+1\le r,s \le n$. For $1\le i\le k-1$, we apply Theorem \ref{theorem:dist:cont}, with $p_t=\sup_x \P(|\xi-x|\le t) \le 2K_0 k \sup_x \P(|\xi-x|\le \eps)= 2K_0 k p$, we obtain 
$$\P\Big(\dist(\col_{i, \{i+1,\dots, n\}}, H_{ \{i+1,\dots, n\}}) \le t | \mathscr{A}_{\{i+1,\dots, n\}}\Big) \le (2CK_0 k p)^{l-i}.$$
Hence by multiplicative rule $\P(\CF_{1,\dots, l-1}) \le(2CK_0 k p)^{(k-1)+\dots+1}$ as desired.


{\bf Proof of \eqref{eqn:assym:perturb:sym}.} Here we use the deduction as above but with $\eps$ replaced by $k \eps$ and $t= 2K_0k^2 \eps/(k-l) $. Choose  $l=\lfloor (1-\gamma/2)k \rfloor$. From \eqref{eqn:perturb:sym:t}, by averaging there are at least $l'= \lfloor (1-\gamma/2)l \rfloor$ indices $i$ such that $\P(\dist(\col_{i},H)^2 \le \gamma^{-1/2}t/\sqrt{l})$. Thus, by taking union bound over at most $2^l$ possibilities, it boils down to estimate the event $\CE_{1,\dots,l'}$ that
\begin{equation*}\label{eqn:small:perturb:sym}
\dist(\col_{1},H)=O(t/\sqrt{l}) = O(\sqrt{k} \eps)\wedge  \dots \wedge \dist(\col_{l'},H)=O(t/\sqrt{l}) = O(\sqrt{k}\eps).
\end{equation*}
where $H$ is the subspace generated by the last $n-k$ columns of $X'$. The rest is similar to the proof of \eqref{eqn:k^2:perturb:sym} where we use Fact \ref{fact:reduction} to pass to $\CF_{1,\dots, l'-1}=   (\dist(\col_{1, \{2,\dots, n\}}, H_{ \{2,\dots, n\}}) = O(\sqrt{k} \eps))  \wedge \dots  \wedge (\dist(\col_{l'-1, \{l',\dots, n\}}, H_{ \{l',\dots, n\}}) =O(\sqrt{k} \eps)).$ We then apply Theorem \ref{theorem:dist:cont} with $p=\sup_x \P(|\xi-x|\le \eps)$ to obtain $\P(\dist(\col_{i, \{i+1,\dots, n\}}, H_{ \{i+1,\dots, n\}}) \le O(\sqrt{k} \eps)  | \mathscr{A}_{\{i+1,\dots, n\}}) \le (Cp)^{k-i} \le (Cp)^{l'-i}$, which will then yield the upper bound $\P(\CF_{1,\dots, l-1}) \le(Cp)^{l'(l'-1)/2}$.

{\bf Proof of \eqref{eqn:discrete:perturb:sym}.} Assume that $k$ is sufficiently large, we proceed as in the proof of \eqref{eqn:assym:perturb:sym} with $\eps=c_1$ where $c_1$ is sufficiently small, and with  $t= 2K_0k^2 \eps/(k-l), l=\lfloor (1-\gamma/2)k \rfloor$ and $l'= \lfloor (1-\gamma/2)l \rfloor$. After using Fact \ref{fact:reduction} to pass to  $\CF_{1,\dots, l'-1}$, we apply Theorem \ref{theorem:dist:discrete} (noting that $H_{ \{i+1,\dots, n\}}$ has co-dimension at least $k-i> \gamma k/2$) to obtain $\P(\dist(\col_{i, \{i+1,\dots, n\}}, H_{ \{i+1,\dots, n\}}) \le O(\sqrt{k} c_1)  | \mathscr{A}_{\{i+1,\dots, n\}}) \le  C e^{-c_2'c_1^2 (k-i)}$.

{\bf Acknowledgement.} The author is grateful to M.~Rudelson for suggesting Theorem \ref{theorem:main}, and for his invaluable comments and other suggestions. He is also thankful to P.~Youssef, and especially to the anonymous referee whose suggestions have help substantially improve the presentation of the note.

\appendix

\section{Proof of Theorem \ref{theorem:dist:cont}}\label{section:dist:discrete}

Recall that
$$\P(|\xi| \le K) =1.$$ 
Notice that the case $\Bu=0$ follows from Talagrand's concentration bound \cite{Talagrand} as $\|P_{H^\perp} \Bc \|_2$ is convex and 1-Lipschitz (see also \cite{TVcir}). 
\begin{theorem}\label{theorem:Tal} For any $t> 0$
$$\P(|\|P_{H^\perp} \Bc \|_2 - \sqrt{k}|\ge t \sqrt{k}) \le C \exp (-c_2 t^2 k),$$
for some constants $C,c_2$ that depend on $K$.
\end{theorem}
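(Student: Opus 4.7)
The plan is to apply Talagrand's concentration inequality for convex Lipschitz functions of independent bounded random variables, exactly as the reference to \cite{Talagrand} in the statement suggests. The function $f(\Bc) := \|P_{H^\perp}\Bc\|_2$ is the composition of the norm with a linear map, hence convex, and since orthogonal projections have operator norm one, $|f(\Bc) - f(\Bc')| \le \|P_{H^\perp}(\Bc-\Bc')\|_2 \le \|\Bc-\Bc'\|_2$, so $f$ is $1$-Lipschitz in the Euclidean metric on $\R^n$. Because the coordinates $x_i$ of $\Bc$ are independent and satisfy $|x_i| \le K$ almost surely, Talagrand's convex concentration inequality yields, for every $s>0$,
\begin{equation}\label{eqn:tal-core}
\P\bigl(|f(\Bc) - M_f| \ge s\bigr) \le 4 \exp\!\bigl(-s^{2}/(16 K^{2})\bigr),
\end{equation}
where $M_f$ denotes a median of $f$.

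Next I would replace the median $M_f$ by the deterministic quantity $\sqrt{k}$. Since $\E\|P_{H^\perp}\Bc\|_2^{2} = \tr(P_{H^\perp} \,\E \Bc \Bc^{T}\, P_{H^\perp}) = \tr(P_{H^\perp}) = k$, Jensen's inequality gives $\E f \le \sqrt{k}$. On the other hand, integrating the tail bound \eqref{eqn:tal-core} shows that $\var(f) \le C' K^{2}$ for an absolute constant $C'$, so $(\E f)^{2} \ge \E f^{2} - \var(f) = k - O(K^{2})$, and hence $0 \le \sqrt{k} - \E f = O(K^{2}/\sqrt{k})$. Combined again with \eqref{eqn:tal-core}, the median satisfies $|M_f - \E f| \le C'' K$ and therefore $|M_f - \sqrt{k}| \le C_{0} K$ for an absolute $C_0$.

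Finally, if $t\sqrt{k} \ge 2C_{0} K$, the triangle inequality yields
\begin{equation*}
\P\bigl(|f(\Bc) - \sqrt{k}| \ge t\sqrt{k}\bigr) \le \P\bigl(|f(\Bc) - M_f| \ge t\sqrt{k}/2\bigr) \le 4\exp(-t^{2} k/(64 K^{2})),
\end{equation*}
while in the complementary range $t\sqrt{k} < 2C_{0} K$ (equivalently $t^{2} k < 4 C_{0}^{2} K^{2}$) the desired bound $C\exp(-c_{2} t^{2} k)$ is made trivial by choosing $C$ large enough in terms of $K$. Taking $c_{2} = 1/(64 K^{2})$ and $C$ correspondingly large completes the argument.

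I do not anticipate a genuine obstacle; the only mild nuisance is verifying that the mean is within $O(1)$ of $\sqrt{k}$ so that the median-based Talagrand bound can be re-centered at $\sqrt{k}$, which is handled by the variance bookkeeping above. Everything else is a direct citation of Talagrand's inequality and the elementary computation $\E \|P_{H^\perp}\Bc\|_2^{2} = k$.
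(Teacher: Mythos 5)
Your proposal is correct and follows essentially the same route as the paper, which simply invokes Talagrand's convex concentration inequality for the convex $1$-Lipschitz function $\Bc\mapsto\|P_{H^\perp}\Bc\|_2$ of bounded independent coordinates (deferring details to the cited references). Your re-centering of the median at $\sqrt{k}$ via $\E\|P_{H^\perp}\Bc\|_2^2=k$ and the $O(K)$ control of $|M_f-\E f|$ is the standard bookkeeping the paper leaves implicit here and carries out analogously for the shifted case $\Bu\neq 0$ later in the appendix.
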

Now we consider two cases.

{\bf Case 1.} When $\|\Bu\|_2 \le (1-2t)\sqrt{k}$ or $\|\Bu\|_2 \ge (1+2t)\sqrt{k}$. Then if  $\|P_{H^\perp}\Bc -\Bu\|_2 \le t \sqrt{k}$, by the triangle inequality we must have either $\|P_{H^\perp}\Bc\|_2 \le (1-t) \sqrt{k}$ or  $\|P_{H^\perp}\Bc\|_2 \ge (1+t) \sqrt{k}$, and so \eqref{eqn:dist:CK} follows from Theorem \ref{theorem:Tal}.

{\bf Case 2.} When $(1-2t)\sqrt{k} \le \|\Bu\|_2 \le (1+2t)\sqrt{k}$. Consider the random variable $X = |\|P_{H^\perp} \Bc -\Bu \|_2 $. This function of $\Bc$ is again convex  (as  $\|P_{H^\perp} (x\Bc_1 + (1-x) \Bc_2) -\Bu \|_2 = \|x(P_{H^\perp} \Bc_1-\Bu) + (1-x) (P_{H^\perp} \Bc_2-\Bu) \|_2 \le x \|P_{H^\perp} \Bc_1-\Bu \|_2 + (1-x)  \|(P_{H^\perp} \Bc_2-\Bu \|_2$) and $1$-Lipschitz, by Talagrand's concentration result, with $M=M(X)$ being the median of $X$
\begin{equation}\label{eqn:XM}
\P(|X-M| \ge t \sqrt{k}) \le C\exp(-c_2 t^2k),
\end{equation}
for some constants $C, c_2$ that depend on $K$.

We need to specify the median of $X$.  Notice that 
$$X^2 =  \Bc^T P_{H^\perp} \Bc - 2 \Bc^T  P_{H^\perp} \Bu + \|\Bu\|_2^2.$$
So $\E X^2 =  k + \|\Bu\|_2^2$. We write
$$Y:=X^2-  \E X^2= X^2 - ( k + \|\Bu\|_2^2 ) = \sum_{i,j} p_{ij} (x_i x_j -\delta_{ij}) - 2 (\sum_i x_i v_i),$$
where $P_{H^\perp} = (p_{ij})$ and $\Bv=(v_1,\dots, v_n) =  P_{H^\perp} \Bu$. 

After squaring out and as $x_i$ are iid and $\E \xi=0, \E \xi^2=1$ as well as $|\xi|\le K$, we have
\begin{align*}
\E Y^2&=  \E (\sum_{i,j} p_{ij} (x_i x_j -\delta_{ij}))^2  + 4 \E (\sum_i x_i v_i)^2  -4 \E \sum_i v_i p_{ii} (x_i^3-x_i)\\
&= O(\sum_{i,j} p_{ij}^2 K^2) +O(\sum_i v_i^2) + O(K \sqrt{(\sum_i v_i^2)(\sum_i p_{ii}^2)} )\\
&= O(K^2 k),
\end{align*}
where we used the fact that $\sum_i v_i^2 \le \|\Bu\|_2^2 \le (1+2t)^2k$ and $\sum_{ij} p_{ij}^2 =\sum_i p_{ii} =k$.

Thus by Markov's inequality the median of $|Y|$ is $O(K \sqrt{k})$, and so the median of $X^2$ is at least $( k + \|\Bu\|_2^2 ) - O(K\sqrt{k})$. Consequently, the median $M(X)$ of $X$ is at least $\sqrt{ k + \|\Bu\|_2^2 } - O(K)$. Substitute into \eqref{eqn:XM}, with sufficiently small $t$, we obtain
$$\P\Big(X\le  t \sqrt{k} -O(K)\Big) \le \P\Big(X\le \sqrt{ k + \|\Bu\|_2^2 } - O(K) - t \sqrt{k}\Big) \le \P\Big(|X-M| \ge t \sqrt{k}\Big) \le C\exp(-c_2 t^2k).$$


\begin{thebibliography}{99}


\bibitem{A} M.~Aizenman, R.~Peled, J.~Schenker, M.~Shamis, S. Sodin, {\it Matrix regularizing effects of Gaussian perturbations
}, Commun. Contemp. Math. 19 (2017), no. 3, 1750028, 22 pp.

\vskip .05in

\bibitem{BEYY}  P.~Bourgade, L.~Erd\H{o}s, H.-T.~Yau and J.~Yin, {\it Fixed energy universality for generalized Wigner matrices},  Communications on Pure and Applied Mathematics, vol. 69, no. 10, 1815-1881, 2016.
\vskip .05in

\bibitem{Bourgain}J.~Bourgain,  {\it On a Problem of Farrell and Vershynin in Random Matrix Theory}, Geometric Aspects of Functional Analysis pp 65-69, Lecture Notes in Mathematics book series (LNM, volume 2169).
\vskip .05in

\bibitem{CMS} C.~Cacciapuoti, A.~Maltsev, B.~Schlein,  {\it Local Marchenko-Pastur law at the hard edge of sample covariance matrices}, J. Math. Phys. 54 (2013), no. 4, 043302, 13 pp.
\vskip .05in

\bibitem{C} M. Carmeli, {\it Statistical theory  and random matrices},  1983, Dekker, New York.
\vskip .05in

\bibitem{CGK} J.~-M.~Combes, F.~Germinet and A.~Klein, {\it Generalized eigenvalue-counting estimates for the Anderson model}, J. Stat. Phys. 135 (2009), no. 2, 201-216.
\vskip .05in

\bibitem{Edelman}  A.~Edelman,  \emph{Eigenvalues and condition numbers of random matrices},
SIAM J. Matrix Anal. Appl.  9 (1988), no. 4, 543-560.
\vskip .05in

\bibitem{ErdSchYau2010}
L.~Erd\H{o}s, B.~Schlein and H.-T.~Yau, {\it Wegner estimate and level repulsion for Wigner random matrices}, International Mathematics Research notices, 2010, no. 3, 436-479.
\vskip .05in

\bibitem{FV} B. ~Farrell and R.~Vershynin, {\it Smoothed analysis of symmetric random matrices with continuous distributions}, Proceedings of the AMS 144 (2016), 2257-2261.


\vskip .05in
\bibitem{GN}  H.~Goldstine and J.~von Neumann, \emph{Numerical inverting of matrices of high order}, Bull. Amer. Math. Soc. 53 (1947), 1021-1099.
\vskip .05in

\bibitem{MS} A.Maltsev and B. Schlein, {\it Average density of states of Hermitian Wigner matrices},
 Advances in Mathematics, 228 (2011), no. 5, 2797-2836. 


\bibitem{NY} A.~Naor and P~Youssef, {\it Restricted invertibility revisited}, to appear in Journey Through Discrete Mathematics. A Tribute to Jiri Matousek.
\vskip .05in
\bibitem{Ng-cont} H.~Nguyen, {\it On the least singular value of random symmetric matrices}, Electronic Journal of Probability 17 (2012), no. 53. 
\vskip .05in
\bibitem{NgTV} H.~Nguyen, T.~Tao and V.~Vu, {\it Random matrices: tail bounds for gaps between eigenvalues}, Probability Theory and Related Fields, (2017), Vol. 167, 3, 777-816. 
\vskip .05in
 \bibitem{Ng-dist}  H.~Nguyen, {\it Concentration of distances in Wigner matrices}, Linear Algebra and its Applications 536 (2018), 52-86. 
\vskip .05in
\bibitem{R} M.~Rudelson, {\it Delocalization of eigenvectors of random matrices},
PCMI 2017 lecture notes, \url{arxiv.org/pdf/1707.08461.pdf}.
\vskip .05in

\bibitem{RV-submatrix}  M.~Rudelson and R.~Vershynin, {\it Sampling from large matrices: an approach through geometric functional analysis}, Journal of the ACM (2007), Art. 21, 19 pp. 
\vskip .05in

\bibitem{RV} M.~Rudelson and R.~Vershynin, {\it The Littlewood-Offord Problem and invertibility of random matrices}, Advances in Mathematics 218 (2008), 600-633.
\vskip .05in

\bibitem{RV-rec}  M.~Rudelson and R.~Vershynin, {\it Smallest singular value of a random rectangular matrix}, Communications on Pure and Applied Mathematics 62 (2009), 1707-1739.
\vskip .05in

\bibitem{RV-cont}M.~Rudelson and R.~Vershynin, {\it Small ball probabilities for linear images of high dimensional distributions}, International Mathematics Research Notices 19 (2015), 9594-9617.
\vskip .05in

\bibitem{RV-deloc} M.~Rudelson and R.~Vershynin, {\it No-gaps delocalization for general random matrices}, Geom. and Funct. Analysis.  26 (2016), 1716-1776.
\vskip .05in

\bibitem{ST}  D.~A.~Spielman and S.~H.~Teng, \emph{Smoothed analysis of algorithms}, Proceedings of the International Congress of
Mathematicians, Vol. I, 597-606, Higher Ed. Press, Beijing, 2002.
\vskip .05in 

\bibitem{SST} A.~Sankar, D.~Spielman, and S.-H.~Teng, {\it Smoothed analysis of the condition numbers and growth factors of matrices}, SIAM J. Matrix Anal. Appl. 28 (2006), 446-476.
\vskip .05in



\bibitem{Szarek} S.~Szarek, {\it Condition  numbers of  random  matrices}, J. Complexity 7 (1991), no. 2, 131-149.
\vskip .05in

\bibitem{Talagrand} M.~Talagrand, A new look at independence, Annals of Probability 24, 1-34 (1996).
\vskip .05in

\bibitem{TVsmooth} T.~Tao and V.~Vu, {\it Smooth analysis of the condition number and the least singular value}, Mathematics of Computation, 79 (2010), 2333-2352.
\vskip .05in


\bibitem{TVleast} T.~Tao and V.~Vu,  {\it Random matrices: the distribution of the smallest singular values}, Geometric and Functional Analysis, 20 (2010), no. 1, 260-297.
\vskip .05in



\bibitem{TVinverse} T.~Tao and V.~Vu, {\it Inverse Littlewood-Offord theorems and the condition number of random matrices}, Annals of Mathematics  (2), 169 (2009), no 2, 595-632.
\vskip .05in

\bibitem{TVcir} T.~Tao, V.~Vu and appendix by M.~Krishnapur, Random matrices:  universality of ESDs and the circular
law, Annals of Probability 38 (2010), no. 5, 2023-2065.
\vskip .05in


\bibitem{TVsimple} T.~Tao and V.~Vu, {\it Random matrices have simple spectrum},  Combinatorica 37 (2017), no. 3, 539-553.
\vskip .05in

\bibitem{Vershynin}
R.~Vershynin, {\it Invertibility of symmetric random matrices}, Random Structures \& Algorithms, 44 (2014), no. 2, 135-182.
\vskip .05in


\bibitem{W} S.~Wilks, {\it Mathematical Statistics}, 1963, Wiley, New York.



\end{thebibliography}
\end{document}